\newtheorem{theorem}{Theorem}[section]
\newtheorem{lemma}[theorem]{Lemma}
\newtheorem{proposition}[theorem]{Proposition}
\newtheorem{corollary}[theorem]{Corollary}
\theoremstyle{definition}
\theoremstyle{remark}
\newtheorem{remark}[theorem]{Remark}
\numberwithin{equation}{section}
\newcommand{\R}{{\mathbb R}}
\newcommand{\Z}{{\mathbb Z}}
\newcommand{\G}{\frac{\alpha}{\Gamma(1-\alpha)}}
\newcommand{\e}{\epsilon}
\newcommand{\D}{\mathcal{D}}
\newcommand{\K}{\mathcal{K}}
\newcommand{\M}{\mathcal{M}}
\author{Mark Allen}
\address{Department of Mathematics, Brigham Young University, Provo,
  UT 84602}
\email{allen@mathematics.byu.edu}
\subjclass[2010]{35K55,35R09,35R11,45J05,45K05}
\thanks{M.~Allen was supported by NSF grant DMS-1303632}
\title{H\"older regularity for nondivergence nonlocal parabolic equations}
\begin{document}

\maketitle
\makeatletter
\vspace{-2em}
{\centering\enddoc@text}
\let\enddoc@text\empty 
\makeatother

\begin{abstract}
 This paper proves H\"older continuity of viscosity solutions to certain nonlocal parabolic equations 
 that involve a generalized fractional time derivative of Marchaud or Caputo type. As a necessary and preliminary result, this paper first 
 proves H\"older continuity for viscosity solutions to certain nonlinear ordinary differential equations involving the generalized fractional time derivative.
\end{abstract}

\section{Introduction}

 This paper studies nonlocal parabolic equations of nondivergence type involving a generalized fractional time derivative.  Specifically, we study   
  \begin{equation}   \label{e:main}
   \begin{aligned}
      f(x,t) &= \D_t^{\alpha} u(x,t) - \mathcal{I} u(x,t)  \\
           &:=  \int_{-\infty}^t [u(x,t)-u(x,s)]\mathcal{K}(t,s) \ ds \\
           & \quad - \sup_{i}\inf_{j}\left( \int_{\R^n}[u(x+y,t)+ u(x-y,t)-2u(x,t)]K^{ij}(x,y,t) \ dy\right).
    \end{aligned}
  \end{equation}
 The main result is that viscosity solutions are H\"older continuous.  
 Before stating the exact assumptions made for the kernels $\K$ and $K^{ij}$,
  we first describe the recent history of showing H\"older continuity for parabolic equations of nondivergence type
 as well as the motivation for studying a parabolic equation involving a generalized fractional time derivative. 
 
  For local linear parabolic equations of nondivergence type, Krylov and Safonov \cite{ks80} proved 
 H\"older continuity of solutions without requiring any regularity assumptions on the coefficients of the equation. This method was later
 adapted to study regularity properties for fully nonlinear elliptic equations, see \cite{cc95}. Wang \cite{w92} adapted the 
 methods for nonlinear elliptic equations to prove the continuity of solutions
 to fully nonlinear parabolic equations. In recent years nonlocal equations have attracted interest due to the applications in the physical sciences
 \cite{mk00,z02}. Of particular interest are nonlocal elliptic operators of integro-differential type
  \begin{equation}  \label{e:ellnon}
   \mathcal{L} u(x) = \int_{\R^n} [u(x)-u(y)]K(x,y) \ dy.
  \end{equation}
 When $K(x,y)=c_{n,\sigma}|x-y|^{-n-2\sigma}$, then $\mathcal{L}$ is simply the fractional Laplacian $(-\Delta)^{\sigma}$. 
 An assumption such as $K(x,y)=K(x,-y)$ will make $\mathcal{L}$ a nondivergence type operator while an assumption such 
 as $K(x,y)=K(y,x)$ will give an operator of divergence type. If $\mathcal{L}$ is of nondivergence type, then $\mathcal{L}$
 is the nonlocal analogue of the linear elliptic operator 
  \[
   Lu = a_{ij} u_i u_j.
  \]
 A typical ellipticity assumption on $K(x,y)$ is that $\lambda |x-y|^{-n-2\sigma} \leq K(x,y) \leq \Lambda|x-y|^{-n-2\sigma}$, and 
 is analogous to the assumption for local equations that $\lambda |\xi|^2 \leq a_{ij}\xi_i\xi_j \leq \Lambda |\xi|^2$. Although regularity estimates
 were known for operators of type \eqref{e:ellnon}, these estimates were not uniform as the order $\sigma$ of the operator went to $1$. 
 Caffarelli and Silvestere \cite{cs09}  adapted the techniques for fully nonlinear local equations \cite{cc95} to the nonlocal setting, and obtained uniform 
 estimates as the order of the operator $\sigma$ went to $1$. Chang Lara and Davila  \cite{cl141} then proved regularity estimates for parabolic 
 equations of the type
  \[
   \partial_t u(x,t) - \mathcal{I}u(x,t) = 0,
  \]
 where $\mathcal{I}$ is a nonlocal nonlinear elliptic operator. The estimates in \cite{cl141} are uniform as the order of the operator $\sigma$ approaches 
 $1$, so that the results in \cite{cl141} are not only a nonlocal analogue, 
 but also recover many of the regularity results for local parabolic equations. Recently, Schwab and Silvestre in \cite{ss16}
 have proven H\"older regularity for parabolic equations with
 even more general nonlocal elliptic terms.

 The equation of interest in this paper is motivated by the equation
  \begin{equation}   \label{e:plasma}
   \partial_t^{\alpha} u - D_{|x|}^{\beta} u = f,  
  \end{equation}
 introduced in \cite{dcl04,dcl05} to model plasma transport. The function $u$ is the probability density function for tracer particles in the plasma which 
 represents the probability of finding a particle at time $t$ and position $x$. The right hand side $f$ is a source term. 
 The nonlocal diffusion operator $D_{|x|}^{\beta}$ is one-dimensional and 
 accounts for avalanche-like transport. The fractional derivative $\partial_t^{\alpha}$ is the Caputo derivative, and in the context of this model the Caputo 
 derivative accounts for the trapping of the trace particles in turbulent eddies. Although \eqref{e:plasma} is an equation for one spatial dimension, we 
 may consider the following nonlocal parabolic equation in higher dimensions
  \begin{equation}    \label{e:firstdive}
   \partial_t^{\alpha}u  - \mathcal{L}u = f
  \end{equation}  
 where $\mathcal{L}$ is nonlocal elliptic operator of type \eqref{e:ellnon}, and $\partial_t^{\alpha}$ is the Caputo derivative. 
 As mentioned earlier, certain assumptions made on the kernel $K(x,y)$ lead to an operator of either divergence or nondivergence type. 
 The author in \cite{a17} studied weak solutions of \eqref{e:firstdive} of nondivergence type. 
 The assumptions made on the kernel $K(x,y,t):\R^n \times \R^n \times \R \to \R$ were  that 
  \begin{equation}   \label{e:elliplamb}
   \frac{\lambda}{|x-y|^{n+\sigma}}\leq   K(x,y,t) \leq \frac{\Lambda}{|x-y|^{n+\sigma}},  
  \end{equation} 
  for fixed $0<\lambda \leq \Lambda$, and that $K(x,y,t)=K(x,-y,t)$. 
 For divergence form equations, the author in 
 \cite{acv16} studied weak solutions of \eqref{e:firstdive} of divergence type and proved a De Giorgi-Nash-Moser type theorem which gives 
 H\"older continuity of solutions.  The results in \cite{acv16} assumed \eqref{e:elliplamb} and $K(x,y,t)=K(y,x,t)$. 
 The latter assumption leads to \eqref{e:firstdive} being an equation of divergence type. It was shown in 
 \cite{acv17} that the methods utilized in 
 \cite{acv16} can be applied to an equation of type \eqref{e:firstdive} with a much more general fractional time derivative. 
 The main aim of this paper is to extend the results for nondivergence form equations 
 in \cite{a17} to equations involving this more general fractional time derivative
  which we now describe.

 \subsection{The Marchaud and Caputo Derivatives} 
  The Caputo derivative is useful in modeling phenomena that take into account interactions in the past. In this manner one can think of 
  the equation has having ``memory''. This is in contrast to parabolic equations such as the heat operator $\partial_t - \Delta$
  that do not account for the past. The Caputo derivative 
  is defined as 
   \[
    _a\partial_t^{\alpha} f(t) := \frac{1}{\Gamma(1-\alpha)} \int_{a}^t \frac{f'(s)}{(t-s)^{\alpha}} \ ds.
   \]
  For $C^1$ functions one may use integration by parts to show the equivalent formula
   \[
    _a\partial_t^{\alpha} f(t) = \frac{1}{\Gamma(1-\alpha)} \frac{f(t)-f(a)}{(t-a)^{\alpha}} + \G \int_a^t \frac{u(t)-u(s)}{(t-s)^{1+\alpha}} \ ds.
   \]
  If we define $f(t)=f(a)$ for $t<a$ then as in \cite{acv16} we have the equivalent formulation 
   \begin{equation}  \label{e:fracform}
    \partial_t^{\alpha} f(t) := \G \int_{-\infty}^t \frac{f(t)-f(s)}{(t-s)^{1+\alpha}} \ ds.
   \end{equation}
  This one-sided nonlocal derivative is known as the Marchaud derivative \cite{skm93}, and was recently studied 
   in \cite{bmst16}. The formulation in \eqref{e:fracform} is very useful. It is no longer essential to know, and therefore label, the initial point $a$. 
  Another useful feature of this formulation in \eqref{e:fracform}
  is rather than  assigning initial data as simply $u(a)=c$,  one may   
  assign more general ``initial'' data  as $u(t)=\phi(t)$ for $t\leq a$ with $\phi(x)$  not necessarily differentiable or even continuous. 
  The formulation in \eqref{e:fracform} will be particularly useful for the notion of 
  viscosity solutions in the context of nondivergence solutions later described in Section \ref{s:viscosity}. 
  The formulation in \eqref{e:fracform} looks similar to the one-dimensional fractional Laplacian 
  except that the integration occurs from only one side. We may then apply many of the techniques developed for nonlocal elliptic 
  operators like the fractional Laplacian to equations involving the Caputo derivative as was done in \cite{acv16,acv17}. 
  Finally, this formulation in \eqref{e:fracform} allows for a different type of generalization of the Caputo derivative. Rather than generalizing as
   \[
    \frac{d}{dt} (k \ast (f-f(a))) \quad \text{ or } \quad \int_a^t f'(s)K(t-s) \ ds,
   \]
   one may generalize as 
   \begin{equation}  \label{e:maintime}
    \D_t^{\alpha}f(t):= \int_{-\infty}^t [f(t)-f(s)]\K(t,s) \ ds. 
   \end{equation}
  The proof of H\"older continuity in \cite{acv16} for the linear divergence equation works for the more general fractional time derivative 
   \[
    \int_{-\infty}^t [f(t)-f(s)]\K(t,s,x) \ ds,
   \]
  provided that the kernel $\K(t,s,x): \R \times \R \times \R^n \to \R$ satisfies 
   \begin{equation}  \label{e:timediv}
    \K(t,t-s,x) = \K(t+s,t,x)
   \end{equation} 
     and
   \begin{equation}  \label{e:alphabound}
    \frac{\alpha}{\Gamma(1-\alpha)}\frac{\lambda}{(t-s)^{1+\alpha}} 
     \leq \K(t,s,x)
     \leq \frac{\alpha}{\Gamma(1-\alpha)} \frac{\Lambda}{(t-s)^{1+\alpha}}.
   \end{equation}
  See for instance \cite{acv17} where a kernel $\K(t,s,x)$ satisfying only \eqref{e:timediv} and \eqref{e:alphabound} is utilized. 
   The assumption \eqref{e:alphabound} is analogous to the assumption 
  \eqref{e:elliplamb} for the kernel $K(x,y,t)$ of $\mathcal{L}$. 
  Likewise, the condition \eqref{e:timediv} is analogous to $K(x,y,t)=K(y,x,t)$ and was 
  necessary in \cite{acv17}
  because the equation was of divergence form. In this paper we will assume that the time kernel $\K(t,s): \R \times \R \to \R$ is independent of the 
  spatial variable $x$. 

 \subsection{Main Results}
   
  We will assume that the kernels $K^{ij}$ satisfy \eqref{e:elliplamb} and $K^{ij}(x,y,t)=K^{ij}(x,-y,t)$.  
  We will also assume that the kernel $\K(t,s)$ satisfies \eqref{e:alphabound}.

  In order to prove H\"older continuity of solutions to nondivergence parabolic equations involving the Caputo derivative, 
  the author in \cite{a17} followed the idea in \cite{s11} to solve an ordinary differential equation in order to capture information backwards in 
  time for a solution. 
   In the context of solutions to \eqref{e:main} this requires solving 
   \begin{equation}    \label{e:odeode}
    \begin{cases}
     m(t)= 0 &\text{ if } t\leq -2   \\
     \D_t^{\alpha} m(t)= c_0|\{x \in B_1 : u(x,t)\leq 0\}| - C_1m(t) &\text{ if } t \in (-2,0).
    \end{cases}
   \end{equation}
  Several complications arise when considering solutions of \eqref{e:odeode}. First, we need to prove existence of solutions. Second, the right hand side is not 
  necessarily continuous. Third, a solution of \eqref{e:odeode} may not be regular enough to utilize as a part of a test function for the notion 
  of viscosity solution. Fourth, we need to show roughly that if $|\{x \in B_1\times (-2,-1): u(x,t)\leq 0\}|\geq \mu_1$, then 
  $m(t)\geq \mu_2$ if $t \in (-1,0)$. 
  When $\D_t^{\alpha}=\partial_t^{\alpha}$ the author in \cite{a17} utilized explicit representation formulas \cite{d04} for solutions to \eqref{e:odeode} to 
   obtain the necessary 
  properties.
  Since no such formulas are available for solutions to \eqref{e:odeode} in general, we overcome the four mentioned complications in a 
  different way.  We first show a priori H\"older continuity estimates for such ordinary differential equations. Our class of weak solutions will 
  be considered in the viscosity sense as described in Section \ref{s:viscosity}. Our first main result is

 \begin{theorem}  \label{t:main}
   Let $f$ be a continuous function on $[-1,0]$, and let $u$ be a bounded and a viscosity solution to 
    \[
     \sup_{k} \inf_{l} \left(\int_{-\infty}^t [u(x,t)-u(x,s)]\K^{kl}(t,s) \ ds  \right) = f(t)
    \]
   on  $(-1,0]$ 
   with $f \in L^{\infty}$. Assume also that the kernels $\K^{kl}(t,s)$ satisfy \eqref{e:alphabound}. 
   Then  if
   $0< \alpha \leq 1$ there exists two constants $0<\beta\leq 1$ and $C>0$ depending on $ \alpha,\lambda,\Lambda$ and 
   $\displaystyle \| u \|_{L^{\infty}((-\infty,-1))}$
   but uniform as $\alpha \to 1$ such that
    \begin{equation}   \label{e:fholder}
     \| u \|_{C^{0,\beta}([-1/2,0])} \leq C \| f \|_{L^{\infty}}. 
    \end{equation}
  \end{theorem}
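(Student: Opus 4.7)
The plan is to establish Theorem~\ref{t:main} by combining an extremal-operator reduction with an iterated diminish-of-oscillation argument, in the spirit of Krylov--Safonov and its nonlocal adaptations \cite{cs09,cl141}. Since each $\K^{kl}$ satisfies \eqref{e:alphabound}, the operator $\sup_k\inf_l \int_{-\infty}^t [u(t)-u(s)]\K^{kl}(t,s)\,ds$ is pointwise sandwiched between two one-sided extremal Pucci-type operators $\M^{\pm}$ built from the Marchaud kernel with constants $\lambda,\Lambda$. Hence any viscosity solution of the equation in the statement is simultaneously a viscosity subsolution of $\M^+u = -\|f\|_{L^\infty}$ and a supersolution of $\M^-u = \|f\|_{L^\infty}$, and it suffices to prove a uniform H\"older bound for sub/supersolutions of these linear extremal inequalities.

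The core step is a diminish-of-oscillation lemma: there exist $\theta\in(0,1)$, $\rho\in(0,1)$, and $\epsilon_0>0$, depending on $\alpha,\lambda,\Lambda$ but stable as $\alpha\to 1$, such that whenever $-1\leq u\leq 1$ on $\R$ with $\|f\|_{L^\infty}\leq\epsilon_0$, one has $\mathrm{osc}_{[-\rho,0]} u\leq 2\theta$. After possibly replacing $u$ by $-u$, one may assume the past set $E := \{t\in(-1,-\rho):u(t)\leq 0\}$ has measure at least $(1-\rho)/2$. The aim is then to construct an explicit $\phi$ satisfying $\M^+\phi\leq -\epsilon_0$ on $[-\rho,0]$ that dominates $u$ on $(-\infty,-\rho]$ (for instance taking $\phi \equiv 1$ on $(-\infty,-\rho]\setminus E$ and $\phi\equiv 0$ on $E$, which is admissible since $u\leq 1$ everywhere and $u\leq 0$ on $E$) and whose values on $[-\rho,0]$ are strictly below $1-\theta$. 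Comparison for $\M^+$ then forces $u\leq\phi<1-\theta$ on $[-\rho,0]$; the propagation of the past zero-set $E$ into the forward interval is precisely the mechanism by which the one-sided Marchaud-type kernel carries information forward in time.

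Granted the lemma, the standard iteration --- rescaling $u_k(t) = \theta^{-k}u(\rho^k t)$ after centering, noting that the Marchaud kernel and the constant $\G$ scale correctly under $t\mapsto\rho t$, and applying the lemma at each scale --- yields $\mathrm{osc}_{[-\rho^k,0]} u\leq 2\theta^k$, i.e.\ H\"older continuity at $t=0$ with exponent $\beta = \log(1/\theta)/\log(1/\rho)$. Translating in time and applying the same estimate at each $t_0\in[-1/2,0]$, while controlling the tail integrals by the hypothesized bound $\|u\|_{L^\infty((-\infty,-1))}$, gives \eqref{e:fholder}.

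The main obstacle lies in the barrier construction together with the uniformity in $\alpha$. First, the one-sided causal structure forbids radial barriers, forcing the use of monotone power functions such as $(\delta-t)^{-\gamma}$ tailored to the Marchaud kernel, together with a comparison principle that must be proved directly for $\M^+$ in the viscosity class considered in Section~\ref{s:viscosity}. Second, as $\alpha\nearrow 1$ the prefactor $\G$ tends to $0$, so the contribution of the faraway set $E$ to $\M^+\phi$ becomes negligible, and the decay of oscillation must instead be extracted from the near-diagonal singularity of the kernel; obtaining $\theta$ and $\rho$ bounded away from $1$ and $0$ uniformly as $\alpha\to 1$ requires a careful cancellation of the $\G$ factor against the integral of $(t-s)^{-1-\alpha}$ near $s=t$, paralleling the analysis used by Caffarelli--Silvestre in the nonlocal elliptic case.
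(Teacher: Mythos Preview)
Your proposal takes a genuinely different route from the paper. The paper does \emph{not} argue by barrier and comparison for the ODE; instead it adapts the Caffarelli--Silvestre scheme to the one-sided operator. It introduces the running-maximum envelope $\M_u(t)=\sup_{s\le t}u(s)$ in place of a concave envelope, proves a dyadic ring lemma (at contact points $\M_u(t_0)=u(t_0)$ there is a scale $r_k$ on which $u$ cannot drop much in measure), and from this obtains an ABP-type inequality $\sup u^+\le C\sum_j \max_{I_j} f^+|I_j|$ together with a density estimate on doubled intervals. A covering lemma then yields the ``point-to-measure'' estimate (Lemma~\ref{l:krylov}), which iterates to a weak-$L^{\e}$ bound and finally to the oscillation decay of Lemma~\ref{l:main}. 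The uniformity in $\alpha$ comes from the explicit cancellation between the prefactor $\alpha/\Gamma(1-\alpha)$ and the dyadic sum $\sum_k r_k^{1-\alpha}$ in Lemma~\ref{l:meas}; no barrier is needed.

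Your outline has a real gap precisely where you flag it. With $\phi=0$ on $E$, $\phi=1$ on $(-\infty,-\rho]\setminus E$, and $\phi$ constant on $[-\rho,0]$, the quantity controlling the oscillation is $\G\int_E (t-s)^{-1-\alpha}\,ds$, and since $E\subset(-1,-\rho]$ is bounded away from $t$, this integral stays bounded while $\G\to 0$ as $\alpha\to 1$. Thus the barrier degenerates and gives no uniform $\theta$. You note that one must then use the near-diagonal singularity, but the set $E$ lies in the \emph{past} interval $(-1,-\rho]$, so it contributes nothing near the diagonal; extracting oscillation decay from the singular part requires information on $u$ in $[-\rho,0]$ itself, which is exactly what the ABP/contact-set machinery of Section~\ref{s:holder} supplies and which a static barrier cannot. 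In short, the comparison idea you sketch is essentially the mechanism the paper uses later, in Lemma~\ref{l:decrease}, for the \emph{space--time} equation---but there the barrier is $1-g(t)b(x)$ with $g$ solving an auxiliary fractional ODE, and the needed properties of $g$ (Lemmas~\ref{l:zorn} and \ref{l:mubound}) already rely on Theorem~\ref{t:main}. For the ODE itself, one must argue differently, and the paper's envelope/ABP route is what makes the constants uniform.

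A secondary issue: the comparison principle you invoke for $M_\alpha^+$ is not established in the paper (Theorem~\ref{t:comparison} is only for a single $\D_t^\alpha$), and your barrier is discontinuous on $(-\infty,-\rho]$ with data depending on the unknown set $E$; both points would require additional work before the argument could close.
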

  
 We remark that the operator in Theorem \ref{t:main} appears at first glance to be a nonlocal elliptic operator in one dimension, and
 results regarding H\"older regularity for general nonlocal elliptic operators are contained in both \cite{krs14} and \cite{ss16}. 
 However, since the integration only occurs from the left, Theorem \ref{t:main} is not contained in the results of \cite{krs14} and \cite{ss16}. 
 Specifically, $\K(t,s)$ does not belong to $A_{\text{sec}}$ in \cite{krs14}, and assumption $(A3)$ is violated in \cite{ss16}. 
 
 We utilize Theorem \ref{t:main} to prove existence of solutions to \eqref{e:odeode}. To accommodate the second and third complications
 we show in Section \ref{s:barrier} that we may approximate \eqref{e:odeode} uniformly from below by Lipschitz subsolutions to \eqref{e:odeode}
 which will be sufficient for the purposes of this paper. The fourth complication, however, remains. 
 The H\"older estimate in \eqref{e:fholder} depending on an $L^{\infty}$ norm of $f$ is insufficient to overcome the fourth complication. 
 We therefore consider a subclass of kernels $\K(t,s)$ which we assume 
 also satisfy \eqref{e:timediv}. This last assumption allows us to prove our second main result. 
 
  \begin{theorem}   \label{t:main2}
  Let $u$ be a bounded, continuous viscosity solution to \eqref{e:main} in $B_2 \times [-2,0]$.  
  Assume the kernels $K^{ij}(x,y,t)$ satisfy $K^{ij}(x,y,t)=K^{ij}(x,-y,t)$ and \eqref{e:elliplamb}. Assume the kernel $\K(t,s)$
  satisfies \eqref{e:timediv} and \eqref{e:alphabound}. 
  Then there exists three constants   $C,\kappa,\e_0>0$ depending only on $\Lambda,\lambda, n,\alpha,\sigma$, but uniform as $\alpha \to 1$
  such that $u$ is H\"older continuous in $B_1 \times [-1,0]$, and for $(x,t),(y,s) \in B_1 \times [-1,0]$ the following estimates holds 
   \[
    |u(x,t)-u(y,s)| \leq C (\| u\|_{L^{\infty}} + \epsilon_0^{-1} \| f\|_{L^{\infty}})|x-y|^{\kappa}+|t-s|^{\kappa \alpha/(2\sigma)}.
   \] 
 \end{theorem}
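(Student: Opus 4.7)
My plan is to follow the Krylov--Safonov / Caffarelli--Silvestre / Wang program adapted to the generalized fractional time derivative. The centerpiece is a diminish-of-oscillation lemma: if $|u| \leq 1$ on $B_2 \times [-2,0]$ and $\|f\|_{L^\infty}$ is sufficiently small, then $\operatorname{osc}_{Q_\rho} u \leq (1-\theta)\operatorname{osc}_{Q_1} u$ for an appropriate parabolic cylinder $Q_\rho$, with $\rho$ chosen to reflect the correct parabolic scaling of the equation, namely $x \sim r$ and $t \sim r^{2\sigma/\alpha}$. Iterating this at dyadic scales, with the standard tail-control argument to handle the non-compact support of the kernels, produces the Hölder estimate with space exponent $\kappa$ and time exponent $\kappa\alpha/(2\sigma)$ stated in the theorem.

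The oscillation decay reduces to a point estimate in the spirit of the nonlocal $L^\e$ lemma. Assuming $u \geq 0$ on $B_2 \times [-2,0]$ with $\{u \leq 1\}$ occupying a large portion of $B_1 \times (-2,-1)$, the goal is to place a subsolution barrier below $u$ that forces $u(\cdot,0) \geq \mu$ on $B_{1/2}$. The spatial part of the barrier (a bump decaying away from $B_1$) is constructed exactly as in \cite{cs09,cl141} using the ellipticity \eqref{e:elliplamb} and the symmetry $K_{ij}(x,y,t)=K_{ij}(x,-y,t)$, so that it produces a sub-solution of $-\mathcal{I}$. The propagation of mass forward in time is provided by adding the function $m(t)$ solving the ODE \eqref{e:odeode}, so that the sum is a subsolution of the full operator $\D_t^\alpha - \mathcal{I}$.

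Solving and controlling $m$ is where the new work is done, and where I will lean on Theorem \ref{t:main}. Applied with right-hand side $c_0|\{u(\cdot,t)\leq 0\}\cap B_1| - C_1 m$, Theorem \ref{t:main} gives a uniform a priori Hölder estimate for any solution of \eqref{e:odeode}, which combined with a Perron-type construction yields existence; the roughness of the right-hand side and the low regularity of $m$ as a test function are accommodated by the Lipschitz-subsolution approximation of Section \ref{s:barrier}, producing admissible test functions converging to $m$ from below in the nondivergence viscosity sense of Section \ref{s:viscosity}. The crucial quantitative step --- that large measure of $\{u\leq 0\}$ on $B_1 \times (-2,-1)$ forces $m(t) \geq \mu_2$ on $(-1,0)$ --- is exactly where the hypothesis \eqref{e:timediv} enters: it supplies the analogue of divergence structure in the time variable, enabling an energy/duality argument modeled on \cite{acv16,2acv16} to replace the explicit representation formulas used in \cite{a16} for the pure Caputo case.

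The main obstacle I expect is this quantitative mass-to-pointwise bound for $m$: unlike the Caputo case, no closed-form solution is available, so the lower bound must be extracted solely from \eqref{e:alphabound}, \eqref{e:timediv} and the a priori Hölder estimate of Theorem \ref{t:main}, while verifying that the Lipschitz subsolution approximation does not destroy the quantitative information. Once this barrier lemma is in place, the remainder of the argument --- combining the spatial bump and $m(t)$ into a single subsolution, running the diminish of oscillation, and iterating dyadically --- follows the standard nonlocal parabolic scheme. Because the nonlocal elliptic estimates of \cite{cs09,cl141} and Theorem \ref{t:main} are both uniform as $\alpha,\sigma \to 1$, the final constants $C,\kappa,\e_0$ inherit the same uniformity, giving the claimed estimate.
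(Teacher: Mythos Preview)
Your outline matches the paper's proof almost exactly: Lemma \ref{l:decrease} is precisely the point estimate you describe (barrier $=$ spatial bump $b(x)$ combined with the ODE solution $m(t)$, approximated from below by the Lipschitz subsolution of Lemma \ref{l:zorn}); Lemma \ref{l:mubound} is the quantitative lower bound $m(t)\geq \mu_2$ on $[-1,0]$, obtained via the divergence-form identity of Corollary \ref{c:divint} (this is where \eqref{e:timediv} enters, exactly as you anticipate); and Lemma \ref{l:down} together with the final iteration is your diminish-of-oscillation plus dyadic scaling with tail control. One small structural point: in the paper the barrier uses the \emph{product} $g(t)b(x)$ rather than a sum, which is what makes the cross terms in $\D_t^\alpha - M^+$ work out cleanly at the maximum point; you should be aware of this when you write the details.

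There is, however, one genuine overclaim in your last paragraph. You assert that the constants are uniform as both $\alpha\to 1$ and $\sigma\to 1$ because the ingredients from \cite{cs09,cl141} are. The paper explicitly does \emph{not} obtain uniformity as $\sigma\to 1$: the spatial part of Lemma \ref{l:decrease} follows \cite{s11} rather than the more delicate \cite{cl141} machinery, and the estimate \eqref{e:silv} on $M^+v$ degenerates as $\sigma\to 1$. The theorem as stated only claims uniformity in $\alpha$, and the author remarks after Theorem \ref{t:main2} that extending to uniform-in-$\sigma$ estimates is left to future work. So drop that claim, or be prepared to import the full \cite{cl141} argument in place of the simpler barrier.
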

  
  We note that the estimates in Theorem \ref{t:main2} remain uniform as the order $\alpha$ of 
  the fractional time derivative approaches $1$. However, the estimates do not remain uniform as the order of the elliptic operator $\sigma$
  approaches $1$. Having obtained the necessary theory for solutions to ordinary differential equations involving $\D_t^{\alpha}$,
   the author plans in a future work to utilize the techniques of Chang Lara and Davila in \cite{cl141} to prove Theorem
   \ref{t:main2} with estimates uniform as $\sigma$ approaches $1$. 
   What would also be of interest is to prove 
  H\"older regularity to solutions of an equation of the form
      \begin{equation}  \label{e:nontime}
    \begin{aligned}
     &\sup_{k}\inf_{l}\left( \int_{-\infty}^t [u(x,t)-u(x,s)]\K^{kl}(t,s,x) \ ds \right) \\
      & \quad - \sup_{i}\inf_{j}\left( \int_{\R^n}u(x+y,t)-u(x,t)K^{ij}(x,y,t)  \ dy \right) = f(x,t).
    \end{aligned}
   \end{equation}
  To do so it appears necessary to prove Theorem \ref{t:main} with the H\"older estimate depending on the $L^p$ norm of the right hand side $f$. 
   
 \subsection{Notation}
  We here define notation that will be consistent thoughout the paper. 
   \begin{itemize}
    \item $\partial_t^{\alpha}$ - the Caputo derivative as defined in \eqref{e:fracform}.
    \item $\D_t^{\alpha}$ - the generalized Marchaud derivative as defined in \eqref{e:maintime}
        \item $\alpha$ - will always denote the order of the fractional time  derivative.
    \item $\sigma$ - will always denote the order of the nonlocal elliptic spatial operator. 
    \item $t,s$ - will always be variables reserved as time variables. 
    \item $K(x,y,t)$ - the kernel for the elliptic operator $\mathcal{L}$ as defined in \eqref{e:ellnon}.
    \item $\K(t,s)$ - the kernel for $\D_t^{\alpha}$ as defined in \eqref{e:maintime}.
    \item $M_{\sigma}^{\pm}$ - Pucci's extremal operators (defined in Section \ref{s:viscosity}) for the elliptic spatial operators. 
    \item $M_{\alpha}^{\pm}$ - Pucci's extremal operators (defined in Section \ref{s:viscosity}) for the fractional time derivatives.
    \item $\lambda, \Lambda$ - Ellipticity constants as appearing in \eqref{e:elliplamb} and \eqref{e:alphabound}. 
    \item $Q_r(x_0,t_0)$ - the space-time cylinder $B_r(x_0) \times (t_0 - r^{2\sigma/\alpha} , t_0)$.
    \item $Q_r$ - the cylinder centered at the origin $Q_r(0,0)$. 
   \end{itemize}
   
 \subsection{Outline}
  The outline of our paper is as follows:
  In Section \ref{s:viscosity} we explain the notion of viscosity solution that will be used in the paper. 
  In Section \ref{s:unique} we use a standard method to show the comparison principle
  and uniqueness for ordinary differential equations involving $\D_t^{\alpha}$.  
  In Section \ref{s:holder} we prove our first main result that solutions to certain ordinary differential equations 
  involving  $\D_t^{\alpha}$ are H\"older continuous. 
  In Section \ref{s:barrier} we establish the necessary properties for solutions of \eqref{e:odeode}. 
  In Section \ref{s:holder2} we prove our second main result that solutions to parabolic equations of type \eqref{e:main}
  are H\"older continuous.

\section{Viscosity Solutions and Pucci's Extremal Operators}  \label{s:viscosity}
 In order to study weak solutions of equations of type $\D_t^{\alpha} u - \mathcal{L} u=f$ in nondivergence form we will utilize the notion of viscosity solution. 
 In order to consider $u$ as a solution on $Q_r(x,t)$, we must have $u: (\R^n \times (t-r^{2\sigma/\alpha},t)) \cup (B_r(x) \times (-\infty,t)) \to \R$. The right hand side 
 $f: Q_r(x,t) \to \R$. 
 For the elliptic operator $\mathcal{L}$ we recall the notion of Pucci's extremal operators introduced in \cite{cs09}.
 For fixed time $t$ we denote the second order
 difference $\delta(u,x,y,t):= u(x+y,t)+u(x-y,t)-2u(x,t)$. We fix  two constants $0 < \lambda \leq \Lambda$ and define
  \[
   \begin{aligned}
    M_\sigma^+ u(x,t) &:= \int_{\R^n} \frac{\Lambda \delta(u,x,y,t)_+ - \lambda\delta(u,x,y,t)_-}{|y|^{n+2\sigma}} \\
    M_\sigma^- u(x,t) &:= \int_{\R^n} \frac{\lambda \delta(u,x,y,t)_+ - \Lambda\delta(u,x,y,t)_-}{|y|^{n+2\sigma}}.
   \end{aligned}
  \]
 We now define a Pucci-type extremal operator for fractional derivatives of type \eqref{e:maintime}. For fixed $x \in \R^n$,
  \[
   \begin{aligned}
    M_{\alpha}^+ u(x,t) &:= \frac{\alpha}{\Gamma(1-\alpha)} \int_{-\infty}^t \frac{\Lambda (u(x,t)-u(x,s))_+ - \lambda (u(x,t)-u(x,s))_- }{(t-s)^{1+\alpha}} \\
    M_{\alpha}^- u(x,t) &:= \frac{\alpha}{\Gamma(1-\alpha)} \int_{-\infty}^t \frac{\lambda (u(x,t)-u(x,s))_+ - \Lambda (u(x,t)-u(x,s))_- }{(t-s)^{1+\alpha}}.
   \end{aligned}
  \] 
  Since $\alpha$ is reserved for $\D_t^{\alpha}$ and $\sigma$ is reserved for the kernel of $\mathcal{L}$, there should be no confusion 
  between $M_{\alpha}^{\pm}$ and $M_{\sigma}^{\pm}$. 
  These operators give rise to the equations
  \[
  \begin{aligned}  
   &M_{\alpha}^- u(x,t) - M^+ u(x,t) \leq f(x,t)    \\
   &M_{\alpha}^+ u(x,t) - M^- u(x,t) \geq f(x,t).    
  \end{aligned}
  \]
  Since in this paper we show regularity for the parabolic equation when the kernels $\K(t,s)$ satisfy \eqref{e:timediv}, we will only consider solutions to 
  \begin{alignat}{2}  
   &\D_t^{\alpha} u(x,t) - M^+ u(x,t) \leq f(x,t)   \label{e:p3} \\
   &\D_t^{\alpha} u(x,t) - M^- u(x,t) \geq f(x,t).   \label{e:p4} 
  \end{alignat} 
  When proving regularity for ordinary differential equations we will not assume 
  $\K(t,s)$ satisfies \eqref{e:timediv}, so we consider solutions to 
  \begin{alignat}{2}  
   &M_{\alpha}^- u(t) \leq f(t)   \label{e:p5} \\
   &M_{\alpha}^+ u(t)  \geq f(t).   \label{e:p6} 
  \end{alignat}   
  
  As in \cite{cc95} we have the following properties for Pucci's extremal operators. 
  \begin{proposition}  \label{p:basics}
   For fixed $u,v$ evaluated at fixed $(x,t)$ we have the following properties where $M^{\pm}$ 
    denote either $M_{\alpha}^{\pm}$ or $M_{\sigma}^{\pm}$. 
    \[
     \begin{aligned}
       (i) \quad  &M^- u \leq M^+ u. \\
       (ii) \quad &M^- u = -M^+ (-u). \\
       (iii) \quad &M^{\pm} cu = c M^{\pm} u \text{ if } c\geq0. \\
       (iv) \quad &M^+ u   + M^- v \leq M^+(u+v) \leq M^+ u + M^+ v.\\
        (v) \quad &M^- u + M^- v \leq M^- (u+v) \leq M^-u + M^+ v.   
     \end{aligned}
    \]
  \end{proposition}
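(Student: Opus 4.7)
My plan is to reduce all five properties to pointwise identities and inequalities for the two real-valued functions $F^+(a) := \Lambda a_+ - \lambda a_-$ and $F^-(a) := \lambda a_+ - \Lambda a_-$ applied to a linear increment of $u$. Indeed, both $M_\alpha^{\pm}$ and $M_\sigma^{\pm}$ have the common form $\int F^{\pm}(\ell(u))\, d\mu$, where $\ell$ is a linear functional of $u$ (namely $\ell(u) = \delta(u,x,y,t)$ for $M_\sigma^{\pm}$ and $\ell(u) = u(x,t) - u(x,s)$ for $M_\alpha^{\pm}$) and $d\mu$ is a positive measure on the underlying domain. Linearity of $\ell$ together with positivity of $d\mu$ allow any pointwise inequality for $F^{\pm}$ to be integrated term by term, so the whole proposition will follow once the analogous statements for $F^{\pm}$ are verified.

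Properties (i)--(iii) are immediate from the definitions. Since $\lambda \leq \Lambda$ and $a_{\pm}\geq 0$, one has $F^-(a)\leq F^+(a)$, giving (i). For (ii), the identity $(-a)_{\pm} = a_{\mp}$ yields $-F^+(-a) = \lambda a_+ - \Lambda a_- = F^-(a)$. Property (iii) is simply the positive homogeneity of $a\mapsto a_{\pm}$.

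For (iv) and (v) I would use the key decomposition that $a+b = (a_+ + b_+ - r) - (a_- + b_- - r)$ where $r := \min(a_+ + b_+,\ a_- + b_-)\geq 0$ represents the amount that cancels between the positive and negative parts. This gives the exact identities
\[
F^+(a+b) = F^+(a) + F^+(b) - (\Lambda - \lambda)\, r, \qquad F^-(a+b) = F^-(a) + F^-(b) + (\Lambda - \lambda)\, r.
\]
The upper bound of (iv) then follows by dropping $(\Lambda-\lambda)r\geq 0$, while the lower bound rearranges into $F^+(a+b) - F^+(a) - F^-(b) = (\Lambda-\lambda)(|b| - r)\geq 0$. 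The two bounds in (v) are obtained symmetrically from the second identity.

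The only step that requires a short verification, rather than a genuine obstacle, is the elementary inequality $r \leq \min(|a|,|b|)$ used in the lower bounds. This reduces to a case analysis on the sign of $a+b$: if $a+b\geq 0$ then $r = a_- + b_-$, and combining this with $a+b\geq 0$ forces $r\leq \min(|a|,|b|)$; the case $a+b\leq 0$ is symmetric and can also be obtained from (ii). Once the pointwise identities are in hand, integrating $F^{\pm}(\ell(u))$ against the positive kernels $|y|^{-n-2\sigma}\,dy$ and $\frac{\alpha}{\Gamma(1-\alpha)}(t-s)^{-1-\alpha}\,ds$ delivers the stated properties for $M_\sigma^{\pm}$ and $M_\alpha^{\pm}$ simultaneously.
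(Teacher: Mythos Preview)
Your argument is correct. The reduction to the pointwise functions $F^{\pm}(a)=\Lambda a_+-\lambda a_-$ and $\lambda a_+-\Lambda a_-$ is exactly the right observation: both $M_\sigma^{\pm}$ and $M_\alpha^{\pm}$ are integrals of $F^{\pm}$ applied to a linear increment against a nonnegative kernel, so every pointwise inequality for $F^{\pm}$ integrates to the corresponding operator inequality. Your identities
\[
F^{+}(a+b)=F^{+}(a)+F^{+}(b)-(\Lambda-\lambda)r,\qquad
F^{-}(a+b)=F^{-}(a)+F^{-}(b)+(\Lambda-\lambda)r,
\]
with $r=\min(a_++b_+,\,a_-+b_-)$, are easily verified, and the bound $r\le\min(|a|,|b|)$ does indeed follow from the case split on the sign of $a+b$ (when $a+b\ge 0$ one has $r=a_-+b_-$, and at most one of $a_-,b_-$ is nonzero, the nonzero one being dominated by the other variable's absolute value via $a+b\ge 0$; the other case is symmetric). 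This cleanly yields (iv), and (v) follows either by the same computation or, as you note, from (ii).

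As for comparison with the paper: the paper does not actually supply a proof of this proposition. It simply records the statement and points to \cite{cc95}, where the analogous properties for the classical (local) Pucci operators are established. Your write-up therefore goes beyond what the paper offers; it is a self-contained verification in the nonlocal setting, and the unified treatment via $F^{\pm}$ and a linear increment handles $M_\alpha^{\pm}$ and $M_\sigma^{\pm}$ simultaneously, which is a nice economy.
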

  
  We now define a viscosity solution.  
 We say that an upper semi-continuous function $u$ is a viscosity subsolution of \eqref{e:main} (or a solution of \eqref{e:p3}) in $Q_r$ if whenever 
 a $C^{2,1}$ function satisfies $\phi \geq u$ on $[t_0-r,t_0] \times B_{\rho}(x_0)$ and $\phi(x_0,t_0)=u(x_0,t_0)$ with $(x_0,t_0) \in Q_r$, and if $v$ is defined as 
  \[ 
   v(x,t) := 
    \begin{cases}
      \phi(x,t) & \text{if } (x,t) \in[t_0-r,t_0]\times B_{\rho}(x_0) \\ 
      u(x,t) & \text{ otherwise } ,\\
    \end{cases}
  \]
 Then $\D_t^{\alpha} v(x_0,t_0) - \mathcal{I}v(x_0,t_0) \leq f(x_0,t_0)$ (or $v$ is a solution to \eqref{e:p3} at $(x_0,t_0)$). 
  A viscosity supersolution of \eqref{e:main} (or a solution of \eqref{e:p4}) for lower semi-continuous functions
 is defined similarly. We point out that a viscosity subsolution (supersolution) of \eqref{e:main} is a viscosity solution of \eqref{e:p3} (\eqref{e:p4}). 
 A solution is both a subsolution and supersolution, and consequently a continuous function. 
 
 The notion of viscosity solutions and supersolutions for $\D_t^{\alpha} u = f$ (or solutions to \eqref{e:p5} and \eqref{e:p6}) are similarly defined. 
 We note that we may extend our class of test functions that touch from above or below to functions $\phi$ that are $C^2$ in the $x$-variable for 
 fixed $t$ and Lipschitz in time for fixed $x$. It is clear that if function $\D_t^{\alpha} u$ can be evaluated classically and solves $\D_t^{\alpha} u=f$ then $u$ is  
 a solution in the viscosity sense. This is made clear in the following two Propositions.
%
 
 \begin{proposition}   \label{p:classic}
  Let $u$ be a continuous bounded function. Let $\phi \in C^{0,\gamma}$ with $\alpha < \gamma \leq 1$. If $\phi \geq (\leq )u$ on $[t_0 - \e, t_0]$ and 
  $\phi(t_0)=u(t_0)$, then the integral
   \[
    \int_{-\infty}^{t_0} [u(t_0)-u(s)] \K(t,s)\ ds 
   \]
   is well defined and possibly $\infty$   $\ ( -\infty  )$ so that $\D_t^{\alpha} u (t_0)$ is well defined. 
 \end{proposition}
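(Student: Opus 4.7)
\smallskip

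The plan is to split the integral at $t_0 - \epsilon$ and show that on the tail $(-\infty, t_0-\epsilon]$ the integrand is absolutely integrable, while on the inner interval $[t_0-\epsilon, t_0]$ the test function supplies a one-sided H\"older bound that controls the dangerous sign.

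First I would handle the tail. Since $u$ is bounded we have $|u(t_0)-u(s)|\le 2\|u\|_{L^\infty}$, and the upper bound in \eqref{e:alphabound} gives $\mathcal{K}(t_0,s)\le \frac{\alpha}{\Gamma(1-\alpha)}\Lambda(t_0-s)^{-1-\alpha}$, whose restriction to $(-\infty,t_0-\epsilon]$ is integrable. Hence
\[
\int_{-\infty}^{t_0-\epsilon}\bigl|u(t_0)-u(s)\bigr|\,\mathcal{K}(t_0,s)\,ds<\infty,
\]
and this piece of the integral is an ordinary real number.

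For the inner piece I would use the test function exactly as in the classical viscosity theory. Suppose $\phi\ge u$ on $[t_0-\epsilon,t_0]$ with $\phi(t_0)=u(t_0)$ (the subsolution case). Then for $s\in[t_0-\epsilon,t_0]$,
\[
u(t_0)-u(s)\;\ge\; \phi(t_0)-\phi(s)\;\ge\; -\|\phi\|_{C^{0,\gamma}}\,(t_0-s)^{\gamma}.
\]
Combining with the upper bound on $\mathcal{K}$ yields
\[
\bigl([u(t_0)-u(s)]\mathcal{K}(t_0,s)\bigr)_{-}\;\le\; C\,(t_0-s)^{\gamma-1-\alpha},
\]
which is integrable on $[t_0-\epsilon,t_0]$ precisely because $\gamma>\alpha$ implies $\gamma-1-\alpha>-1$. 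Thus the negative part of the integrand is integrable on all of $(-\infty,t_0]$, so the integral is well defined as an element of $(-\infty,+\infty]$, and $\mathcal{D}_t^{\alpha}u(t_0)$ makes sense with the possible value $+\infty$. The supersolution case is symmetric: the inequality $\phi\le u$ gives $u(t_0)-u(s)\le \|\phi\|_{C^{0,\gamma}}(t_0-s)^{\gamma}$, the positive part is integrable, and the integral lives in $[-\infty,+\infty)$.

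The only step that requires any care is the inner estimate, since one must observe that the one-sided touching condition is enough: the test function only controls the sign of $u(t_0)-u(s)$ that would cause the singularity of $\mathcal{K}$ to be non-integrable. The gap between $\gamma$ and $\alpha$ is then exactly what is needed to absorb the $(t_0-s)^{-1-\alpha}$ singularity. No growth or regularity at infinity is needed beyond boundedness of $u$.
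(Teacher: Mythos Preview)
Your proof is correct and follows essentially the same approach as the paper: both split the integral at $t_0-\epsilon$, use boundedness of $u$ together with the kernel bound \eqref{e:alphabound} on the tail, and use the one-sided touching by the $C^{0,\gamma}$ test function on the inner interval to control the dangerous sign via the integrability of $(t_0-s)^{\gamma-1-\alpha}$. Your phrasing in terms of the negative part being integrable is equivalent to the paper's explicit lower bound $\geq -C_2$.
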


 \begin{proof}
  Without loss of generality we assume that $\phi \geq u$ on $[t_0 - \e, t_0]$. Then 
   \[
    \begin{aligned}
    &\int_{-\infty}^{t_0} [u(t_0) - u(s)] \K(t,s) \ ds \\
    &\quad =\int_{-\infty}^{t_0 - \e} [u(t_0) - u(s)] \K(t,s) \ ds + \int_{t_0 - \e}^{t_0} [u(t_0) - u(s)] \K(t,s) \ ds\\
       &\quad\geq \int_{-\infty}^{t_0 - \e} [u(t_0) - u(s)] \K(t,s) \ ds + \int_{t_0 - \e}^{t_0} [u(t_0) - \phi(s)] \K(t,s) \ ds\\
       &\quad= \int_{-\infty}^{t_0 - \e} [u(t_0) - u(s)] \K(t,s) \ ds + \int_{t_0 - \e}^{t_0} [\phi(t_0) - \phi(s)] \K(t,s) \ ds\\ 
       &\quad\geq \int_{-\infty}^{t_0 - \e} [u(t_0) - u(s)] \K(t,s) \ ds - \Lambda \| \phi \|_{C^{0,\gamma}} \int_{t_0 - \e}^{t_0} (t_0 - s)^{\gamma-1-\alpha} \ ds\\ 
       &\quad\geq \int_{-\infty}^{t_0 - \e} [u(t_0) - u(s)] \K(t,s) \ ds - C_1 \\
       &\quad \geq -2\Lambda \| u \|_{L^{\infty}} \int_{-\infty}^{t_0 - \e} (t_0-s)^{-1-\alpha} \ ds - C_1 \\
       &\quad \geq -C_2. 
    \end{aligned}
   \]
  Therefore, the integral is well defined and possibly $\infty$. 
 \end{proof}
 
 \begin{proposition}   \label{p:classic2}
  Let $u$ be a continuous bounded function on $(-\infty,T]$ and assume that for some $t \in (-\infty,T]$ there exists a Lipschitz function
  touching $u$ by below (above) at $t$. From Proposition \ref{p:classic}, the term $\D_t^{\alpha} u(t)$ is well defined and we have that
    \begin{equation}   \label{e:pclassic}
     \int_{-\infty}^{t} [u(t)-u(s)] \K(t,s)\ ds \geq (\leq) f(t)
    \end{equation}   
   if and only if $\D_t^{\alpha} u(t)\leq (\geq) f(t)$ in the viscosity sense. 
 \end{proposition}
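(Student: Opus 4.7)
My plan is to prove both directions of the equivalence, leveraging the specific form of the generalized Marchaud operator together with the fact that the hypothesized Lipschitz $\phi$ can itself serve as an admissible test function under the extended test class ($C^2$ in space, Lipschitz in time) noted in the excerpt. I describe the subsolution case (Lipschitz $\phi$ touching $u$ from above at $t$); the supersolution case is handled symmetrically, e.g.\ by replacing $u$ with $-u$. By Proposition \ref{p:classic}, the integral $\int_{-\infty}^t [u(t)-u(s)]\K(t,s)\,ds$ exists in $(-\infty,+\infty]$, so the classical value $\D_t^\alpha u(t)$ is well-defined in this extended sense.

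For the direction \emph{classical implies viscosity}, let $\psi$ be any admissible upper test function with $\psi\geq u$ on $[t-r,t]$ and $\psi(t)=u(t)$, and set $v=\psi$ on $[t-r,t]$, $v=u$ elsewhere. Since $v$ and $u$ agree off $[t-r,t]$ and at $t$,
\begin{equation*}
\D_t^\alpha v(t) - \D_t^\alpha u(t) = \int_{t-r}^t [u(s)-\psi(s)]\,\K(t,s)\,ds \leq 0,
\end{equation*}
where the inequality uses $\psi\geq u$. Hence $\D_t^\alpha v(t) \leq \D_t^\alpha u(t) \leq f(t)$, which is exactly the viscosity subsolution property at $t$.

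The converse, \emph{viscosity implies classical}, is the core of the argument. I use the hypothesized Lipschitz $\phi$ itself as the test function: for each $r\in(0,r_0]$, define $v_r=\phi$ on $[t-r,t]$ and $v_r=u$ elsewhere, so that the viscosity inequality yields
\begin{equation*}
\D_t^\alpha v_r(t) = \int_{-\infty}^{t-r}[u(t)-u(s)]\,\K(t,s)\,ds + \int_{t-r}^t [\phi(t)-\phi(s)]\,\K(t,s)\,ds \leq f(t).
\end{equation*}
Denoting by $L$ the Lipschitz constant of $\phi$, the second integral is bounded by a multiple of $L\,r^{1-\alpha}$ via \eqref{e:alphabound}, hence vanishes as $r\to 0^+$ since $\alpha<1$. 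For the first integral, the one-sided bound $u(t)-u(s)\geq -L(t-s)$ near $t$ (from $u(s)\leq\phi(s)\leq u(t)+L(t-s)$) makes the negative part $[u(t)-u(s)]_-\K(t,s)$ integrable on $(-\infty,t]$, so monotone convergence yields $\int_{-\infty}^{t-r}[u(t)-u(s)]\K(t,s)\,ds \to \D_t^\alpha u(t)$ in the extended reals as $r\to 0^+$. Passing to the limit produces the classical inequality $\D_t^\alpha u(t)\leq f(t)$.

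The main obstacle is that $\D_t^\alpha u(t)$ might be $+\infty$ a priori; the above convergence argument would then force $\D_t^\alpha v_r(t)\to +\infty$, contradicting the uniform viscosity bound $\D_t^\alpha v_r(t)\leq f(t)$ for small $r$. Thus the infinite case is ruled out automatically, and one obtains a finite classical value satisfying the inequality. The symmetric supersolution case (Lipschitz $\phi$ touching from below, giving well-definedness in $[-\infty,+\infty)$ and the reverse inequalities) is entirely analogous.
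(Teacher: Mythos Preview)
Your proof is correct. The forward direction (classical $\Rightarrow$ viscosity) is essentially identical to the paper's argument: both observe that replacing $u$ by a test function $\psi\geq u$ on $[t-r,t]$ can only decrease the operator value, since the discrepancy is $\int_{t-r}^t [u(s)-\psi(s)]\K(t,s)\,ds\leq 0$.

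For the converse (viscosity $\Rightarrow$ classical) you take a genuinely different route. The paper fixes the interval $[t-\epsilon,t]$ once and for all, produces a sequence of Lipschitz functions $\phi_k$ touching $u$ at $t$ with $\phi_k\to u$ uniformly on that interval, applies the viscosity inequality to each $\phi_k$, and passes to the limit via dominated convergence. You instead keep the single hypothesized Lipschitz $\phi$ and shrink the testing interval $[t-r,t]$ to the point $t$: the $\phi$-integral vanishes like $r^{1-\alpha}$, while the truncated $u$-integral converges to the full classical value by splitting into positive and negative parts (the negative part being integrable thanks to the one-sided Lipschitz bound $u(t)-u(s)\geq -L(t-s)$). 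Your approach is a bit more elementary in that it avoids constructing the approximating sequence $\phi_k$ with the touching constraint $\phi_k(t)=u(t)$ and sidesteps the question of a uniform dominating function for DCT; the paper's approach, on the other hand, keeps the analysis on a single fixed interval and makes the role of dominated convergence explicit. Both arrive at the same conclusion, and your observation that the possibility $\D_t^\alpha u(t)=+\infty$ is automatically excluded by the viscosity bound is a clean way to close the argument.
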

 
 \begin{proof}
  Assume the inequality in \eqref{e:pclassic}. If $\phi$ touches $u$ from below in $[t-\e,t]$, then 
   \[
    \begin{aligned}
    f(t)&\leq \int_{-\infty}^{t} [u(t) - u(s)] \K(t,s) \ ds \\
    &\quad =\int_{-\infty}^{t - \e} [u(t) - u(s)] \K(t,s) \ ds + \int_{t - \e}^{t} [u(t) - u(s)] \K(t,s) \ ds\\
       &\quad\leq \int_{-\infty}^{t - \e} [u(t) - u(s)] \K(t,s) \ ds + \int_{t - \e}^{t} [u(t) - \phi(s)] \K(t,s) \ ds\\
       &\quad= \int_{-\infty}^{t - \e} [u(t) - u(s)] \K(t,s) \ ds + \int_{t - \e}^{t} [\phi(t) - \phi(s)] \K(t,s) \ ds.
    \end{aligned}
   \]
   and so $\D_t^{\alpha} u \geq f(t)$ in the viscosity sense.  Assume now that $\D_t^{\alpha} u \geq f(t)$ in the viscosity sense. 
   From the assumption, we may touch $u$ from below by a Lipschitz function $\phi$ in some neighborhood $[t-\e,t]$. 
   We may then find Lipschitz $\phi_k$ converging uniformly to $u$ in $[t - \e,t]$ with $\phi_k \geq u$ in $[t-\e,t]$. 
   Since the integral in \eqref{e:pclassic} is well defined we have from Lebesgue's dominated convergence theorem
    \[
     \begin{aligned}
     f(t) &\leq \lim_{k \to \infty}\int_{-\infty}^{t - \e} [u(t) - u(s)] \K(t,s) \ ds + \int_{t - \e}^{t} [\phi_k(t) - \phi_k(s)] \K(t,s) \ ds \\
          &= \lim_{k \to \infty} \int_{-\infty}^{t - \e} [u(t) - u(s)] \K(t,s) \ ds + \int_{t - \e}^{t} [u(t) - \phi_k(s)] \K(t,s) \ ds \\
          &= \int_{-\infty}^{t - \e} [u(t) - u(s)] \K(t,s) \ ds + \int_{t - \e}^{t} [u(t) - u(s)] \K(t,s) \ ds \\  
          &=  \int_{-\infty}^{t} [u(t) - u(s)] \K(t,s) \ ds.
      \end{aligned}
      \] 
  \end{proof}

  The notion of continuity is important for viscosity solutions. If we let 
  \[
   \begin{cases}
    u(t) = t^{\alpha - 1} &\text{ if } t>0 \\
    u(t) =0 &\text{ if }  t\leq 0,
   \end{cases}
  \]
  then one may explicitly compute that $\partial_t^{\alpha} u(t)=0$ for any $t \in \R$.  However, $u$ is not upper semi-continuous, and 
  therefore not a viscosity subsolution. 
   
 Viscosity solutions are closed under appropriate limits. 
  \begin{lemma}  \label{l:limit}
   Let $u_k$ be a sequence of continuous bounded viscosity solutions to \eqref{e:p3} (or \eqref{e:p4}) in $B_{R_1} \times [-R_2,0]$, 
   iconverging in $\R^{n} \times (-\infty,T]$ to $u_0$ bounded and continuous. Then 
   $u_0$ is a viscosity solution to \eqref{e:p3} (or \eqref{e:p4}) in $B_{R_1} \times [-R_2,0]$.
  \end{lemma}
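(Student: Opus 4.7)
The plan is to carry out the classical stability argument for viscosity solutions, adapted to the present nonlocal setting where both the time and space operators are of fractional integral type. I will treat only the subsolution case \eqref{e:p3}; the supersolution case \eqref{e:p4} is identical after replacing $M_\sigma^+$ by $M_\sigma^-$ and reversing inequalities.

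Fix a test function $\phi \in C^{2,1}$ touching $u_0$ from above on $[t_0-r,t_0]\times B_\rho(x_0)\subset [-R_2,0]\times B_{R_1}$ with $\phi(x_0,t_0)=u_0(x_0,t_0)$, and let $v_0$ denote the associated mixed function ($\phi$ inside the cylinder, $u_0$ outside). The first step is to force strict touching: replace $\phi$ by $\tilde\phi := \phi + \epsilon\eta$ with $\eta(x,t) = |x-x_0|^4 + (t_0-t)^2$. Since $\eta\geq 0$ and vanishes only at $(x_0,t_0)$, $\tilde\phi$ now touches $u_0$ from above strictly. Because $\eta$ vanishes to second order at $(x_0,t_0)$ and is supported (relative to $v_0$) only inside the cylinder, a direct estimate shows that the associated $\tilde v_0$ satisfies $|M_\sigma^+\tilde v_0(x_0,t_0)-M_\sigma^+ v_0(x_0,t_0)|\leq C\epsilon$ and $|\D_t^\alpha \tilde v_0(x_0,t_0)-\D_t^\alpha v_0(x_0,t_0)|\leq C\epsilon$, so sending $\epsilon\to 0$ at the end will recover the desired inequality for $v_0$.

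Next I transfer the touching to each $u_k$. Since $\tilde\phi - u_0$ has a strict interior minimum at $(x_0,t_0)$ and $u_k\to u_0$ locally uniformly, for large $k$ the function $\tilde\phi - u_k$ attains its minimum on the closed cylinder at some interior point $(x_k,t_k)\to(x_0,t_0)$; setting $c_k := \tilde\phi(x_k,t_k)-u_k(x_k,t_k)\to 0$, the translate $\tilde\phi - c_k$ touches $u_k$ from above at $(x_k,t_k)$. Let $\tilde v_k$ equal $\tilde\phi-c_k$ on a slightly smaller cylinder around $(x_k,t_k)$ and $u_k$ elsewhere; the viscosity subsolution property of $u_k$ yields
\[
\D_t^\alpha \tilde v_k(x_k,t_k) - M_\sigma^+ \tilde v_k(x_k,t_k) \leq f(x_k,t_k).
\]

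The heart of the proof is passing to the limit $k\to\infty$ in this inequality. For $\D_t^\alpha \tilde v_k(x_k,t_k)$ I split the integral at $s=t_k-\delta$: on the near piece the integrand is dominated by $C\|\partial_t\tilde\phi\|_\infty (t_k-s)^{-\alpha}$ via the Lipschitz regularity of $\tilde\phi$ in time, while on the far piece it is dominated by $2\|v_k\|_\infty(t_k-s)^{-1-\alpha}$; pointwise convergence of the integrand follows from continuity of $\tilde\phi$ and local uniform convergence of $u_k$, so dominated convergence applies. For $M_\sigma^+\tilde v_k(x_k,t_k)$ I split at $|y|=\delta$: the near part is controlled by $C\|D^2\tilde\phi\|_\infty |y|^{2-n-2\sigma}$ via Taylor expansion of $\tilde\phi$ in $x$, and the far part by $C\|v_k\|_\infty |y|^{-n-2\sigma}$; dominated convergence again applies. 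Combining with continuity of $f$ at $(x_0,t_0)$ and then sending $\epsilon\to 0$ gives $\D_t^\alpha v_0(x_0,t_0) - M_\sigma^+ v_0(x_0,t_0)\leq f(x_0,t_0)$, so $u_0$ is a viscosity subsolution of \eqref{e:p3}.

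The main technical obstacle is the simultaneous handling of both nonlocal operators together with the mixed structure of $\tilde v_k$ (smooth test function inside the cylinder, only continuous data outside). This is resolved by the double near/far decomposition above, which produces integrable dominants with spatial decay $\min(|y|^2,1)|y|^{-n-2\sigma}$ and temporal decay $\min(t_k-s,1)(t_k-s)^{-1-\alpha}$, uniform in $k$—precisely what is needed to invoke dominated convergence on both operators.
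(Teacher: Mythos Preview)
Your proposal is correct and carries out precisely the standard stability argument that the paper itself omits, its proof consisting of the single sentence ``The proof is standard and straightforward from the definition of viscosity solutions (see \cite{cc95}).'' Your near/far decomposition for both nonlocal operators is exactly the mechanism one needs to justify dominated convergence in this setting, so there is nothing to add.
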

  
  \begin{proof}
   The proof is standard and straightforward from the definition of viscosity solutions (see \cite{cc95}). 
  \end{proof}

  \section{Approximating Solutions} \label{s:unique}
   In order to show uniqueness for an ordinary differential equation such as $\D_t^{\alpha} u =f$ we use the notion of sup- and inf-convolution. 
   For a bounded and upper-semicontinuous function $u$ on $(-\infty,t_0]$, for $t\leq t_0$ we define
    \begin{equation}   \label{e:jensenup}
     u^{\e}(t):= \sup_{s\leq t} \{u(s) +\frac{1}{\e}(s-t) + \e\}. 
    \end{equation}
   If $u$ is bounded and lower-semicontinuous on $(-\infty,t_0]$,  for $t\leq t_0$ we define 
    \begin{equation}   \label{e:jensendown}
     u_{\e}(t):= \inf_{s\leq t} \{u(s) -\frac{1}{\e}(s-t) - \e\}.    
    \end{equation}
  
  We have the following properties 
   \begin{proposition}   \label{p:jensen1}
    For $u^{\e}$ as defined in \eqref{e:jensenup} and $t\leq t_0$ the following hold 
      \begin{alignat*}{2}
       &(1) \quad \text{there exists } t^* \leq t \text{ such that } u^{\e}(t)=u(t^*) + \frac{1}{\e}(t^*-t) + \e. \\
       &(2) \quad u^{\e}(t)\geq u(t)+ \e. \\
       &(3) \quad u^{\e}(t_2)-u^{\e}(t_1) \geq \e^{-1}(t_2-t_1) \text{ for } t_1 < t_2. \\
       &(4) \quad 0<\e_1<\e_2  \Rightarrow u^{\e_1}(t) \leq u^{\e_2}(t). \\
       &(5) \quad t-t^* \leq 2\e \sup |u|. \\
       &(6) \quad 0<u^{\e}(t)-u(t) \leq u(t^*) - u(t) + \e. 
      \end{alignat*}
   \end{proposition}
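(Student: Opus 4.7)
The plan is to treat these six properties as a bundle of routine consequences of the sup-convolution formula, with the one substantive step hidden in (1). For existence of the maximizer, I would observe that the functional $s \mapsto u(s) + \e^{-1}(s-t) + \e$ is upper semicontinuous (because $u$ is), and that the linear penalty $\e^{-1}(s-t)$ drives the expression to $-\infty$ as $s \to -\infty$; combined with the boundedness of $u$, this confines the supremum to a compact interval $[-R,t]$ on which an upper semicontinuous functional attains its maximum in the standard way.

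Once (1) is established, (2) is the test value $s = t$ in the defining sup. For (4), fix $s \leq t$: the map $\e \mapsto u(s) + \e^{-1}(s-t) + \e$ is monotone increasing, since $s - t \leq 0$ makes $\e^{-1}(s - t)$ increasing in $\e$ and the additive $+\e$ is obviously so; taking the sup preserves this monotonicity. Property (5) follows by combining (1) with (2): from $u(t^*) + \e^{-1}(t^* - t) + \e \geq u(t) + \e$ one rearranges to $t - t^* \leq \e(u(t^*) - u(t)) \leq 2\e\sup|u|$. Property (6) is the same identity from (1) read off in two directions; the lower bound is simply (2), and the upper bound comes from dropping the non-positive penalty $\e^{-1}(t^* - t)$ in $u^{\e}(t) = u(t^*) + \e^{-1}(t^*-t) + \e$.

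For (3), I would use that the optimal point $s_1^*$ for $u^{\e}(t_1)$ satisfies $s_1^* \leq t_1 < t_2$ and is therefore admissible in the sup defining $u^{\e}(t_2)$; evaluating the functional at this $s_1^*$ gives a direct comparison between $u^{\e}(t_2)$ and $u^{\e}(t_1)$ with a linear correction of size $\e^{-1}(t_2 - t_1)$. Equivalently, one can rewrite $u^{\e}(t) + \e^{-1}t = \sup_{s \leq t}[u(s) + \e^{-1}s + \e]$, which is visibly non-decreasing in $t$ because it is a supremum over an expanding family of indices; this monotonicity packages the one-sided Lipschitz content in a single line. I do not anticipate any real obstacle: the only step needing care is (1), where the coercivity of the linear penalty together with the boundedness and upper semicontinuity of $u$ must be combined to secure existence of the maximizer, and every other claim is direct bookkeeping from the defining formula.
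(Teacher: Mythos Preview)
Your proposal is correct and follows essentially the same path as the paper: the paper defers properties (1), (2), (4), (5), (6) to Lemma~5.2 of \cite{cc95} and only spells out (3), using exactly your admissibility argument (any $s\le t_1$ is admissible in the sup defining $u^{\e}(t_2)$, then take the sup over $s\le t_1$). Note that both your computation and the paper's actually yield $u^{\e}(t_2)-u^{\e}(t_1)\ge -\e^{-1}(t_2-t_1)$, which is the intended one-sided Lipschitz bound; the missing minus sign in the stated inequality (3) is a typo in the paper.
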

  
  \begin{proof}
   All properties except for $(3)$ are as in Lemma 5.2 in \cite{cc95}. For property $(3)$ we note that for $t\leq t_1 \leq t_2$ we have
    \[
     \begin{aligned}
      u^{\e}(t_2) &\geq u(t) + \frac{1}{\e}(t-t_2) + \e \\
                        &= u(t) +\frac{1}{\e}(t-t_1) + \e + \frac{1}{\e}(t_1 - t_2).
     \end{aligned}
    \]
   Taking the supremum over $t\leq t_1$ we obtain 
    \[
     u^{\e}(t_2) - u^{\e}(t_1) \geq -\e^{-1}(t_2 - t_1).
    \]
  \end{proof}
  
  Using the properties listed in Proposition \ref{p:jensen1}, it is standard to show the following Proposition which 
  is analogous to Theorem 5.1 in \cite{cc95} and Propositions 5.4 and 5.5 in \cite{cs09}. 
  \begin{proposition}  \label{p:jensen2}
   If $u$ is bounded and lower-semicontinuous (upper-semicontinuous) in $(-\infty,T]$, then $u_{\e} (-u^{\e}) \ \Gamma$-converges to
   $u (-u)$. If $u$ is continuous then $u^{\e},u_{\e}$ converge uniformly to $u$. Furthermore, if $\D_t^{\alpha} u \geq (\leq)f$, then 
   there exists $d_{\e} \to 0$ as $\e \to 0$ such that $\D_t^{\alpha} u_{\e} \geq f- d_{\e} \ (\D_t^{\alpha}u^{\e}  \leq f + d_{\e})$. 
  \end{proposition}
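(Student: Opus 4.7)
The plan is to establish the three claims in sequence, with the viscosity preservation being the main content. The first two ($\Gamma$-convergence and uniform convergence for continuous $u$) are straightforward consequences of the bounds in Proposition \ref{p:jensen1}. For the $\Gamma$-$\limsup$ inequality I use $u_\e(t)\le u(t)-\e$, obtained by taking $s=t$ in the defining infimum; for the $\Gamma$-$\liminf$ along $t_k\to t$ I select $t_k^*\le t_k$ realizing the infimum, observe from property $(5)$ combined with the $L^{\infty}$ bound on $u$ that $t_k-t_k^*\to0$ hence $t_k^*\to t$, and apply lower semicontinuity to conclude $\liminf u_{\e_k}(t_k)\ge\liminf u(t_k^*)\ge u(t)$. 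When $u$ is continuous the identity $u^{\e}(t)-u(t)=u(t^*)-u(t)+(t^*-t)/\e+\e$, together with $t-t^*\to0$ uniformly (property $(5)$) and uniform continuity of $u$ on compact subsets of $(-\infty,T]$, upgrades this to uniform convergence. The sup-convolution / upper-semicontinuous case is symmetric.

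For the viscosity statement, first use property $(3)$ to see that $u_\e$ is Lipschitz, so by Proposition \ref{p:classic2} the quantity $\D_t^{\alpha}u_\e(t_0)$ is classical. Fix $t_0$ and let $t_0^*\le t_0$ achieve the infimum, with $r:=t_0-t_0^*$. The key algebraic observation is that for any $s\le t_0$, taking $s':=t_0^*+(s-t_0)\le s$ as a candidate in the infimum defining $u_\e(s)$ yields
\[
u_\e(s)\;\le\;u(t_0^*+(s-t_0))+\tfrac{r}{\e}-\e,\qquad\text{hence}\qquad u_\e(t_0)-u_\e(s)\;\ge\;u(t_0^*)-u(t_0^*+(s-t_0)).
\]
Multiplying by $\K(t_0,s)$, integrating over $s\le t_0$, and substituting $\tau=t_0^*+(s-t_0)$ gives
\[
\D_t^{\alpha}u_\e(t_0)\;\ge\;\int_{-\infty}^{t_0^*}[u(t_0^*)-u(\tau)]\,\K(t_0,\tau+r)\,d\tau.
\]
To connect the right-hand side to $f(t_0^*)$, I activate the viscosity hypothesis at $t_0^*$ with the translated test function $\psi(s):=\phi(s+r)-r/\e+\e$ (for any Lipschitz $\phi$ touching $u_\e$ from below at $t_0$); the inequality above, together with $\psi(t_0^*)=u_\e(t_0)-r/\e+\e=u(t_0^*)$, gives $\psi\le u$ near $t_0^*$ with equality at $t_0^*$.

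The main obstacle is that the translated kernel $\K(t_0,\tau+r)$ appearing after the change of variables need not equal $\K(t_0^*,\tau)$, since $\K$ is required only to satisfy the pointwise bounds \eqref{e:alphabound} and is not assumed translation-invariant or even continuous. I finesse this by observing that $\K(t_0,\tau+r)$, viewed as a kernel at $t_0^*$, still satisfies \eqref{e:alphabound} (the bounds depend only on the difference $t_0^*-\tau$), so it is an admissible kernel in the Pucci extremal framework of Section \ref{s:viscosity}; the viscosity supersolution inequality then applies through $M_\alpha^+$ and yields $\int_{-\infty}^{t_0^*}[u(t_0^*)-u(\tau)]\K(t_0,\tau+r)\,d\tau\ge f(t_0^*)$. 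The remaining gap $|f(t_0^*)-f(t_0)|$ tends to $0$ uniformly in $t_0$ because $r\to 0$ uniformly (property $(5)$) and $f$ is continuous on the relevant compact interval; setting $d_\e$ equal to the modulus of continuity of $f$ at scale $2\e\sup|u|$ yields $\D_t^{\alpha}u_\e\ge f-d_\e$. The sup-convolution / subsolution case follows by replacing $u$ with $-u$.
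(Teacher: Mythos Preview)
Your treatment of the $\Gamma$-convergence and uniform convergence is fine and follows the standard bounds from Proposition~\ref{p:jensen1}. The algebraic core of your viscosity argument is also correct: the inequality $u_\e(t_0)-u_\e(s)\ge u(t_0^*)-u(s-r)$ and the fact that the shifted test function $\psi(\tau)=\phi(\tau+r)-r/\e+\e$ touches $u$ from below at $t_0^*$ are exactly the right observations, and the paper (which simply cites \cite{cc95,cs09}) relies on the same mechanism.

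There is, however, a genuine gap in your ``finesse'' step. After the change of variables you arrive at
\[
\D_t^{\alpha}u_\e(t_0)\;\ge\;\int_{-\infty}^{t_0^*}[u(t_0^*)-u(\tau)]\,\K(t_0,\tau+r)\,d\tau,
\]
and you want this to be $\ge f(t_0^*)$. You argue that $\tilde\K(t_0^*,\tau):=\K(t_0,\tau+r)$ satisfies \eqref{e:alphabound} (true) and therefore ``the viscosity supersolution inequality applies through $M_\alpha^+$''. But the hypothesis $\D_t^{\alpha}u\ge f$ only yields $M_\alpha^+u(t_0^*)\ge f(t_0^*)$, i.e.\ the \emph{supremum} over admissible kernels of the integral is $\ge f(t_0^*)$. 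It does \emph{not} give that the integral against the \emph{particular} kernel $\tilde\K$ is $\ge f(t_0^*)$. So the chain breaks here.

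What your argument does prove cleanly is $M_\alpha^+ u_\e(t_0)\ge f(t_0^*)$: since $a\ge b$ implies $\Lambda a_+-\lambda a_-\ge \Lambda b_+-\lambda b_-$ and $(t_0-s)=(t_0^*-(s-r))$, the translation-invariant operator $M_\alpha^+$ passes directly through the shift. To obtain the stated conclusion $\D_t^{\alpha}u_\e\ge f-d_\e$ for the \emph{specific} kernel $\K$, you need an additional ingredient linking $\K(t_0,\tau+r)$ to $\K(t_0^*,\tau)$: either translation invariance of $\K$ (so the two coincide and the gap disappears), or some modulus of continuity for $\K$ that lets you absorb the discrepancy into $d_\e$. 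The references the paper points to (\cite{cc95} and especially Propositions~5.4--5.5 of \cite{cs09}) work in settings where the linear pieces are translation invariant, which is why the issue does not surface there; you should make the corresponding hypothesis explicit, or reformulate the conclusion in terms of $M_\alpha^\pm$ and check that this weaker version suffices for Lemma~\ref{l:jensen}.
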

  
  \begin{lemma}  \label{l:jensen}
   Let $u$ be bounded and upper-semicontinuous and $v$ be bounded and lower semi-continuous on $(-\infty,T]$. Let $f,g$ be continuous
   functions. If 
   $\D_t^{\alpha} u \leq f$ and $\D_t^{\alpha} v \geq g$ in the viscosity sense, then $\D_t^{\alpha} (v-u) \geq g-f$ in the viscosity sense. 
  \end{lemma}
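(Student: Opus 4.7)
The strategy is to regularize $u$ and $v$ by the sup- and inf-convolutions of \eqref{e:jensenup}--\eqref{e:jensendown}, exploit the linearity of $\D_t^{\alpha}$ on these Lipschitz regularizations, and then pass to the limit via $\Gamma$-convergence. First I form $u^{\e}$ and $v_{\e}$. By Proposition \ref{p:jensen2}, $\D_t^{\alpha} u^{\e} \leq f + d_{\e}$ and $\D_t^{\alpha} v_{\e} \geq g - d_{\e}$ in the viscosity sense, with $d_{\e} \to 0$, and both $u^{\e}$ and $v_{\e}$ are Lipschitz on $(-\infty,T]$. A Lipschitz function trivially touches itself from above and from below at every point, so Proposition \ref{p:classic2} upgrades these viscosity inequalities to pointwise classical inequalities for the Marchaud-type integrals.

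Next I use that $\D_t^{\alpha}$ is a \emph{linear} operator, namely integration against the kernel $\K(t,s)$. Since $\D_t^{\alpha} u^{\e}(t)$ and $\D_t^{\alpha} v_{\e}(t)$ are both finite and well-defined classically, subtracting yields
\[
\D_t^{\alpha}(v_{\e} - u^{\e})(t) = \D_t^{\alpha} v_{\e}(t) - \D_t^{\alpha} u^{\e}(t) \geq g(t) - f(t) - 2d_{\e}
\]
classically for every $t \leq T$. Because $v_{\e} - u^{\e}$ is itself Lipschitz, Proposition \ref{p:classic2} applied once more converts this pointwise inequality back into the viscosity inequality $\D_t^{\alpha}(v_{\e} - u^{\e}) \geq g - f - 2d_{\e}$.

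Finally I pass to the limit $\e \to 0$. Let $\phi$ be a Lipschitz test function touching $v - u$ from below at $t_0$ strictly in some interval $[t_0 - r, t_0]$. Using the $\Gamma$-convergence of Proposition \ref{p:jensen2} together with the ordering $u^{\e} \geq u + \e$ and $v_{\e} \leq v - \e$ (so that $v_{\e} - u^{\e} \leq v - u - 2\e$), and perturbing to $\phi - \eta(t - t_0)^2 - \delta_{\e}$, I would produce points $t_{\e} \to t_0$ at which the perturbed test function touches $v_{\e} - u^{\e}$ from below. Applying the viscosity inequality for $v_{\e} - u^{\e}$ at $t_{\e}$ and then sending first $\e \to 0$ (using continuity of $f$, $g$, $\phi$ and dominated convergence on the Marchaud integral, whose tail is uniformly controlled by the ellipticity bound \eqref{e:alphabound} and the $L^{\infty}$ bounds on $u,v$) and afterwards $\eta, \delta_{\e} \to 0$, gives $\D_t^{\alpha} \phi(t_0) \geq g(t_0) - f(t_0)$.

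The main obstacle is this final limit passage, since $v - u$ is only lower semi-continuous and Lemma \ref{l:limit} does not apply directly; the test-function perturbation compatible with $\Gamma$-convergence is the one delicate piece of the argument. Every other step is essentially immediate because $\D_t^{\alpha}$ is linear, in sharp contrast to the nonlinear spatial operator $\mathcal{I}$ appearing in \eqref{e:main}.
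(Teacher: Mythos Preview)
Your overall strategy matches the paper's proof exactly: regularize by $u^{\e},v_{\e}$, exploit linearity of $\D_t^{\alpha}$ to get the classical inequality for $v_{\e}-u^{\e}$, then pass to the limit. There is, however, one concrete error in your execution.

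\textbf{The Lipschitz claim is false.} You assert that $u^{\e}$ and $v_{\e}$ are Lipschitz on $(-\infty,T]$, and from this you let them ``touch themselves'' from either side. With the \emph{linear} penalty in \eqref{e:jensenup}--\eqref{e:jensendown} this is not true when $u$ is merely upper semi-continuous and $v$ merely lower semi-continuous. What Proposition~\ref{p:jensen1}(3) actually gives is the \emph{one-sided} bound
\[
u^{\e}(t_2)-u^{\e}(t_1)\ \geq\ -\e^{-1}(t_2-t_1)\qquad (t_1<t_2),
\]
which says only that at every point $u^{\e}$ is touched \emph{from above} by a line of slope $-1/\e$; there is no control from below, and $u^{\e}$ may even fail to be continuous. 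The analogous property for $v_{\e}$ gives touching \emph{from below}. This one-sided touching is precisely what the paper invokes, and it is enough: it makes $\D_t^{\alpha}u^{\e}$ well defined (possibly $-\infty$), $\D_t^{\alpha}v_{\e}$ well defined (possibly $+\infty$), and $v_{\e}-u^{\e}$ touched from below at every point so that Propositions~\ref{p:classic} and \ref{p:classic2} apply to it as well. Once you replace ``Lipschitz'' by ``touched from the correct side by an affine function'' your steps (a)--(c) go through verbatim.

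\textbf{On the limit.} You are right to be uneasy about citing Lemma~\ref{l:limit}: as stated it asks for a continuous limit, whereas $v-u$ is only lower semi-continuous. The paper simply invokes Lemma~\ref{l:limit} at this point; your test-function perturbation compatible with $\Gamma$-convergence is the honest way to close the argument. Note, though, that $\Gamma$-convergence is not additive, so you should argue directly that $v_{\e}-u^{\e}\leq v-u-2\e$ (from $u^{\e}\geq u+\e$, $v_{\e}\leq v-\e$) forces any touching point $t_{\e}$ of $\phi-\eta(t-t_0)^2-\delta_{\e}$ to converge to $t_0$, rather than relying on separate $\Gamma$-limits of $-u^{\e}$ and $v_{\e}$.
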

  
  \begin{proof}
   We take the approximating solutions $u^{\e},v_{\e}$ with $\D_t^{\alpha} u^{\e}\leq f + d_{\e}$ and $\D_t^{\alpha} v_{\e} \geq f - d_{e}$. 
   From property $(3)$ in Proposition \ref{p:jensen1} we have that at every point $u^{\e}$ can be touched from above by a Lipschitz function
   and $v_{\e}$ can be touched from below. Then also $v_{\e}-u^{\e}$ can be touched from below by a Lipschitz function at every point. 
   From Propositions \ref{p:classic} and \ref{p:classic2}, the terms $\D_t^{\alpha} u^{\e}, \D_t^{\alpha} v_{\e}, D_t^{\alpha} (v_{\e}-u^{\e})$ are well defined and 
   at any point $t$
    \[
     \D_{t}^{\alpha} (v_{\e} - u^{\e}(t)) = \D_t^{\alpha} v_{\e}(t) - \D_t^{\alpha} u^{\e}(t) \geq g -f - 2d_{\e}. 
    \]
   Then from Proposition \ref{p:classic2} we conclude that $\D_{t}^{\alpha} (v_{\e} - u^{\e}) \geq g-f-2d_{\e}$. Letting $\e \to 0$
   we obtain from Lemma \ref{l:limit} that $\D_{t}^{\alpha} (v - u) \geq g-f$ in the viscosity sense. 
  \end{proof}
  
  We are now able to prove a comparison principle. 
  \begin{theorem}  \label{t:comparison}
   Let $u$ be bounded and upper-semicontinuous and $v$ be bounded and lower semi-continuous on $(-\infty,T_2]$.  
   Let $f$ be a continuous function and assume that $\D_t^{\alpha} u \leq f \leq \D_t^{\alpha} v$ on $(T_1,T_2]$ with 
   $u \leq v$ on $(-\infty,T_1]$. Then $u\leq v$ on $(-\infty,T_2]$, and if $u(t_0)=v(t_0)$ for some $t_0 \in (T_1,T_2]$, 
   then $u(t)=v(t)$ for all $t \leq t_0$. 
  \end{theorem}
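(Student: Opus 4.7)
The plan is to reduce the comparison principle to a minimum-principle argument for the single function $\tilde w := v - u$, and then use the already-established Lemma \ref{l:jensen} together with the simplest possible (constant) test function in the viscosity definition.

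First, I would apply Lemma \ref{l:jensen} with $f=g$ to deduce that $\D_t^{\alpha}\tilde w \geq 0$ on $(T_1,T_2]$ in the viscosity sense. Note that $\tilde w$ is lower semicontinuous and bounded on $(-\infty,T_2]$, and the hypothesis gives $\tilde w \geq 0$ on $(-\infty,T_1]$. The goal becomes showing $\tilde w \geq 0$ on all of $(-\infty,T_2]$.

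Suppose not: $m := \inf_{[T_1,T_2]}\tilde w <0$. By lower semicontinuity of $\tilde w$ on the compact interval $[T_1,T_2]$ the infimum is attained at some $t_0\in[T_1,T_2]$, and since $\tilde w(T_1)\geq 0 > m$, we must have $t_0\in(T_1,T_2]$. The constant function $\phi(t)\equiv \tilde w(t_0)$ is Lipschitz and touches $\tilde w$ from below at $t_0$ on any interval $[t_0-\e,t_0]$ (it is $\leq \tilde w$ everywhere on $(-\infty,t_0]$ by the minimality of $t_0$ together with $\tilde w\geq 0 > \tilde w(t_0)$ on $(-\infty,T_1]$). Viscosity supersolution applied with this test function, together with Propositions \ref{p:classic} and \ref{p:classic2}, yields
\[
0 \leq \int_{-\infty}^{t_0-\e}[\tilde w(t_0)-\tilde w(s)]\K(t_0,s)\,ds + \int_{t_0-\e}^{t_0}[\phi(t_0)-\phi(s)]\K(t_0,s)\,ds.
\]
The second integral vanishes, and on $(-\infty,T_1]$ the integrand of the first is $\leq \tilde w(t_0)-0 = m < 0$ on a set of positive $\K$-measure (using the lower bound in \eqref{e:alphabound}); on the rest the integrand is $\leq 0$. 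Thus the right-hand side is strictly negative, which is the desired contradiction. Hence $\tilde w\geq 0$, i.e.\ $u\leq v$, on $(-\infty,T_2]$.

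For the equality statement, assume $u(t_0)=v(t_0)$ at some $t_0\in(T_1,T_2]$. Then $\tilde w(t_0)=0$ is a global minimum, so I repeat the previous argument with $\phi\equiv 0$. The same inequality becomes
\[
\int_{-\infty}^{t_0-\e}\tilde w(s)\,\K(t_0,s)\,ds \leq 0
\]
for every $\e>0$. Since $\tilde w\geq 0$ and $\K>0$, this forces $\tilde w(s)=0$ for a.e.\ $s<t_0$; lower semicontinuity of $\tilde w$ then upgrades this to $\tilde w\equiv 0$ on $(-\infty,t_0]$, giving $u\equiv v$ there.

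The main obstacle, though modest, is verifying that the constant function qualifies as an admissible test function and that the viscosity inequality truly applies at an endpoint such as $t_0=T_2$. This is handled by Proposition \ref{p:classic2}, which extends the admissible class to Lipschitz functions and confirms that $\D_t^{\alpha}\tilde w(t_0)$ is well defined under this one-sided touching. The only subtlety is ensuring that the negative contribution on $(-\infty,T_1]$ survives the limit $\e\to 0$ — but it does because the $\K$-measure of $(-\infty,T_1]$ viewed from $t_0$ is strictly positive by \eqref{e:alphabound}, independent of $\e$.
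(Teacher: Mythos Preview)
Your argument is correct and follows essentially the same route as the paper: reduce to $\tilde w = v-u$ via Lemma \ref{l:jensen}, touch from below at the minimum with a constant, invoke Propositions \ref{p:classic} and \ref{p:classic2} to evaluate $\D_t^{\alpha}\tilde w(t_0)$ classically, and read off the contradiction (or equality) from the sign of the integrand. The paper's proof is just a slightly terser version of the same idea, combining the strict-inequality and equality cases into a single dichotomy rather than treating them separately as you do.
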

  
  \begin{proof}
   Suppose there exists $t_1 \in (T_1,T_2]$ such that $v(t_1)-u(t_1)\leq 0$. Let $v-u$ achieve its minimum in $[T_1,T_2]$ at $t_0 \in (T_1,T_2]$. 
   From Lemma \ref{l:jensen}, we have $\D_{t}^{\alpha} (v-u)\geq 0$ in the viscosity sense. Since $v-u \geq 0$
   for $t\leq T_1$, then $(v-u)$ is touched from below by the constant function $v(t_0)-u(t_0)$ at $t_0$. 
   From Propositions \ref{p:classic} and \ref{p:classic2} the term $\D_t^{\alpha} (v-u)(t_0)$ is well defined and 
    \[
     \int_{-\infty}^{t_0} [(v-u)(t_0) - (v-u)(s)]\K(t,s) \ ds \geq 0.
    \]  
   Since $(v-u)$ achieves a minimum over $(-\infty,t_0]$ at $t_0$ we also have 
    \[
     0 \geq \int_{-\infty}^{t_0} [(v-u)(t_0) - (v-u)(s)]\K(t,s) \ ds.    
    \] 
   Then if $(v-u)(t_0) \leq 0$ we have that $(v-u)(t)=0$ for all $t\leq t_0$. Then either $(v-u)(t_0)>0$ or 
   $(v-u)(t)=0$ for all $t\leq t_0$. 
  \end{proof}

  With a comparison principle available, we may use Perron's method to prove existence of solutions. 
  
  \begin{theorem}   \label{t:exist}
   Let $\phi(t)$ be continuous on $(-\infty,T_1]$ and $f$ continuous on $[T_1,T_2]$. There exists a unique viscosity solution $u$
   to 
    \[
     \begin{cases}
      \D_t^{\alpha} u(t) = f(t) &\text{ for } t\in [T_1,T_2] \\
      u(t)=\phi(t) &\text{ if } t\leq T_1
      \end{cases}
    \]
    on $(T_1,T_2]$.
  \end{theorem}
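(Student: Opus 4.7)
The proof splits into uniqueness and existence. Uniqueness is immediate from Theorem~\ref{t:comparison}: two solutions $u,v$ both satisfy $\D_t^\alpha u \le f \le \D_t^\alpha v$ on $(T_1,T_2]$ with $u = v = \phi$ on $(-\infty,T_1]$, so the theorem applied in each direction forces $u \equiv v$. For existence my plan is Perron's method, adapted to the nonlocal Marchaud-type setting.

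First I would construct explicit sub- and super-solution barriers. Put $M := \|f\|_{L^\infty}$ and, for a large constant $C$ to be chosen, set
\[
\bar u(t) :=
\begin{cases} \phi(t) & \text{if } t \le T_1,\\ \phi(T_1) + C & \text{if } t \in (T_1, T_2].\end{cases}
\]
Then $\bar u$ is bounded and lower semi-continuous, and for $t > T_1$ one computes
\[
\D_t^\alpha \bar u(t) \;=\; \int_{-\infty}^{T_1}\bigl[C + \phi(T_1) - \phi(s)\bigr]\K(t,s)\,ds \;\ge\; c_1\,C\,(t - T_1)^{-\alpha} - c_2,
\]
using the lower bound on $\K$ from \eqref{e:alphabound}, with $c_1,c_2$ depending only on $\alpha,\lambda,\|\phi\|_{L^\infty}$. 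Taking $C$ large enough gives $\D_t^\alpha \bar u \ge M \ge f$ throughout $(T_1,T_2]$, so $\bar u$ is a viscosity supersolution. A symmetric construction with $-C$ produces an upper semi-continuous subsolution $\underline u \le \bar u$ that also matches $\phi$ on $(-\infty,T_1]$.

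With these barriers in hand, I would set
\[
u(t) := \sup\bigl\{ v(t) : v \text{ USC},\ \underline u \le v \le \bar u,\ v \equiv \phi \text{ on } (-\infty,T_1],\ \D_t^\alpha v \le f \text{ in the viscosity sense}\bigr\},
\]
and denote its USC and LSC envelopes by $u^*$, $u_*$. The standard two-step Perron argument would then run: (i)~$u^*$ is a viscosity subsolution, since pointwise suprema of subsolutions remain subsolutions after USC regularization (a stability fact that I would deduce from a maximizing sequence together with Lemma~\ref{l:limit} and Proposition~\ref{p:classic2}); and (ii)~$u_*$ is a viscosity supersolution, proved by contradiction via a bump argument. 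Applying Theorem~\ref{t:comparison} to the pair $u^* \le u_*$ (both carrying the data $\phi$ on $(-\infty,T_1]$) and combining with the trivial $u_* \le u \le u^*$ forces $u = u^* = u_*$, whence $u$ is continuous and is the desired solution.

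The hard part is the bump argument in step~(ii). Suppose a smooth $\psi$ touched $u_*$ from above at some $t_0 \in (T_1,T_2]$ with $\D_t^\alpha \psi(t_0) < f(t_0)$. By the $\liminf$ definition, one can pick $t_n \to t_0$ with $u(t_n) \to u_*(t_0) = \psi(t_0)$; then replacing $u$ by $\max(u, \psi + \eta)$ on a short interval $[t_0 - \epsilon, t_0]$ for a suitably small $\eta > 0$ should yield a new member of the Perron family strictly exceeding $u$ at some $t_n$, a contradiction. Because $\D_t^\alpha$ has a nonintegrable singularity at $s = t$, the verification that the perturbed function remains a viscosity subsolution is the delicate point: one uses continuity of $f$, Proposition~\ref{p:classic2} to evaluate $\D_t^\alpha$ pointwise on the perturbed interval, and the fact that the tail contribution comes from $u$ itself, which is unchanged by the perturbation. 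This mirrors the classical Perron argument of \cite{cs09} in the spatial nonlocal setting, transplanted to the time direction.
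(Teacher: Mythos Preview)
Your approach is correct and matches the paper: the paper itself forgoes the proof entirely, declaring it ``standard'' and pointing to Lemma~\ref{l:zorn} for the relevant ideas, which is precisely a Perron-type construction. The only difference is organizational: Lemma~\ref{l:zorn} phrases Perron's method through Zorn's lemma applied to a partially ordered family of Lipschitz subsolutions (with the bump supplied by the explicit cutoff $\eta_\epsilon$), whereas you use the classical sup-over-subsolutions formulation with USC/LSC envelopes and the Ishii bump argument. Both routes are standard and yield the same result; the Zorn's-lemma phrasing has the mild advantage that it works entirely within Lipschitz functions (where $\D_t^\alpha$ is classically defined and continuous by Lemma~\ref{l:cont}), sidestepping the envelope machinery.

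One point deserves care in your version: your barriers $\bar u,\underline u$ jump by $\pm C$ at $T_1$, so they do not by themselves force the Perron function to attain $\phi(T_1)$ continuously from the right. Without that, you cannot guarantee $u^*\le u_*$ at $t=T_1$ when invoking Theorem~\ref{t:comparison}, since $u^*(T_1)$ may pick up $\limsup_{t\to T_1^+}u(t)>\phi(T_1)$. This is a routine boundary-regularity issue in Perron's method, fixable for instance by refining the barriers near $T_1$ (e.g.\ using $\phi(T_1)\pm C(t-T_1)_+^\alpha$, whose $\D_t^\alpha$ picks up a positive constant from the tail integral over $(-\infty,T_1]$) or by a separate local barrier argument at the initial time; the paper does not address it either, but it should be noted in a complete write-up.
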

  
  The proof of Theorem \ref{t:exist} is standard. We forgo the proof since the ideas and methods are later used in the proof of Lemma \ref{l:zorn}. 
  One may also show existence for solutions of \eqref{e:main}; however, since the main focus of this paper is the regularity of solutions we do not include 
  the result here.

  Later in the paper we will need that the continuous divergence solutions constructed in \cite{acv16,acv17} are
  also viscosity solutions. 
  
  \begin{theorem}   \label{t:divnondiv}
   Let $f$ be a continuous function on $[-2,0]$ with $f(-2)=0$. 
   Assume the kernel $\K(t,s)$ of $\D_t^{\alpha}$ satisfies \eqref{e:timediv} and \eqref{e:alphabound}. 
   Then the weak divergence solution $v$ constructed in \cite{acv16} of  
    \begin{equation}   \label{e:divint1}
     \begin{cases}
     \D_t^{\alpha} v = f  &\text{ in } (-\infty,0].    \\
     v=0                        &\text{ for } t\leq -2,
     \end{cases}
    \end{equation}
    is also a viscosity solution and hence the unique viscosity solution.   
  \end{theorem}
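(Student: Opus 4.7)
The strategy is to use uniqueness of both solution concepts. By Theorem \ref{t:exist} there exists a unique viscosity solution $u$ to \eqref{e:divint1}, and by \cite{acv16,2acv16} the weak divergence solution $v$ of \eqref{e:divint1} is also unique in its class. It therefore suffices to show that the viscosity solution $u$ is itself a weak divergence solution: then $u=v$, so $v$ inherits the viscosity property, and its uniqueness follows from Theorem \ref{t:comparison}.

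To see that $u$ is a weak divergence solution, I would regularize $u$ via the sup- and inf-convolutions $u^\e,u_\e$ from \eqref{e:jensenup}--\eqref{e:jensendown}. By Propositions \ref{p:jensen1} and \ref{p:jensen2}, these are uniformly bounded, converge uniformly to $u$, are one-sided Lipschitz by property $(3)$, and satisfy $\D_t^\alpha u^\e \le f + d_\e$ and $\D_t^\alpha u_\e \ge f - d_\e$ in the viscosity sense with $d_\e \to 0$. Propositions \ref{p:classic} and \ref{p:classic2} then upgrade these one-sided bounds to pointwise classical inequalities wherever the appropriate Lipschitz touching holds. Multiplying by a nonnegative test function $\phi\in C_c^\infty((T_1,T_2))$, applying Fubini (justified by the one-sided Lipschitz control and \eqref{e:alphabound}), and symmetrizing the resulting double integral via \eqref{e:timediv} produces the standard weak bilinear inequality
\[
\tfrac{1}{2}\iint [u^\e(t)-u^\e(s)][\phi(t)-\phi(s)]\,\K(t,s)\,ds\,dt \;\le\; \int (f+d_\e)\,\phi\,dt.
\]
The analogous computation for $u_\e$ gives the reverse inequality. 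Sending $\e\to 0$, using the uniform convergence $u^\e,u_\e\to u$ from Proposition \ref{p:jensen2} together with dominated convergence, yields that $u$ satisfies the weak divergence formulation used in \cite{acv16,2acv16}. Since the weak solution is unique, $u=v$.

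The main obstacle is controlling the double integral near the diagonal $s=t$, where $\K$ is singular of order $1+\alpha$, and near the initial time $t=-2$. The one-sided Lipschitz bound from Proposition \ref{p:jensen1}$(3)$ gives $|u^\e(t)-u^\e(s)|\le \e^{-1}|t-s|$ on one side and the uniform $L^\infty$ bound on the other, producing integrable majorants that combine with the tail bound $|u^\e(t)-u^\e(s)|\le 2\|u\|_{L^\infty}$ to handle the $(t-s)^{-1-\alpha}$ singularity in \eqref{e:alphabound}. The hypothesis $f(-2)=0$ ensures continuity of both $u$ and $v$ at $t=-2$ with matching trivial data, so no boundary correction is needed when passing to the limit.
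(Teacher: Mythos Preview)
Your approach is genuinely different from the paper's and, with some additional care, should work. The paper argues in the \emph{opposite} direction: it recalls that the weak divergence solution $v$ of \cite{acv16,2acv16} is constructed as a limit of solutions $v_\e$ to the discrete recursion
\[
\e \sum_{i<j}[v_\e(\e j)-v_\e(\e i)]\,\K(\e j,\e i)=f(\e j),
\]
and then checks directly that any Lipschitz $\psi$ touching $v$ strictly from above at $t_0$ satisfies $\D_t^\alpha \psi(t_0)\ge f(t_0)$ by passing the discrete inequality to the limit. So the paper shows $v$ is a viscosity solution without ever invoking uniqueness of weak solutions; you instead show the viscosity solution is weak and appeal to weak uniqueness in \cite{acv16}. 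The paper's route is self-contained and exploits the explicit construction of $v$; yours is more conceptual and would transfer to any situation where both solution concepts have comparison principles.

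One point that deserves more care in your sketch: the ``standard weak bilinear inequality'' you write down is not quite the weak formulation \eqref{e:divlimit} used in \cite{acv16,2acv16}. Because the kernel is one-sided ($s<t$ only), symmetrizing $\int \phi\,\D_t^\alpha u^\e$ via \eqref{e:timediv} does not collapse to a clean $\tfrac12\iint$ over a symmetric region; it produces the boundary terms appearing in \eqref{e:divlimit}, and the test functions there are bounded Lipschitz on $(-\infty,0]$, not compactly supported. To invoke uniqueness from \cite{acv16} you must land in exactly that formulation, so the integration-by-parts step needs to be carried out in full rather than asserted. The one-sided Lipschitz bound from Proposition~\ref{p:jensen1}(3) does give the integrability you claim near the diagonal, so this is a matter of bookkeeping rather than a missing idea.
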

   
  \begin{proof}
   In \cite{acv16,acv17} the solution $v$ is constructed with an $\e$ approximation.
   Via recursion we find the solution $v_{\e}$ to  
    \begin{equation}   \label{e:discrete}
     \e \sum_{i<j} [v_{\e}(\e j) - v_{\e}(\e i)]\K(\e j, \e i) = f(\e j),
    \end{equation}
   where $i,j \in \Z$. From \cite{acv16,acv17}, $v_{\e} \to v$ where $v$ solves the divergence form equation
    \begin{equation}  \label{e:divlimit}
     \begin{aligned}
      &\int_{-\infty}^T \int_{-\infty}^t [v(t)-v(s)][\phi(t)-\phi(s)]\K(t,s) \ ds \ dt \\
      & \ + \int_{-\infty}^T \int_{-\infty}^{2t-T}v(t)\phi(t)\K(t,s) \ ds \ dt 
       - \int_{-\infty}^T v(t)\D_t^{\alpha} \phi(t) \ dt \\
      &= \int_{-\infty}^T f(t)\phi(t) \ dt, \\
     \end{aligned}
    \end{equation}
    for all $T\leq 0$ and $\phi$ bounded and Lipschitz on $(-\infty,0]$. 
    We extend $v_{\e}$ to all of $(-\infty,0]$ by $v_{\e}(t)=v(\e i)$ where $\e (i-1) < t \leq \e j$.
    We now show that $v$ is a viscosity solution. The main idea in the following computations is that since \eqref{e:discrete}
    is a discrete equation, then $v_{\e}$ is also a viscosity-type discretized solution, and so the limit
    will also be a viscosity solution. 
    We now let $\psi$ be a Lipschitz function touching $v$
    strictly from above at $t_0 \leq 0$, that is $v(t)<\psi(t)$ for $t<t_0$ and $v(t_0)=\psi(t_0)$. 
    We let $\delta_{\e}$ be such that $\psi+\delta_{\e}$ touches $v_{\e}$ from above at $t_{\e} \leq t_0$. 
    Since $\psi$ touches $v$ strictly from above at $t_0$, then $t_{\e} \to t_0$ as $\e \to 0$. 
    Now If $\e (j_{\e}-1)< t_{\e} \leq \e j_{\e}$, then since $\psi + \delta_{\e}$ touches $v_{\e}$ from above and 
    \eqref{e:discrete} is a discrete equation, we have that 
     \[
      \e \sum_{i<j_{\e}} [\psi(t_{\e}) - \psi(\e i)]\K(\e j_{\e}, \e i) \geq f(\e j_{\e}).
     \]
     Now
     \[
      \begin{aligned}
      \e \sum_{i<j_{\e}} [\psi(t_{\e}) - \psi(\e i)]\K(\e j_{\e}, \e i) &=
       \e \sum_{i<j_{\e}} [\psi(t_{\e}) - \psi(\e j_{\e})]\K(\e j_{\e}, \e i)\\
       &\quad + \e \sum_{i<j_{\e}} [\psi(\e j_{\e}) - \psi(\e i)]\K(\e j_{\e}, \e i) \\
       &= (I) + (II). 
      \end{aligned}
     \]
     Since $\psi$ is Lipschitz we have
      \[
       \begin{aligned}
       |(I)| &= \left| \e \sum_{i<j_{\e}} [\psi(t_{\e}) - \psi(\e j_{\e})]\K(\e j_{\e}, \e i) \right| \\
             & \leq \e  \sum_{i<j_{\e}} [C \e ]\K(\e j_{\e}, \e i) \\
             & \leq \e^2 \sum_{i<j_{\e}}\Lambda \e^{-1-\alpha} (j_{\e}-i)^{-1-\alpha} \\
             & = \e^{1-\alpha} \Lambda \sum_{i=1}^{\infty} i^{-1-\alpha}.
       \end{aligned}
      \]
     Then $|(I)| \to 0$ as $\e \to 0$. Since $\psi$ is Lipschitz continuous it follow that $(II) \to \D_t^{\alpha}(t_0)$
     as $\e \to 0$. Since $f$ is continuous and $t_{\e} \to t_0$ as $\e \to 0$, we have that 
     $f(\e j_{\e}) \to f(t_0)$. Then $\D_t^{\alpha} \psi(t_0) \geq f(t_0)$. The proof if $\psi$ touches from below is similar. 
     Then $v$ is a viscosity solution. 
  \end{proof}
  
  This next Corollary will be useful when we want to show a limit is not equivalently zero. 
  \begin{corollary}   \label{c:divint}
   Let $v$ be the viscosity solution to \eqref{e:divint1} as in Theorem \ref{t:divnondiv}.  Assume also that $v \geq 0$. Then
   for $-2 <t\leq 0$ we have 
    \begin{equation}  \label{e:divint2}
    \frac{\lambda\alpha}{\Gamma(1-\alpha)} \int_{-2}^{t} \frac{v(s)}{(t-s)^{\alpha}} \ ds 
    \leq \int_{-2}^{t} f(s) \ ds \leq  \frac{\Lambda\alpha}{\Gamma(1-\alpha)} \int_{-2}^{t} \frac{v(s)}{(t-s)^{\alpha}} \ ds.   
    \end{equation}
  \end{corollary}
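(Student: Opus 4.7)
The plan is to exploit Theorem~\ref{t:divnondiv}, which identifies the viscosity solution $v$ of \eqref{e:divint1} with the weak divergence solution constructed in \cite{acv16,2acv16}. Being a divergence solution, $v$ satisfies the identity \eqref{e:divlimit} against every bounded Lipschitz test function $\phi$ on $(-\infty,0]$. The observation to make is that the single choice $\phi\equiv 1$ collapses that bilinear identity to a useful one-sided weighted integral, from which both inequalities in \eqref{e:divint2} fall out by the pointwise bounds \eqref{e:alphabound}.

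Concretely, I would substitute $\phi\equiv 1$ into \eqref{e:divlimit}. The first double integral vanishes because $\phi(t)-\phi(s)\equiv 0$, and the last single integral vanishes because $\mathcal{D}_t^{\alpha}1\equiv 0$. What survives is the identity
\[
\int_{-\infty}^T v(t)\left(\int_{-\infty}^{2t-T}\mathcal{K}(t,s)\,ds\right)dt \;=\; \int_{-\infty}^T f(t)\,dt,
\]
for every $T\leq 0$.

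Next I would evaluate, or rather bound, the inner weight $w(t,T):=\int_{-\infty}^{2t-T}\mathcal{K}(t,s)\,ds$. The change of variable $u=t-s$ gives $\int_{-\infty}^{2t-T}(t-s)^{-1-\alpha}\,ds=\alpha^{-1}(T-t)^{-\alpha}$, so the two-sided hypothesis \eqref{e:alphabound} yields
\[
\frac{\lambda (T-t)^{-\alpha}}{\Gamma(1-\alpha)} \;\leq\; w(t,T) \;\leq\; \frac{\Lambda (T-t)^{-\alpha}}{\Gamma(1-\alpha)}.
\]
Combining this with the previous display, using $v\geq 0$, and restricting the outer integral to $[-2,T]$ since $v\equiv 0$ on $(-\infty,-2]$, delivers \eqref{e:divint2} after relabeling $T$ as $t$ (the constants then match those displayed in the statement up to the standard $\alpha/\Gamma(1-\alpha)$ normalization carried by the Marchaud kernel).

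The only genuine subtlety — and the step one has to see before the rest becomes mechanical — is the choice of test function. Compactly supported $\phi$ would simultaneously kill the bilinear piece and the boundary-type term $\int_{-\infty}^{2t-T}$, producing only the trivial identity $0=0$. It is precisely the nonstandard boundary integral present in the weak formulation of \cite{acv16,2acv16} (a consequence of the symmetry hypothesis \eqref{e:timediv}) that carries the one-sided weight $(T-t)^{-\alpha}$ once the constant test function has cancelled the bilinear term, and this is what turns a weak identity into the pointwise-in-$t$ estimate asserted by the corollary.
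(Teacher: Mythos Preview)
Your proposal is correct and follows essentially the same route as the paper: substitute $\phi\equiv 1$ into the weak divergence identity \eqref{e:divlimit}, observe that only the boundary-type term survives, and then bound the inner kernel integral via \eqref{e:alphabound} using $v\geq 0$. Your additional remarks about why the constant test function is the right choice are accurate commentary but do not constitute a different argument.
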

  
  \begin{proof}
   We take $\phi \equiv 1$ in \eqref{e:divlimit}. 
   Then 
    \[
     \begin{aligned}
      \int_{-2}^{T} f(t) \ dt &= \int_{-\infty}^{T}\int_{-\infty}^{2t-T} v(t) K(t,s) \ ds \ dt \\
                                      &\leq \frac{\alpha \Lambda}{\Gamma(1-\alpha)} \int_{-\infty}^{2t-T} \int_{-\infty}^t \frac{v(t)}{(t-s)^{1+\alpha}} \ ds \ dt \\
                                     &=  \frac{\alpha\Lambda}{\Gamma(1-\alpha)} \int_{-2}^{T} \frac{v(t)}{(T-t)^{\alpha}} \ dt.   
     \end{aligned}
    \]
    The bound from the other side is shown in the same manner. 
  \end{proof}

\section{H\"older continuity for the time derivative}  \label{s:holder}
 As explained in the introduction, since the integration for the generalized Marchaud derivative only occurs from the left, the results contained in 
 \cite{krs14} and \cite{ss16} do not cover Theorem \ref{t:main}. 
 We will follow the ideas in \cite{cs09} to prove the H\"older continuity. The outline and statements of the Lemmas are intentionally similar to those
 in Sections $8$-$12$ in \cite{cs09} so that the reader may compare and contrast properties of the operator $\D_t^{\alpha}$ with properties of $\mathcal{L}$. 
 Rather than work with the concave envelope we will work with a different envelope. 
  \[
   \M_{u}(t) := \sup_{s\leq t} u(s). 
  \]
 
 \begin{lemma}  \label{l:meas}
  Assume $M_{\alpha}^- u(t) \leq f(t)$ and $\M_u(t_0)=u(t_0)$. Let $r_k :=2^{-1/(1-\alpha)}2^{-k}$ and $R_k(t):= [t-r_k,t-r_{k+1}]$.
  There is a constant $C_0$ depending on $\lambda,\Lambda$ but not $\alpha$ such that for any $M>0$, there exists $k$ such that
   \begin{equation}  \label{e:upbound}
    |R_k(t_0) \cap \{u(s)<u(t_0)-Mr_k\}|\leq C_0 \frac{f(t_0)}{M} |R_k(t_0)|.
   \end{equation}
 \end{lemma}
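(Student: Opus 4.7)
The plan is to argue by contradiction, exploiting the fact that the envelope condition $\M_u(t_0) = u(t_0)$ forces $u(s) \leq u(t_0)$ for every $s \leq t_0$, so that $(u(t_0)-u(s))_- = 0$. Consequently the hypothesis $M_{\alpha}^- u(t_0) \leq f(t_0)$ collapses to the one-sided, nonnegative-integrand bound
\[
\frac{\alpha \lambda}{\Gamma(1-\alpha)} \int_{-\infty}^{t_0} \frac{u(t_0) - u(s)}{(t_0 - s)^{1+\alpha}} \, ds \leq f(t_0).
\]
This is the only quantitative input; everything else is bookkeeping on the dyadic decomposition of $(-\infty, t_0)$ into the disjoint rings $R_k(t_0)$.

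Next, suppose for contradiction that \eqref{e:upbound} fails for every $k \geq 0$. On the exceptional set $R_k(t_0) \cap \{u(s) < u(t_0) - Mr_k\}$ we have $u(t_0) - u(s) > Mr_k$ and $(t_0-s)^{1+\alpha} \leq r_k^{1+\alpha}$, so the integrand is at least $M/r_k^\alpha$. Since $|R_k(t_0)| = r_k - r_{k+1} = r_k/2$, the contradiction hypothesis yields
\[
\int_{R_k(t_0) \cap \{u < u(t_0) - Mr_k\}} \frac{u(t_0)-u(s)}{(t_0-s)^{1+\alpha}} \, ds \;>\; \frac{M}{r_k^\alpha} \cdot C_0 \frac{f(t_0)}{M} \cdot \frac{r_k}{2} \;=\; \frac{C_0 f(t_0)}{2} \, r_k^{1-\alpha}.
\]
Summing over $k \geq 0$ and using disjointness of the $R_k(t_0)$ together with the bound above produces
\[
\frac{C_0 f(t_0)}{2} \sum_{k=0}^{\infty} r_k^{1-\alpha} \;\leq\; \frac{\Gamma(1-\alpha)}{\alpha \lambda} f(t_0).
\]

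With the prescribed choice $r_k = 2^{-1/(1-\alpha)} 2^{-k}$ we obtain $r_k^{1-\alpha} = \tfrac{1}{2}\, 2^{-k(1-\alpha)}$, so the geometric series evaluates to $\sum_k r_k^{1-\alpha} = [2(1 - 2^{-(1-\alpha)})]^{-1}$. Rearranging then forces
\[
C_0 \;\leq\; \frac{4 \, \Gamma(1-\alpha)\bigl(1 - 2^{-(1-\alpha)}\bigr)}{\alpha \lambda},
\]
which is false as soon as $C_0$ is chosen larger than the right-hand side, giving the desired contradiction.

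The delicate point is verifying that the upper bound above admits a choice of $C_0$ uniform in $\alpha$ (at least as $\alpha \to 1$, consistent with Theorem \ref{t:main}). Using $\Gamma(1-\alpha) \sim 1/(1-\alpha)$ and $1 - 2^{-(1-\alpha)} \sim (1-\alpha)\log 2$ as $\alpha \to 1$, the bound tends to the finite limit $4\log 2/\lambda$. This explains the otherwise mysterious prefactor $2^{-1/(1-\alpha)}$ in the definition of $r_k$: it precisely calibrates the geometric series so that its divergence cancels the blow-up of $\Gamma(1-\alpha)$, producing a bound on $C_0$ stable in the elliptic limit. A direct check of the smooth function $\alpha \mapsto \Gamma(1-\alpha)(1 - 2^{-(1-\alpha)})/\alpha$ then shows it is bounded on any interval $[\alpha_*, 1]$, so a single $C_0 = C_0(\lambda)$ (uniform as $\alpha \to 1$) suffices.
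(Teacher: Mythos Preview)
Your proof is correct and follows the same strategy as the paper: argue by contradiction, use the envelope condition $\M_u(t_0)=u(t_0)$ to make the integrand in $M_\alpha^-$ nonnegative (the paper invokes Proposition~\ref{p:classic2} here to justify classical evaluation, since the constant $\M_u(t_0)$ touches $u$ from above at $t_0$), then bound the contribution of each dyadic ring $R_k$ from below and sum the geometric series to force an upper bound on $C_0$. Your asymptotic check that $\Gamma(1-\alpha)(1-2^{-(1-\alpha)})/\alpha$ stays bounded as $\alpha\to 1$ is exactly the paper's point, which it phrases via the reflection formula $\Gamma(z)\Gamma(1-z)=\pi/\sin(\pi z)$.
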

 
 \begin{proof}
  We first note that $|R_k(t_0)|=r_{k+1}=r_k/2$. Since $\M_u(t_0)=u(t_0)$, then $u$ is touched from above by the constant $\M_u(t_0)$. Then from Proposition \ref{p:classic2} we have
   \[
    \int_{-\infty}^{t_0} [u(t_0)-u(s)]K(t_0,s) ds \leq f(t_0). 
   \]
  Suppose that the conclusion is not true. It then follows that
   \[
    \begin{aligned}
     f(t_0) \geq M_{\alpha}^- u(t_0) &\geq \frac{\alpha \lambda}{\Gamma(1-\alpha)} \int_{-\infty}^{t_0} \frac{u(t_0)-u(s)}{(t_0 - s)^{1+\alpha}} ds \\
                              &\geq \frac{\alpha \lambda}{\Gamma(1-\alpha)} \int_{t_0-2^{-1/(1-\alpha)}}^{t_0} \frac{u(t_0)-u(s)}{(t_0 - s)^{1+\alpha}} ds \\                                     
                              &\geq \frac{\alpha \lambda}{\Gamma(1-\alpha)} \sum_{k=0}^{\infty} \int_{R_k(t_0)} \frac{u(t_0)-u(s)}{r_{k+1}^{1+\alpha}} ds \\
                                     &\geq \frac{\alpha \lambda}{\Gamma(1-\alpha)} M r_k \frac{C_0f(t_0)}{M} \frac{|R_k(t_0)|}{r_{k+1}^{1+\alpha}} \\
                                     &= 2^\alpha \frac{\alpha \lambda}{\Gamma(1-\alpha)} C_0 f(t_0) \sum_{k=0}^{\infty} r_k^{1-\alpha}.
    \end{aligned}
   \]
  Now
   \[
    \sum_{k=0}^{\infty} r_k^{1-\alpha} = \frac{1}{2}\sum_{k=0}^\infty\left(\frac{1}{2^{1-\alpha}} \right)^k = \frac{1}{2}\left[\frac{1}{1-2^{-(1-\alpha)}} \right].
   \]
  Then 
   \[
    f(t_0) \geq  \frac{\alpha \lambda}{\Gamma(1-\alpha)} \frac{C_0 f(t_0)}{2^{1-\alpha}-1}. 
   \]
  From the following relation on the Gamma function
   $ \Gamma(1-z)\Gamma(z)=\frac{\pi}{\sin(\pi z)} $ 
  it follows that for $\alpha_0<\alpha\leq 1$, the right hand side of the above equation is bounded below. Then for $C_0$ large enough we obtain a contradiction. 
 \end{proof}

 \begin{remark}  \label{r:alpha}
  In the above proof it is clear that by choosing $C_0$ larger (say for instance $2C_0$) that for fixed $\alpha$ and $M$ bounded from above 
  we may choose $k \leq N$ for some large $N$. This $N$ will
  depend on $\alpha$, but this gives a bound from below on $r_k$ such that \eqref{e:upbound} holds.  
 \end{remark}
 
 
 
  
 \begin{corollary}  \label{c:estimate}
  For any $\e_0>0$, there exists $C$ such that for $u$ as defined in Lemma \ref{l:meas}, there exists $r \in (0,2^{-1/(1-\alpha)})$ such that if 
  $\M_{u}(t)=u(t)$ and $M_{\alpha}^- u(t)\leq f(t)$, then there exists a constant $c_\alpha$ (depending on $\alpha$) and $r \in (c_{\alpha},2^{-1/(1-\alpha)})$
  such that 
   \begin{equation}  \label{e:pull}
    \frac{|\{s \in [t-r,t-r/2]\}: u(s)<u(t)-Cf(t)r|}{|[t-r,t-r/2]|} \leq \e_0
   \end{equation}
 \end{corollary}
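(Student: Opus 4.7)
The plan is to read off the corollary almost directly from Lemma \ref{l:meas} by choosing the parameter $M$ in that lemma proportional to $f(t)/\e_0$, and then to extract the lower bound $r > c_\alpha$ from the quantitative version of Lemma \ref{l:meas} recorded in Remark \ref{r:alpha}.

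First, given $\e_0 > 0$, I would set $C := C_0/\e_0$ (with $C_0$ the constant from Lemma \ref{l:meas}) and apply Lemma \ref{l:meas} with the choice $M := C f(t)$ at $t_0 = t$. The lemma produces an index $k$ such that
\[
|R_k(t) \cap \{s : u(s) < u(t) - M r_k\}| \leq C_0 \frac{f(t)}{M} |R_k(t)| = \e_0 |R_k(t)|.
\]
Setting $r := r_k$, so that $R_k(t) = [t-r,t-r/2]$ and $M r_k = Cf(t) r$, this is exactly \eqref{e:pull}. So the only nontrivial content is producing such a $k$ with a lower bound $r_k \geq c_\alpha$.

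Second, to secure $r > c_\alpha$ I would invoke Remark \ref{r:alpha}. The key observation I would want to make explicit is that in the contradiction derived at the end of the proof of Lemma \ref{l:meas}, the quantity $M$ cancels: the factor $Mr_k$ coming from the threshold multiplies $C_0 f(t_0)/M$ coming from the measure bound, leaving $C_0 f(t_0) r_k$. Consequently the contradiction rests only on the convergent geometric series $\sum_{k=0}^\infty r_k^{1-\alpha}$, which does not see $M$ or $f$. Truncating the series at $k=N$ costs only a $1 - 2^{-(1-\alpha)(N+1)}$ factor, so after replacing $C_0$ by, say, $2C_0$ (equivalently redefining $C$ by a harmless constant) the contradiction still goes through for $N = N(\alpha,\lambda,\Lambda)$ sufficiently large. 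Therefore a valid $k$ can always be selected with $k \leq N$, giving
\[
r = r_k \geq r_N = 2^{-1/(1-\alpha)} 2^{-N} =: c_\alpha > 0.
\]

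The main thing to verify carefully is precisely this independence of $N$ from $f(t)$ and $M$, since \emph{a priori} one might worry that for large $f(t)$ the required $k$ grows without bound; the cancellation of $M$ in the estimate rules this out. Everything else (that $R_k(t) = [t-r,t-r/2]$, that $2^{-1/(1-\alpha)}$ is the upper endpoint, and the conversion between the bound $|R_k \cap \{\cdot\}| \leq \e_0 |R_k|$ and the ratio in \eqref{e:pull}) is bookkeeping.
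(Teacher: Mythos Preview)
Your proposal is correct and follows the same approach as the paper, which simply says ``By choosing $M=Cf(t)\e_0^{-1}$ we obtain \eqref{e:pull} from Lemma \ref{l:meas} and Remark \ref{r:alpha}.'' Your version is in fact more careful: you explicitly trace the cancellation of $M$ in the contradiction argument of Lemma \ref{l:meas}, which justifies why the bound $k\le N$ from Remark \ref{r:alpha} does not actually depend on $M$ (despite the remark's phrasing ``$M$ bounded from above''), and hence why $c_\alpha$ is independent of $f(t)$.
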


 \begin{proof}
  By choosing $M=Cf(t)\e_0^{-1}$ we obtain \eqref{e:pull} from Lemma \ref{l:meas} and Remark \ref{r:alpha}. 
 \end{proof}
 
 \begin{lemma}  \label{l:abp}
  Let $u(t) \leq 0$ for $t<-2$ and $M_{\alpha}^- u \leq f$ on $[-2,T]$. There exists a constant $C$ depending on $\lambda, \alpha_0$ and 
  there exists finitely many intervals $I_j$ with disjoint interiors and with length between $c_\alpha$ and 
  $2^{-(1-\alpha)}$ such that 
   \begin{equation}  \label{e:abp}
    \sup u^+ \leq C \sum_{j} \max_{I_j}f^+(t) |I_j|.
   \end{equation}
  Furthermore, if $I_j=[t_j-r_j,t_j]$ is such an interval, and $[t_j-2r_j,t_j]\subset [-2,T]$, then 
   \begin{equation}  \label{e:abp2}
    |\{s \in [t_j-2r_j,t_j-r_j]:u(s)\geq u(t_j)-Cr_j\}|\geq \mu r_j. 
   \end{equation} 
 \end{lemma}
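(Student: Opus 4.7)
My plan is to iterate Corollary \ref{c:estimate} backwards in time, starting from a point where $u^+$ attains its supremum, and assemble the resulting intervals into the collection $\{I_j\}$.

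First I would dispose of the trivial case $\sup u^+ \le 0$ and otherwise pick $\tau_0 \in [-2,T]$ with $u(\tau_0) = \sup u^+ > 0$ (using upper semi-continuity of the subsolution and the fact that $u \le 0$ for $t \le -2$). By the very definition of the envelope, $\M_u(\tau_0) = u(\tau_0)$, so I can apply Corollary \ref{c:estimate} with a fixed small $\e_0 \in (0, 1)$ and with $f$ replaced by $f^+$ (noting $M_\alpha^- u \le f \le f^+$); this produces $\rho_1 \in (c_\alpha, 2^{-1/(1-\alpha)})$ and a constant $C$ so that the exceptional set in $[\tau_0 - \rho_1, \tau_0 - \rho_1/2]$ has relative measure at most $\e_0$. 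I would then set $r_1 = \rho_1/2$, $t_1 = \tau_0$, $I_1 = [t_1 - r_1, t_1]$, so that $[t_1 - 2r_1, t_1 - r_1] = [\tau_0 - \rho_1, \tau_0 - \rho_1/2]$ is exactly where the measure estimate gives \eqref{e:abp2} with $\mu = (1-\e_0)$ and with the $C$ there absorbing an $f^+(\tau_0)$-dependence into the constant (or, more cleanly, stating \eqref{e:abp2} with $Cr_j$ replaced by $Cf^+(t_j)r_j$, as the two formulations are equivalent for the use to be made of it later).

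Next I would iterate. Choose $\tau_1$ to be a point achieving the maximum of $u$ on $(-\infty, \tau_0 - \rho_1/2]$. Automatically $\M_u(\tau_1) = u(\tau_1)$; and because the good set is nonempty (it has measure at least $(1-\e_0)\rho_1/2$) and lies in $[\tau_0 - \rho_1, \tau_0 - \rho_1/2]$, we obtain the key descent estimate
\[
u(\tau_1) \ge u(\tau_0) - C f^+(\tau_0) \rho_1 \ge u(\tau_0) - 2C (\max_{I_1} f^+)\,r_1.
\]
Repeating at $\tau_1$ yields $\rho_2, r_2, t_2 := \tau_1$, $I_2 = [t_2 - r_2, t_2]$; by construction $t_2 \le t_1 - r_1$, which forces the interiors of $I_1$ and $I_2$ to be disjoint, and by induction the same holds for the whole chain. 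Since every $r_j \ge c_\alpha/2$ and the chain stays in $[-2, T]$, it terminates after finitely many steps $N \lesssim (T+2)/c_\alpha$; termination occurs once $\tau_N < -2$, at which point $u(\tau_N) \le 0$.

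Telescoping the descent estimates yields
\[
\sup u^+ \;=\; u(\tau_0) \;\le\; u(\tau_N) + \sum_{j=1}^{N} 2C (\max_{I_j} f^+)\,r_j \;\le\; 2C \sum_{j=1}^{N} (\max_{I_j} f^+)\,|I_j|,
\]
which is \eqref{e:abp}; the lengths $|I_j|=r_j$ lie in $(c_\alpha/2,\,2^{-1/(1-\alpha)}/2)$, matching the stated range up to harmless renaming of constants. The measure estimate \eqref{e:abp2} was built into the construction at each step.

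The main obstacle I anticipate is the bookkeeping at the selection step: one must check that $\tau_{j+1}$ actually exists (upper semi-continuity on a half-line bounded above by a point where the max is positive), that $\tau_{j+1} \le t_{j+1} - r_{j+1}$ so that the $I_j$'s genuinely have disjoint interiors, and that the iteration indeed strictly advances backwards in time by at least $c_\alpha/2$ at each step so the process terminates with $u \le 0$. A second subtlety is the form of the constant in \eqref{e:abp2}: the corollary produces $u(t_j) - C f^+(t_j) r_j$, so one should interpret the $Cr_j$ of the statement as incorporating an $f^+$-factor, or restrict to intervals where $f^+$ is controlled on $I_j$ before absorbing it into $C$.
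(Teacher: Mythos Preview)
Your proposal is correct and follows essentially the same strategy as the paper: iterate Corollary~\ref{c:estimate} backwards from a point where $u^+$ is maximized, record the resulting intervals $I_j$, and telescope the drops $u(\tau_{j-1})-u(\tau_j)$ to bound $\sup u^+$. The paper phrases the telescoping through the envelope $\M_{u^+}$ and selects the \emph{latest} contact point $t_j=\max\{s\le t_{j-1}-r_{j-1}:u(s)=\M_u(s)\}$ rather than an arbitrary maximizer, which additionally yields $\{u=\M_{u^+}\}\subset\bigcup_j I_j$; your choice of $\tau_j$ as a global maximizer on $(-\infty,t_{j-1}-r_{j-1}]$ gives the same descent estimate and disjointness, and your observation that the $Cr_j$ in \eqref{e:abp2} should really carry an $f^+(t_j)$ factor matches exactly how the paper uses it in the proof.
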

 
 \begin{remark}
  Notice that as $\alpha \to 1$ the inequality \eqref{e:abp} becomes
   \[
    \sup u \leq C \int_{\{u=\M_u\}} f^+(s) ds.  
   \] 
 \end{remark}
 
 \begin{proof}
  We will choose the finite sequence of intervals inductively. 
  Let $u^+$ achieve its maximum on $[-2,T]$ at the point $t_1$. From Corollary \ref{c:estimate} there exists $r_1$ with 
  $c_{\alpha}\leq r_1\leq 2^{-{1-\alpha}}$ such that \eqref{e:abp2} holds for $I_1 = [t_1 - r_1,t_1]$.
   \[
     \M_{u^+}(t_1)-\M_{u^+}(t_1-r_1) \leq u(t_1) - u(s)  \text{  for any  } s \in [t_1-2r_1,t_1-r_1].
   \]
   Since $[t_1-2r_1,t_1-r_1]$ satisfies \eqref{e:abp2}, we then have that 
    \[
     \M_{u^+}(t_1) - \M_{u^+}(t_1-r_1) \leq Cf(t_1)r_1 = Cf(t_1)|I_1|.
    \]
    Proceeding inductively, supposing $I_{j-1}$ has been chosen, we choose 
     \[
      t_{j} := \max \{-2\leq s \leq t_{j-1}-r_{j-1}: u(s)=\M_{u}(s)\}.
     \]
    If no such $t_j$ exists then the process terminates. Otherwise, by Corollary \ref{c:estimate} we 
    choose $r_j \in (c_{\alpha},2^{-1/(1-\alpha)})$ such that $[t_j-2r_j,t_j-r_j]$ satisfies \eqref{e:abp2}. 
    We label $I_j = [t_j-r_j,t_j]$. As in the case of $I_1$ we obtain 
     \[
       \M_{u^+}(t_j)-\M_{u^+}(t_j-r_j) \leq Cf(t_j)|I_j|. 
     \]
    Since each $r>c_{\alpha}$ the process will eventually terminate. 
    Now
     \[
      \{u = \M_{u^+}\} \subseteq \cup_{j} I_j,
     \]
    and so
     \[
      \sup u^+  \leq  \sum_{j} |\M_{u^+}(t_j) - \M_{u^+}(t_j - r_j) \leq C\sum_{j} \max_{I_j} f^+(t) |I_j|.
     \]
    Finally, from how each $I_j$ is chosen we have \eqref{e:abp2} holds for each $[t_j-2r_j,t_j-r_j]$ and also $\mathring{I}_j \cap \mathring{I}_k = \emptyset$ 
    for $j\neq k$.    
 \end{proof}
 
 For this next Lemma for an interval $I_j = [t_j - r_j, t_j]$ we define
  \[
   2I_j := [t_j - 2r_j, t_j]. 
  \]
 \begin{lemma}   \label{l:cover}
  Let $A \subset [0,1]$ and let $c_1,c_2$ be two positive constants. Let 
  $\{I_j: j \in \mathcal{I}\}$ be a collection of closed intervals with nonempty interior satisfying the following for every $j \in \mathcal{I}$:
   \[
    \begin{aligned}
     &(i) \quad 2I_j \subset [0,1]  \\
     &(ii) \quad \dot{I}_i \cap \dot{I}_j = \emptyset \text{ for every } i \neq j \\
     &(iii) \quad \sum_{j \in \mathcal{I}} |I_j| > c_1 \\
     &(iv) \quad |A \cap 2I_j| \geq c_2 |2I_j|,
    \end{aligned}
   \]
  then  $|A| \geq (c_1 \cdot c_2)/3$. 
 \end{lemma}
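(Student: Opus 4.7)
The plan is to use a Vitali-type covering argument to extract from $\{2I_j\}$ a pairwise disjoint subcollection whose triple enlargement still covers the union $\bigcup_j I_j$, and then use hypothesis (iv) on that disjoint subcollection.

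More precisely, first I would apply the one-dimensional Vitali covering lemma to the collection $\{2I_j : j \in \mathcal{I}\}$ to extract a (countable) subcollection $\{2I_{j_k}\}$ with pairwise disjoint interiors such that
\[
  \bigcup_{j \in \mathcal{I}} 2I_j \;\subset\; \bigcup_k 3 \cdot 2I_{j_k},
\]
where $3 \cdot 2I_{j_k}$ denotes the interval with the same center as $2I_{j_k}$ and three times the length. In particular $\bigcup_j I_j \subset \bigcup_k 3 \cdot 2I_{j_k}$, so
\[
  c_1 \;<\; \sum_{j \in \mathcal{I}} |I_j| \;=\; \Bigl|\bigcup_j I_j\Bigr| \;\leq\; \sum_k |3 \cdot 2I_{j_k}| \;=\; 3 \sum_k |2I_{j_k}|,
\]
using hypothesis (ii) (disjoint interiors of the $I_j$) for the first equality. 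Hence $\sum_k |2I_{j_k}| > c_1/3$.

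Second, since the chosen $2I_{j_k}$ are pairwise disjoint, hypothesis (iv) gives
\[
  |A| \;\geq\; \Bigl|A \cap \bigcup_k 2I_{j_k}\Bigr| \;=\; \sum_k |A \cap 2I_{j_k}| \;\geq\; c_2 \sum_k |2I_{j_k}| \;>\; \frac{c_1 c_2}{3},
\]
which is the desired bound.

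The only slightly delicate point is invoking the Vitali lemma when $\mathcal{I}$ may be infinite; this is handled by the standard greedy construction (sort by length and successively pick a longest interval disjoint from the previously chosen ones), using that all $2I_j$ lie in $[0,1]$ so lengths are bounded. Beyond that the argument is routine; no obstacle is anticipated.
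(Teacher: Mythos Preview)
Your argument is correct and is a Vitali-type covering argument, as is the paper's, but the executions differ. The paper does not invoke the standard Vitali lemma; instead it exploits the one-sided definition $2I_j=[t_j-2r_j,t_j]$ and builds a subcollection $\mathcal{J}\subset\mathcal{I}$ by sweeping from right to left, so that the doubled intervals $\{2I_j:j\in\mathcal{J}\}$ still cover $\bigcup_{i\in\mathcal{I}} I_i$ while each $2I_j$ in the subcollection meets at most two others; this bounded overlap yields the factor $3$. Your route---extract a pairwise \emph{disjoint} subcollection of the $2I_j$ whose concentric triples cover $\bigcup_j 2I_j$---is the cleaner textbook version and is agnostic to the one-sided doubling. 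One small point: to obtain the sharp constant $3$ (rather than $5$) from Vitali when $\mathcal{I}$ is infinite, it is simplest to first pass to a finite subfamily using the strict inequality in (iii), just as the paper does at the end of its proof; the greedy selection you describe would otherwise require the supremum of lengths to be attained at each step.
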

 
 \begin{proof}
  We first assume the collection $\{I_j\}$ is finite.
  We will choose a subset $\mathcal{J} \subseteq \mathcal{I}$.  
   Recall that each interval has nonempty interior and the 
  intersection of the interiors is empty. We will pick a subcollection of intervals as follows: We choose and label
  $I_1$ as the interval with the farthest right end point. If there is another interval $I_k$ for some 
  $k \in \mathcal{I}$ such that 
   \[
    \begin{aligned}
     &(1) \quad  2I_k \cap 2I_1 \neq \emptyset \text{  and  } \\
     &(2) \quad  2I_k \setminus (2I_k \cap 2I_1) \neq \emptyset,
    \end{aligned}
   \]
  then we choose $I_2$ to 
  be the interval satisfying $(1)$ and $(2)$ such that $2I_k$ has the farthest left endpoint. If no $2I_k$
  satisfies $(1)$ and $(2)$ then we choose $I_2$ to be the interval whose right end point is closest to the left 
  end point of $2I_1$. We then choose all remaining intervals in the same manner. It is clear from the construction
  that each interval from the collection $\{2I_j\}$ for $j \in \mathcal{J}$ can intersect at most two other intervals
  from the same collection. It is also clear that 
   \begin{equation}  \label{e:covered}
    \bigcup_{i \in \mathcal{I}} I_i \subset \bigcup_{j \in \mathcal{J}} 2I_j,
   \end{equation}  
  Then
   \[
    \begin{aligned}
     |A| &\geq \frac{1}{2} \sum_{j \in \mathcal{J}} |A \cap 2I_j| \\
         &\geq \frac{c_2}{3} \sum_{j \in \mathcal{J}} |2I_j| \\
         &\geq \frac{c_2}{3} \sum_{i \in \mathcal{I}} |I_i| \\
         &\geq \frac{c_1 c_2}{3}. 
    \end{aligned}
   \]
  If the collection $\{I_i\}$ is infinite, then we simply choose a finite subcollection of intervals 
  whose measure is within $\e$, and then let $\e \to 0$. 
 \end{proof}

 We now prove the most important Lemma. 
 
 \begin{lemma}  \label{l:krylov}
  Let $0<\alpha_0\leq \alpha \leq 1$. There exists $\e_0 >0$, $0<\mu<1$, and $M>1$ all depending on $\alpha_0, \lambda, \Lambda$ such that if 
   \[
    \begin{aligned}
     (i) \quad &u \geq 0 \text{ in } (-\infty,1] \\
     (ii) \quad &u(1) \leq 1/2 \\
     (iii) \quad &M_{\alpha}^+ u \geq -\e_0 \text{ in } [-1,1],
    \end{aligned}
   \]
  then 
   \[
    |\{u \leq M\} \cap [0,1]|>\mu. 
   \]
 \end{lemma}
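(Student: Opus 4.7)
The plan is to combine the ABP-type bound of Lemma \ref{l:abp} with the covering Lemma \ref{l:cover}, applied to a suitable transform of $u$. Set $v := -u$, so by Proposition \ref{p:basics}(ii), $v \leq 0$ on $(-\infty, 1]$, $v(1) \geq -1/2$, and $M_\alpha^- v \leq \epsilon_0$ on $[-1,1]$. Construct a smooth nondecreasing bump $\phi : \R \to [0,1]$ with $\phi \equiv 0$ on $(-\infty, -1/2]$ and $\phi \equiv 1$ on $[0, \infty)$, smoothly interpolated on $[-1/2, 0]$. A direct computation shows $M_\alpha^+ \phi \leq C_\phi = C_\phi(\alpha_0, \Lambda)$ uniformly on $[-1/2, 1]$, since $\phi$ is Lipschitz and smooth, the prefactor $\alpha/\Gamma(1-\alpha)$ stays controlled for $\alpha \in [\alpha_0, 1]$ (in fact vanishing as $\alpha \to 1$), and the integrand has no bad singularity once $\phi$ is taken flat at $-1/2$ and $0$.

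Define $w := v + \phi$. Then $w \leq 0$ on $(-\infty, -1/2]$, $w(1) \geq 1/2$, and by Proposition \ref{p:basics}(iv), $M_\alpha^- w \leq \epsilon_0 + C_\phi$ on $[-1/2, 1]$. Apply Lemma \ref{l:abp} to $w$ on $[-1/2, 1]$ (the proof adapts transparently from $[-2, T]$ to any left-bounded interval): since $\sup w^+ \geq 1/2$, one extracts disjoint intervals $I_j = [t_j - r_j, t_j] \subset [-1/2, 1]$ with $r_j \in [c_\alpha, 2^{-1/(1-\alpha)}]$ and
\[
\sum_j |I_j| \;\geq\; c_1 \;:=\; \frac{1}{2C(\epsilon_0 + C_\phi)},
\]
provided $\epsilon_0$ is chosen small. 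At each selected $t_j$, the inductive selection gives $w(t_j) = \M_w(t_j) \geq 0$, so $u(t_j) = \phi(t_j) - w(t_j) \leq 1$. Property \eqref{e:abp2} combined with Lipschitz control on $\phi$ shows that on a fraction $\mu$ of each $[t_j - 2r_j, t_j - r_j]$,
\[
u(s) \;\leq\; u(t_j) + (L + C) r_j \;\leq\; 1 + (L+C)\,2^{-1/(1-\alpha_0)} \;=:\; M.
\]

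Invoking Lemma \ref{l:cover} with $A = \{u \leq M\}$ and $c_2 = \mu/2$ (since $|A \cap 2I_j| \geq \mu r_j = (\mu/2)|2I_j|$) yields $|\{u \leq M\} \cap [-1/2, 1]| \geq c_1 \mu/6$. The main obstacle is localizing this measure estimate to the interval $[0, 1]$ required by the statement, rather than the larger interval $[-1/2, 1]$ produced by the ABP argument. I would handle this either by shifting the ramp of $\phi$ closer to $t=1$ so that the chain of contact intervals $I_j$ concentrates in $[0,1]$, or by a complementary direct argument: if $|\{u \leq M\} \cap [0,1]|$ were small, then $u > M$ on most of $[0,1]$, and plugging into the definition of $M_\alpha^+ u(1) \geq -\epsilon_0$ produces a large negative contribution from $(u(s) - u(1))^+$, contradicting the hypothesis for $\epsilon_0$ small. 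Tracking constants throughout produces $\epsilon_0, \mu, M$ depending only on $\alpha_0, \lambda, \Lambda$, uniform in $\alpha \in [\alpha_0, 1]$.
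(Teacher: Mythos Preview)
Your approach is essentially the paper's: form $w=\Phi-u$ for an increasing ramp $\Phi$, apply Lemma~\ref{l:abp} to $w$ to get $\sum_j|I_j|\gtrsim 1$, read off the bound $u\le M$ on a fixed fraction of each $2I_j$, and finish with Lemma~\ref{l:cover}. The paper simply takes $\Phi(t)=4(t-3/4)_+$, supported in $[3/4,1]$; since the contact points $t_j$ of $\{w=\M_w\}$ must lie where $\Phi>0$, this forces $t_j\in[3/4,1]$ and hence $2I_j\subset[0,1]$ automatically, eliminating the localization issue you flagged. Your first suggested fix (``shift the ramp closer to $t=1$'') is precisely this.

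Your alternative ``direct argument'' does not work as written. The inequality $M_\alpha^+ u\ge -\e_0$ is only in the viscosity sense, and you have no test function touching $u$ from below at $t=1$; even granting that, the positive part $\Lambda(u(1)-u(s))_+$ near $s=1$ is not controlled (you only know $u\ge0$ and $u$ continuous), so the integral defining $M_\alpha^+ u(1)$ may well be $+\infty$, in which case the inequality $\ge -\e_0$ carries no information and no contradiction follows. Stick with the ramp placed near the right endpoint.
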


 \begin{proof}
  We utilize the function 
   \[
    \Phi(t):=
    \begin{cases}
     0 &\text{ for } t\leq 3/4 \\
     4(t-3/4) &\text{ for } t \geq 3/4.
    \end{cases}
   \]
  Notice that $M_{\alpha}^- u \leq  4\lambda$ for all $0<\alpha\leq 1$. We now apply Lemma \ref{l:abp} to $v:= \Phi - u$. Since $u(1)\leq 1/2$ we have
  that $v(1)\geq 1/2$. We have From estimate \eqref{e:abp} 
  we have
   \[
    \begin{aligned}
    1/2 \leq \sup v &\leq C \sum_{j}  \max_{I_j} M_{\alpha}^- (\Phi - u) |I_j|  &\text{ from } \eqref{e:abp}\\
                    &\leq C(\e_0 + 4\lambda C_1)\sum_{j} |I_j|      &\text{ from Proposition } \ref{p:basics}   \\
                    &\leq C\e_0 + 4\lambda C_1\sum_{j} |I_j|.     
    \end{aligned}
   \]
  Then for $\e_0$ small enough  we obtain
   \[
    \frac{1}{4}\leq C \sum_{j} |I_j|. 
   \]
  Now $\Phi$ is supported in $[3/4,1]$. Then if $I_j=[t_j-r_j/4, t_j]$, then $t_j \in [3/4,1]$ and so $2I_j :=[t_j - r_j,t_j]\subset [0,1]$. 
  From \eqref{e:abp2}
   \[ 
    |\{s\in 2I_j : v(s)\geq v(t_j) - Cr_j \}| \geq \mu |I_j|. 
   \]
  We recall that $r_j \leq 2^{-(1-\alpha)}$ and  $v(t_j) \geq 0$, so that
   \[
    |\{s\in 2I_j : \Phi(s)- u(s) \geq  - C \}| \geq \mu |I_j|.
   \]
  Furthermore, $\Phi(s)\leq 1$, and so 
   \[
    |\{s\in 2I_j : 1+ C  \geq u(s) \}| \geq \mu |I_j|.
   \]
  We now use Lemma \ref{l:cover} to conclude that 
   \[
    |\{s \in [0,1] : 1+C \geq u(s) \}| \geq \mu_1. 
   \]
 \end{proof}

 Using Lemma \ref{l:krylov} with Lemma 4.2 from \cite{cc95} one proves the following two Lemmas just as 
 in \cite{cc95}. 
 
 \begin{lemma}  \label{l:krylov2}
  Let $u$ be as in Lemma \ref{l:krylov}, then 
   \[
    |\{u > M^k\} \cap [0,1]|\leq (1-\mu)^k. 
   \]
 \end{lemma}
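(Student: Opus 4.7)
The plan is to induct on $k$, with the iteration engine provided by the one-dimensional analog of Lemma 4.2 in \cite{cc95}. The base case $k=1$ is immediate: Lemma \ref{l:krylov} gives $|\{u \leq M\} \cap [0,1]| > \mu$, whose complement satisfies $|\{u > M\} \cap [0,1]| \leq 1 - \mu$.

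For the inductive step, set $A_k := \{u > M^k\} \cap [0,1]$ and $\delta := 1-\mu$. The Calder\'on-Zygmund-type decomposition of Lemma 4.2 in \cite{cc95} reduces the target estimate $|A_{k+1}| \leq \delta |A_k|$ to verifying the following propagation property: for every dyadic subinterval $I \subset [0,1]$, if $|A_{k+1} \cap I| > \delta |I|$, then the parent $\tilde I$ is entirely contained in $A_k$. Iterating against the base case then produces $|A_k| \leq (1-\mu)^k$.

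I would verify the propagation by contrapositive: given $t_0 \in \tilde I$ with $u(t_0) \leq M^k$, I would show $|\{u \leq M^{k+1}\} \cap I| \geq \mu |I|$. Setting $v(s) := M^{-k}\, u(a + rs)$ with $a$ and $r$ chosen so that $I$ corresponds to $[0,1]$ in the new variable, a direct change of variables gives the scaling identity
\[
 M_{\alpha}^+ v(s) \;=\; r^{\alpha}\, M^{-k}\,(M_{\alpha}^+ u)(a + rs),
\]
and together with $r \leq 1$, $M > 1$ this yields $M_{\alpha}^+ v \geq -\e_0$; nonnegativity $v \geq 0$ transfers from $u$; and $t_0$ becomes a rescaled point at which $v$ is bounded by $1$. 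Applying Lemma \ref{l:krylov} (in a form that accommodates this rescaled picture) to $v$ then gives $|\{v \leq M\} \cap [0,1]| > \mu$, which transported back is exactly the desired bound on $I$.

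The main technical obstacle is the bookkeeping of the rescaling so as to match the hypotheses of Lemma \ref{l:krylov}. In particular, Lemma \ref{l:krylov} requires smallness $u(1) \leq 1/2$ at the specific right endpoint $t=1$, while the contrapositive only supplies a point $t_0 \in \tilde I$ at which $u(t_0) \leq M^k$ (so that $v$ at the corresponding rescaled point is at most $1$, not $1/2$). To bridge this one replaces $1/2$ in Lemma \ref{l:krylov} by any fixed constant $c < 1$, which only alters the constants $M$, $\mu$, $\e_0$: the proof is identical after adjusting the linear piece of $\Phi$. One then exploits the backward-in-time nature of $\D_t^{\alpha}$, which propagates smallness forward from $t_0 \in \tilde I$ onto the child interval $I$ immediately to its future, in the spirit of the stacked-cube iteration of \cite{cs09, cc95}.
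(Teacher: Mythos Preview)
Your overall plan---induction via the Calder\'on--Zygmund Lemma~4.2 of \cite{cc95} together with a rescaled application of Lemma~\ref{l:krylov}---is exactly what the paper indicates, and your identification of the mismatch between the hypothesis $u(1)\leq 1/2$ (a fixed right endpoint) and what the contrapositive actually delivers (an arbitrary $t_0\in\tilde I$) is a real issue that the paper glosses over.

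However, your proposed repair has the direction of propagation reversed. Lemma~\ref{l:krylov} takes smallness at $t=1$ and produces a measure estimate on $[0,1]$, i.e.\ on the \emph{past} of the small point; this reflects the fact that $M_\alpha^+ u(t)\geq -\e_0$ prevents $u(t)$ from dropping too far below nearby past values, so a small value of $u$ at $t_0$ constrains $u$ on $(-\infty,t_0]$, not on the future of $t_0$. Your sentence ``propagates smallness forward from $t_0\in\tilde I$ onto the child interval $I$ immediately to its future'' is therefore the wrong way around, and as written the argument fails precisely when $t_0$ lies to the left of $\sup I$ (for instance when $I$ is the right half of $\tilde I$ and $t_0$ sits in the left half).

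The genuine fix is to rescale so that $t_0$ lands at (or to the right of) the right endpoint of the target interval, and to replace the elliptic parent condition $\tilde I\subset A_k$ by a one-sided condition on an interval to the \emph{right} of $I$. Concretely: show that if there exists $t_0$ in (say) $[\sup I,\sup I+2|I|]$ with $u(t_0)\leq M^k$, then $|\{u\leq M^{k+1}\}\cap I|\geq \mu|I|$---this follows from Lemma~\ref{l:krylov} after the translation $s\mapsto t_0-|I|+|I|s$. One then needs the parabolic ``stacked'' variant of the covering lemma (as in \cite{w92} or the nonlocal parabolic papers) rather than the purely dyadic Lemma~4.2 of \cite{cc95}, since the region guaranteed to lie in $A_k$ is a right-translate of $I$ rather than its dyadic parent.
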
 
 
 \begin{lemma}  \label{l:krylov3}
  Let $u$ be as in Lemma \ref{l:krylov}, then 
   \[
    |\{u > t\} \cap [0,1]|\leq dt^{-\e} 
   \]
  where $d,\e>0$ depend only on $\lambda, \Lambda, \alpha_0$. 
 \end{lemma}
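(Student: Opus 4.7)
The plan is to pass from the discrete geometric decay along the sequence $M^k$ given by Lemma \ref{l:krylov2} to the continuous power decay asserted in Lemma \ref{l:krylov3} by a standard interpolation argument.

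First I would dispose of the easy range: for $0 < t \le M$, the conclusion is trivial because $|\{u > t\} \cap [0,1]| \le 1$, and we may absorb this into the constant $d$ by choosing $d \ge M^{\e}$ for whatever $\e$ we end up picking. So the real content is when $t > M$.

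For $t > M$, let $k \ge 1$ be the unique integer with $M^{k-1} < t \le M^k$. Then by monotonicity of the superlevel sets and Lemma \ref{l:krylov2},
\[
|\{u > t\} \cap [0,1]| \le |\{u > M^{k-1}\} \cap [0,1]| \le (1-\mu)^{k-1}.
\]
Now write $(1-\mu)^{k-1} = (1-\mu)^{-1} (1-\mu)^k$ and note that $(1-\mu)^k = \exp(k \log(1-\mu))$ with $\log(1-\mu) < 0$. Since $t \le M^k$ gives $k \ge (\log t)/(\log M)$, and since $\log(1-\mu)$ is negative, we obtain
\[
(1-\mu)^k \le \exp\!\left( \tfrac{\log t}{\log M} \cdot \log(1-\mu) \right) = t^{-\e},
\qquad
\e := -\frac{\log(1-\mu)}{\log M} > 0.
\]
Setting $d := (1-\mu)^{-1}$ (enlarged as above to cover the small-$t$ regime) yields $|\{u > t\} \cap [0,1]| \le d t^{-\e}$ for all $t > 0$. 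Since $\mu$ and $M$ depend only on $\lambda,\Lambda,\alpha_0$ (by Lemma \ref{l:krylov}), so do $d$ and $\e$.

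There is no real obstacle here; the lemma is a formal bookkeeping consequence of Lemma \ref{l:krylov2}, exactly as in \cite{cc95}. The only point requiring minor care is to record the explicit formula $\e = -\log(1-\mu)/\log M$ so that the dependence on $\alpha_0,\lambda,\Lambda$ is transparent, and to note that in applications where one wants estimates uniform as $\alpha \to 1$, the constants $\mu,M$ from Lemma \ref{l:krylov} are themselves uniform in that limit, so $d$ and $\e$ inherit this uniformity.
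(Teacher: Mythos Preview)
Your argument is correct and is exactly the standard interpolation from the geometric decay in Lemma~\ref{l:krylov2} to a power-type estimate, which is precisely what the paper intends by deferring to \cite{cc95}. There is nothing to add; the paper gives no independent proof and simply cites the argument you have written out.
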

 
 We now have
 
 \begin{theorem}  \label{t:Lepsilon}
  Let $u \geq 0$ in $(-\infty,0]$ and $M_{\alpha}^+ u \geq -C_0$ in $[-2r,0]$. Assume $\alpha \geq \alpha_0>0$. Then 
   \[
    |\{u >t\}\cap [-r,0]|\leq dr(2u(0)+C_0r^{\alpha}\e_0^{-1})^{\e} t^{-\e} \text{  for every  } t
   \]  
  where the constants $d,\e$ depend only on $\lambda, \Lambda,$ and $\alpha_0$ as in Lemma \ref{l:krylov3}. 
 \end{theorem}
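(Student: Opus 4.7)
The plan is to reduce to Lemma \ref{l:krylov3} by an affine change of variables in time combined with a multiplicative normalization. Since Lemma \ref{l:krylov3} controls $|\{u>t\}\cap[0,1]|$ under the hypotheses that $u\geq 0$ on $(-\infty,1]$, $u(1)\leq 1/2$, and $M_\alpha^+ u \geq -\epsilon_0$ on $[-1,1]$, the task is merely to find an affine map that sends the window $[-2r,0]$ to $[-1,1]$ (so that $[-r,0]$ lands on $[0,1]$ and the reference point $0$ lands on $1$) together with a scalar factor that simultaneously forces the value at the right endpoint below $1/2$ and reduces the forcing constant to $\epsilon_0$.

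First I would verify the scaling identity for $M_\alpha^+$: if $\tilde u(s) := u(rs)$, then the change of variables $\sigma = r\tau$ in the defining integral yields
\[
 M_\alpha^+ \tilde u(s) = r^\alpha M_\alpha^+ u(rs),
\]
with an analogous identity under pure translation. This is a direct computation from the formula defining $M_\alpha^+$ and is compatible with the viscosity notion because the rescaling is affine and the Marchaud-type kernel is homogeneous in $t-s$. Next I would set
\[
 K := 2u(0) + C_0 r^\alpha \epsilon_0^{-1}, \qquad v(s) := \frac{u(r(s-1))}{K},
\]
so that $v$ is defined on $(-\infty,1]$, $v\geq 0$ there, $v(1)= u(0)/K \leq 1/2$, and on $[-1,1]$ one has
\[
 M_\alpha^+ v(s) = \frac{r^\alpha}{K}\, M_\alpha^+ u\bigl(r(s-1)\bigr) \geq -\frac{r^\alpha C_0}{K} \geq -\epsilon_0
\]
by the choice of $K$ (since $r(s-1)\in[-2r,0]$ when $s\in[-1,1]$). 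Thus $v$ meets all the hypotheses of Lemma \ref{l:krylov3}.

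Finally, I would apply Lemma \ref{l:krylov3} to obtain $|\{v>\tau\}\cap[0,1]|\leq d\tau^{-\epsilon}$, then transport this estimate back to $u$: the preimage of $[0,1]$ under $s\mapsto r(s-1)$ is $[-r,0]$, with the Jacobian contributing a factor of $r$, and $\{v>\tau\}$ corresponds to $\{u>K\tau\}$. Setting $t = K\tau$ produces
\[
 |\{u>t\}\cap[-r,0]| \leq d\, r\, K^\epsilon\, t^{-\epsilon} = d\, r\,\bigl(2u(0)+C_0 r^\alpha \epsilon_0^{-1}\bigr)^\epsilon t^{-\epsilon},
\]
which is the claimed bound. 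I expect no genuine obstacle: the only point that deserves a moment of care is checking the scaling identity for $M_\alpha^+$ on viscosity super-solutions, but since the change of variables is affine this is essentially bookkeeping on the defining integrals, handled by Propositions \ref{p:classic} and \ref{p:classic2}.
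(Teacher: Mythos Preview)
Your argument is correct and is essentially the paper's own proof: define $v$ by dividing $u$ by $K=2u(0)+C_0 r^\alpha \e_0^{-1}$ and rescaling time by $r$, check that $v$ satisfies the hypotheses of Lemma~\ref{l:krylov3}, and then undo the change of variables. The only cosmetic difference is that you build the translation by $1$ into the change of variables so that Lemma~\ref{l:krylov3} applies verbatim on $[0,1]$, whereas the paper rescales to $[-2,0]$ and invokes Lemma~\ref{l:krylov3} on $[-1,0]$ via translation invariance.
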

 
 \begin{proof}
  We rescale and let 
   \[
    v(t):= \frac{u(rt)}{2u(0)+C_0r^{\alpha} \e_0^{-1}}. 
   \]
  Then $M_{\alpha}^+ v \geq -\e_0$ on $[-2,0]$ and $v(0)\leq 1/2$, so we may apply Lemma \ref{l:krylov3} to $v$ and conclude 
   \[
    |\{v > t\} \cap [-1,0]|\leq dt^{-\e}, 
   \]
  which written in terms of $u$ becomes 
   \[
    |\{u>t\} \cap [-r,0]| \leq dr[2u(0)+C_0r^{\alpha}\e_0^{-1}]^{\e}t^{-\e}. 
   \]
 \end{proof}
 
 Theorem \ref{t:main} will follow from the following
 \begin{theorem}   \label{t:regular}
  Let $\alpha_0 < \alpha <1$ for some $\alpha_0>0$. Let $u$ be a bounded in $(-\infty,0]$ and continuous on $[-1,0]$. If 
   \[
    \begin{aligned}
     M_{\alpha}^+ u &\geq - C_0 \text{ in } (-1,0], \\
     M_{\alpha}^- u &\leq C_0   \text{ in } (-1,0],
    \end{aligned}
   \]
   then there is a $\beta>0$ and $C>0$ both depending only on $\lambda, \Lambda, \alpha_0$ such that $u \in C^{0,\beta}([-1/2,0])$ and 
     \[
      \| u \|_{C^{0,\beta}([-1/2,0])} \leq C(\| u \|_{L^{\infty}} + C_0).
     \]
 \end{theorem}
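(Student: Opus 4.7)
The plan is to derive Hölder continuity from Theorem \ref{t:Lepsilon} by a Krylov--Safonov style oscillation decay, following the template of Sections 10--12 in \cite{cs09} but using the one-sided-in-time weak $L^\varepsilon$ estimate in place of its elliptic analogue. After the normalization $u \mapsto u/(\|u\|_{L^\infty((-\infty,0])} + C_0/\varepsilon_0)$, one may assume $\|u\|_{L^\infty} \leq 1$ and $C_0 \leq \varepsilon_0$, where $\varepsilon_0$ is the small constant from Lemma \ref{l:krylov}. Because the hypotheses are translation invariant on the half-line (the operator $M_\alpha^{\pm}$ at $t_0$ only sees $s \leq t_0$), it suffices to establish the pointwise bound $|u(0) - u(t)| \leq C|t|^\beta$ for $t \in [-1/2,0]$; the full estimate will follow by applying the same conclusion at every base point $t_0 \in [-1/2, 0]$, with $(-1, t_0]$ in place of $(-1, 0]$.

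The heart of the argument is an inductive construction of sequences $m_k \leq M_k$ with $M_k - m_k = 2\rho^k$, $[m_{k+1}, M_{k+1}] \subset [m_k, M_k]$, and $m_k \leq u(t) \leq M_k$ for $t \in [-r^k, 0]$, with universal $r, \rho \in (0,1)$ depending only on $\alpha_0, \lambda, \Lambda$. This immediately yields Hölder continuity at $0$ with exponent $\beta = \log(1/\rho)/\log(1/r)$. For the inductive step, set
\[
v(t) := \rho^{-k}\bigl(u(t) - \tfrac{1}{2}(M_k + m_k)\bigr),
\]
so $|v| \leq 1$ on $[-r^k, 0]$ and the earlier steps give $|v(t)| \leq 2\rho^{j-k}$ for $t \in [-r^j, -r^{j+1}]$ with $j < k$ (polynomial growth of order $|t|^\beta$ into the past). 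Either $\{v \geq 0\}$ or $\{v \leq 0\}$ occupies at least half of $[-r^k, -r^{k+1}]$; assume the former. Rescale $[-r^{k+1}, 0]$ to $[-1, 0]$ via $\tau = r^{-(k+1)}t$ and set $w(\tau) := (1 - v(r^{k+1}\tau))_+$. On $[-1, 0]$, $w \geq 0$, while truncating the negative part of $1 - v$ outside $[-1, 0]$ contributes an error to $M_\alpha^+ w$ that, after bounding the $|t|^\beta$ growth of $v$ against $(t-s)^{-1-\alpha}$, is controlled by a universal constant provided $\beta < \alpha_0$. Combined with the scaling of $M_\alpha^{\pm}$ under $t \mapsto r^{k+1} t$ and the normalized bound $C_0 \leq \varepsilon_0$, this yields $M_\alpha^+ w \geq -C_1$ on $[-1, 0]$. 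Theorem \ref{t:Lepsilon} then gives $|\{w > s\} \cap [-1, 0]| \leq d(2w(0) + C_1/\varepsilon_0)^\varepsilon s^{-\varepsilon}$, while the measure hypothesis on $\{v \geq 0\}$ translates to $|\{w \leq 1\} \cap [-1, -r]| \geq \tfrac{1}{2}(1 - r)$. These two estimates are compatible only if $w(0) \geq \theta > 0$ universally, i.e., $v(0) \leq 1 - \theta$. Running the same argument at every base point in $[-r^{k+1}, 0]$ upgrades this to $\sup_{[-r^{k+1}, 0]} v \leq 1 - \theta$; setting $M_{k+1} := M_k - \theta \rho^k$ and $m_{k+1} := m_k$ closes the induction with $\rho := 1 - \theta/2$.

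The main technical obstacle is tail control: Theorem \ref{t:Lepsilon} demands a nonnegative test function on all of $(-\infty, 0]$, but $1 - v$ is generally negative wherever $v > 1$, which can happen on the distant past. The inductive construction produces only polynomial growth $|v(t)| \lesssim |t|^\beta$ into the past, and when integrated against $(t-s)^{-1-\alpha}$ this tail is finite precisely when $\beta < \alpha$. Uniformity as $\alpha \to 1$ is secured by fixing $\beta < \alpha_0$, but small $\beta$ also constrains how fast the oscillation can decay: one must choose $\rho = r^\beta$ so that the truncation error remains below a fixed fraction of $\varepsilon_0$ at every scale while the contraction produced by Theorem \ref{t:Lepsilon} still strictly improves the oscillation. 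Balancing this tail estimate against the contraction rate is the delicate step; once arranged, the remainder of the argument is scaling and bookkeeping.
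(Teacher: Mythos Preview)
Your overall strategy---oscillation decay via the weak $L^\varepsilon$ estimate of Theorem~\ref{t:Lepsilon}, with inductive control of the tails through a polynomial growth bound---is exactly the paper's approach (encapsulated there as Lemma~\ref{l:main}). However, the dichotomy step in your inductive argument has the direction reversed, and as written the conclusion does not follow.

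You assume $|\{v\geq 0\}|$ is large and set $w=(1-v)_+$. Then $\{v\geq 0\}\subset\{w\leq 1\}$, so your hypothesis gives a \emph{lower} bound on $|\{w\leq 1\}|$, equivalently an \emph{upper} bound on $|\{w>1\}|$. But Theorem~\ref{t:Lepsilon} already supplies an upper bound on $|\{w>s\}|$ in terms of $w(0)$; two upper bounds on the same quantity yield no information, and in particular do not force $w(0)\geq\theta$. (Take $w\equiv 0$: both estimates hold trivially.) The correct pairing is the opposite one. Under the branch $|\{v\geq 0\}|$ large, set instead $w=(v+1)_+$ (this is the paper's choice, after its normalization $v=(u-m_k)/\tfrac12(M_k-m_k)$); then $\{v\geq 0\}\subset\{w\geq 1\}$ gives a \emph{lower} bound on $|\{w>1-\}|$, which against Theorem~\ref{t:Lepsilon}'s upper bound $|\{w>1\}|\leq d\,(2w(t_0)+C)^{\varepsilon}$ forces $w(t_0)\geq\theta$, i.e.\ $v(t_0)\geq -1+\theta$, and one raises $m_{k+1}$. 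Equivalently, keep your $w=(1-v)_+$ but switch to the branch $|\{v\leq 0\}|$ large; then $|\{w\geq 1\}|$ is large and the same mechanism lowers $M_{k+1}$.

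A secondary slip: your rescaling $\tau=r^{-(k+1)}t$ sends $[-r^k,-r^{k+1}]$ to $[-r^{-1},-1]$, not to $[-1,-r]$ as you wrote. This is harmless once fixed (apply Theorem~\ref{t:Lepsilon} with the larger radius), but worth correcting. With these two repairs the argument goes through and coincides with the paper's proof.
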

 
 Theorem \ref{t:regular} follows from Theorem \ref{t:Lepsilon} and the following Lemma
 
 \begin{lemma}  \label{l:main}
  Let $-1/2 \leq u \leq 1/2$ in $(-\infty,0]$. There exists $\delta_0$ depending only on $\lambda, \Lambda$ and $0<\alpha_0<1$ such that if 
   \[
    \begin{aligned}
     &M_{\alpha}^+ u \geq -\delta_0 &\text{ in } [-1,0]\\
     &M_{\alpha}^- u \leq  \delta_0 &\text{ in } [-1,0],
    \end{aligned}
   \]
  then there is a $\beta$ depending only on $\lambda, \Lambda,$ and $\alpha_0$ such that if $1>\alpha\geq \alpha_0$
   \begin{equation}  \label{e:beta1}
    |u(t)-u(0)| \leq C|t|^\beta
   \end{equation}
  For some constant $C$. 
 \end{lemma}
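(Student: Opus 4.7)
The plan is to establish an oscillation decay of $u$ at $t = 0$ by induction on dyadic scales, adapting the Krylov--Safonov argument from \cite{cs09} to the one-sided Marchaud-type setting. We construct monotone sequences $\{m_k\}_{k \geq 0}$ (nondecreasing) and $\{M_k\}_{k \geq 0}$ (nonincreasing) satisfying $M_k - m_k = \theta^k$ and $m_k \leq u \leq M_k$ on $[-\rho^k, 0]$, where $\rho, \theta \in (0,1)$ are fixed at the outset with $\theta = \rho^\beta$ and $\beta \in (0, \alpha_0)$. The base case $k = 0$ is the hypothesis $|u| \leq 1/2$, and \eqref{e:beta1} follows immediately since $|u(t) - u(0)| \leq \theta^k \leq C|t|^\beta$ whenever $|t| \leq \rho^k$.

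For the inductive step, rescale via $v(t) := \tfrac{2}{\theta^k}\bigl(u(\rho^k t) - \tfrac{M_k + m_k}{2}\bigr)$. Then $|v| \leq 1$ on $[-1, 0]$, and the inductive hypothesis at intermediate levels $j < k$ forces the controlled tail growth $|v(t)| \leq C_\rho |t|^\beta$ for $t \leq -1$. A change of variables gives $M_\alpha^\pm v(t) = 2\rho^{k\alpha}\theta^{-k} M_\alpha^\pm u(\rho^k t)$, so the condition $\beta < \alpha_0$ (equivalently $\rho^\alpha \leq \theta$) together with the assumption $|M_\alpha^\pm u| \leq \delta_0$ yields $M_\alpha^+ v \geq -2\delta_0$ and $M_\alpha^- v \leq 2\delta_0$ on $[-1, 0]$, uniformly in $k$. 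Dichotomize: either $|\{v \leq 0\} \cap [-1, -1/2]| \geq 1/4$ or $|\{v \geq 0\} \cap [-1, -1/2]| \geq 1/4$. Consider the first case (the other is symmetric via $(1+v)_+$) and put $w := (1-v)_+$, which is nonnegative on $(-\infty, 0]$, equals $1-v$ on $[-1,0]$, and is $\geq 1$ on the set in question. Decomposing $w = (1-v) + (v-1)_+$ and applying Proposition \ref{p:basics}(iv), together with the estimate on the tail term $\frac{\alpha\Lambda}{\Gamma(1-\alpha)}\int_{-\infty}^{-1}(v(s)-1)_+(t_0-s)^{-1-\alpha}\,ds$ (which converges uniformly because $\beta < \alpha$ and $\alpha/\Gamma(1-\alpha)$ is bounded on $[\alpha_0, 1)$), gives $M_\alpha^+ w \geq -C_0$ on $[-1, 0]$ with $C_0$ depending only on $\lambda, \Lambda, \alpha_0, \delta_0$.

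For each fixed $t_0 \in [-\rho, 0]$ with $\rho \leq 1/2$, translate so that $t_0$ is the origin and apply Theorem \ref{t:Lepsilon} to the shifted $w$ at scale $r = 1 - t_0 \in [1, 3/2]$; the preserved density estimate $|\{w \geq 1\} \cap [t_0 - r, t_0]| \geq 1/4$ yields, by the contrapositive of the weak $L^\epsilon$ bound, a uniform lower bound $w(t_0) \geq c_1 > 0$. This translates to $v \geq -1 + 2c_1$ on $[-\rho, 0]$, hence $u \geq m_k + \theta^k c_1$ on $[-\rho^{k+1}, 0]$; setting $\theta := 1 - c_1$, $m_{k+1} := m_k + \theta^k c_1$, and $M_{k+1} := M_k$ then closes the induction. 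The main obstacle is balancing the tail constant $C_0$ --- which cannot be made arbitrarily small because $v$ necessarily has polynomial growth outside $[-1, 0]$ inherited from the earlier induction steps --- against the fixed constants $d, \epsilon, \epsilon_0$ in Theorem \ref{t:Lepsilon}: the contrapositive produces a positive $c_1$ only when $C_0$ is small relative to $\epsilon_0(4d)^{-1/\epsilon}$, which ultimately fixes the required smallness of $\delta_0$ and the admissible choices of $\beta$ and $\rho$.
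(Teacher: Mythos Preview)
Your outline is essentially the paper's proof: dyadic oscillation decay at $t=0$, rescaling to $v$ with $|v|\le 1$ on $[-1,0]$, polynomial tail growth inherited from the earlier inductive levels, truncation to a nonnegative function, tail estimate giving $M_\alpha^+ w\ge -C_0$ with $C_0$ small for $\beta$ small, and then Theorem~\ref{t:Lepsilon} to force a pointwise lower bound.  The paper uses scale $4^{-k}$ and writes $v=(u-m_k)/\tfrac{1}{2}(M_k-m_k)$ with the truncation $w=\max\{v,0\}$; your midpoint-centred $v$ with $w=(1\mp v)_+$ is an equivalent repackaging.

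There is, however, a case mismatch in your write-up.  In the branch $|\{v\le 0\}\cap[-1,-1/2]|\ge 1/4$ with $w=(1-v)_+$, the conclusion $w(t_0)\ge c_1$ yields $v(t_0)\le 1-c_1$, an \emph{upper} bound on $v$; this forces you to lower $M_{k+1}$, not raise $m_{k+1}$.  The conclusion ``$v\ge -1+2c_1$'' and the update $m_{k+1}:=m_k+\theta^k c_1$ belong to the other branch (via $w=(1+v)_+$).  Swap the two conclusions.  Also, applying Theorem~\ref{t:Lepsilon} at $t_0\in[-\rho,0]$ with $r=1-t_0$ demands $M_\alpha^+ w\ge -C_0$ on $[t_0-2r,t_0]$, which for $t_0$ near $0$ reaches down to $-2$, whereas you only checked the inequality on $[-1,0]$.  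The paper avoids this by placing the density interval well inside $[-1,0]$ and taking $r$ small enough that $[t_0-2r,t_0]\subset[-1,0]$; make the analogous adjustment (e.g.\ density on an interval like $[-1/2,-1/4]$ and $t_0$ restricted to $[-1/8,0]$ with a fixed small $r$).
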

 
 \begin{proof}
  The proof is nearly the same as in \cite{cs09}. We give the details. We will construct a sequence $m_k, M_k$ with $m_k \leq u \leq M_k$ and 
   \begin{align}  \label{e:beta}
    m_k \leq u \leq M_k \text{  in  } [-4^{-k},0]  \\
    m_k \leq m_{k+1} \text{  and  } M_k \geq M_{k+1}  \\
    M_k - m_k = 4^{-\beta k}.
   \end{align}
  This will show \eqref{e:beta1} with constant $C=2^{\beta}$. For $k=0$ we choose $m_0=-1/2$ and $M_0=1/2$ and by assumption we have \eqref{e:beta}. 
  We proceed by induction. Assume \eqref{e:beta} holds up to $k$. 
  We have either 
    \begin{gather*}
    \left |\{u \geq (M_k + m_k)/2\} \cap [-3\cdot 4^{-(k+3)},-4^{-(k+2)}] \right| \geq 4^{-(k+4)} \\
    \text{  or  }  \\
    \left |\{u \leq (M_k + m_k)/2\} \cap [-3\cdot4^{-(k+3)},-4^{-(k+2)}] \right| \geq 4^{-(k+4)}.
   \end{gather*}
  If we assume the first then we define
   \[
    v(t):= \frac{u(4^{-k}t)-m_k}{(M_k - m_k)/2}.
   \]
  We have that $v \geq 0$ in $[-1,0]$ and $|\{v\geq 1\} \cap [-1/2,-1/4]| \geq 1/16$. We also have that
   \[
    M_{\alpha}^+ v \geq \frac{-4^{-k\alpha}\delta_0}{(M_k - m_k)/2} = -2\delta_0 4^{k(\beta-\alpha)} \geq -2\delta_0
   \]
  as long as $\beta <\alpha$. From the inductive hypothesis, for any $0 \leq j < k$ we have 
   \[
    v \geq \frac{m_{k-j} - m_k}{2(M_k-m_k)} \geq \frac{m_{k-j}-M_{k-j}  + M_{k}- m_k}{2(M_k-m_k)} \geq 2(1-4^{\beta j}),
   \]
  and so $v \geq -2(|4t|^{\beta}-1)$ outside $[-1,0]$. We define $w:= \max\{v,0\}$. For any $t \in [-3/4,0]$ we have
    \begin{align} \label{e:remark}
     M_{\alpha}^+ w(t) - M_{\alpha}^+v(t) &= \frac{\alpha}{\Gamma(1-\alpha)}\int_{\{v<0\}\cap \{t<-1\}} \frac{\Lambda v(s)}{(t-s)^{1+\alpha}}  \\
                        &\geq \frac{\alpha \Lambda}{\Gamma(1-\alpha)}\int_{-\infty}^{-1} \frac{-2(|4s|^\beta-1)}{(t-s)^{1+\alpha}}. 
    \end{align}         
  Thus $M_{\alpha}^+ w - M_{\alpha}^+ v \geq -2 \delta_0$ for $\beta$ small enough. 
  We then have $M_{\alpha}^+ w \geq -4\delta_0$, and so for any $t_0 \in [-1/4,0]$ we may apply Theorem \ref{t:Lepsilon} to $w$ 
  to obtain
   \[
    \begin{aligned}
    \frac{1}{16} \leq |\{w>1\}\cap[-3/8,-1/4]| &\leq |\{w>1\}\cap [-3/8,t_0]| \\
                                                                 &\leq d(1/8)(2u(t_0) +  4(1/8)^{\alpha} \delta_0 e_0^{-1})^{\e}.
    \end{aligned}
   \]
  Thus, for $\delta_0$ small enough we have $u(t_0)\geq \theta>0$ for any $t_0 \in [-1/4,0]$. We let $M_{k+1}=M_k$ and $m_{k+1}=m_k + \theta(M_k-m_k)/2$.
  Then, $M_{k+1}-m_{k+1}=(1-\theta/2)4^{-\beta k}$. Choosing $\beta$ and $\theta$ small with $(1-\theta/2)=4^{-\beta}$ we have 
  $M_{k+1}-m_{k+1}= 4^{-\beta (k+1)}$. 
  
  If on the other hand $|\{u\leq (M_k +m_k)/2 \cap [-3\cdot 4^{-(k+3)},-4^{-(k+2)}]\}| \geq 4^{-(k+4)}$, then we define
   \[
    v:= \frac{M_k - u(4^{-k}t)}{(M_k - m_k)/2}
   \]
  and use that $M_{\alpha}^- u \leq \delta_0$. 
 \end{proof}

 \section{A Barrier}    \label{s:barrier}
  In this section we construct a subsolution to an ordinary differential equation that will allow us to prove the H\"older continuity. 
  We begin with the following 
   \begin{lemma}  \label{l:cont}
    Let $u \in C^{0,\beta}((-\infty,0])$ with $ \alpha< \beta <1$. 
    Then $M_{\alpha}^{\pm} u$ is continuous on $(-\infty,0]$.  
   \end{lemma}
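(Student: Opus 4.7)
The plan is a straightforward application of the dominated convergence theorem, using the Hölder exponent $\beta>\alpha$ to control the singularity at $s=t$ and the $L^\infty$ bound built into the $C^{0,\beta}$ norm to control the tail at $s\to -\infty$.

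First I would make the change of variables $r=t-s$ and write
\[
 M_{\alpha}^+ u(t) = \frac{\alpha}{\Gamma(1-\alpha)}\int_0^\infty \frac{\Lambda(u(t)-u(t-r))_+ - \lambda(u(t)-u(t-r))_-}{r^{1+\alpha}}\,dr,
\]
and analogously for $M_{\alpha}^-$. Denote the integrand by $g_t(r)$. Using the two bounds $|u(t)-u(t-r)|\le [u]_{C^{0,\beta}}\, r^{\beta}$ and $|u(t)-u(t-r)|\le 2\|u\|_{L^\infty}$, both uniform in $t$, I would set
\[
 G(r):=\Lambda \min\!\bigl([u]_{C^{0,\beta}}\, r^{\beta-1-\alpha},\ 2\|u\|_{L^\infty}\, r^{-1-\alpha}\bigr),
\]
which dominates $|g_t(r)|$ for every $t\le 0$. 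Because $\beta>\alpha$ the first bound makes $G$ integrable near $0$, and because $1+\alpha>1$ the second bound makes $G$ integrable near $\infty$, so $G\in L^1((0,\infty))$.

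Next, for any sequence $t_n\to t_0$ in $(-\infty,0]$, continuity of $u$ (which follows from its Hölder regularity) and of $x\mapsto x_\pm$ gives $g_{t_n}(r)\to g_{t_0}(r)$ pointwise for every $r>0$. The uniform bound $|g_{t_n}(r)|\le G(r)$ together with the dominated convergence theorem then yields
\[
 M_\alpha^+ u(t_n) \longrightarrow M_\alpha^+ u(t_0),
\]
so $M_\alpha^+ u$ is continuous at $t_0$. The same argument applied with the roles of $\Lambda$ and $\lambda$ exchanged handles $M_\alpha^-$.

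There is no real obstacle here; the only thing to verify carefully is that the dominating function $G$ can be chosen independently of $t$, which is immediate from the fact that the $C^{0,\beta}$ seminorm and the $L^\infty$ norm are global quantities, so neither the small-$r$ Hölder bound nor the large-$r$ boundedness bound degrades as $t$ varies.
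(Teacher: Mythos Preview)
Your proof is correct and uses essentially the same idea as the paper: the H\"older bound $|u(t)-u(t-r)|\le [u]_{C^{0,\beta}}r^\beta$ controls the singularity near $r=0$ and the $L^\infty$ bound controls the tail. The paper carries this out by hand via an $\epsilon$-splitting of the integral at a fixed cutoff $\xi<t$, whereas your substitution $r=t-s$ fixes the domain of integration and lets dominated convergence handle everything in one step --- a slightly cleaner packaging of the same argument.
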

 
   \begin{proof}
    We will show that for fixed $t$  and any $\epsilon >0$, there exists $h_0$  such that if $0\leq h <h_0$, then 
     \begin{equation} \label{e:lipcont}
      \left|M_{\alpha}^{\pm} u(t+h) - M_{\alpha}^{\pm}u(t) \right| < \epsilon. 
     \end{equation}
     For fixed $\xi<t$ and for $0\leq h$, we have 
    \[
     \begin{aligned}
     &\left| \frac{\alpha}{\Gamma(1-\alpha)} \int_{\xi}^{t+h} \frac{\Lambda [u(t+h)-u(s)]_+ -\lambda[u(t+h)-u(s)]_-}{(t+h-s)^{1+\alpha}} \ ds \right|  \\
      &\quad \leq \frac{\alpha}{\Gamma(1-\alpha)} \Lambda \| u\|_{C^{0,\beta}} \frac{(t+h-\xi)^{\beta-\alpha}}{\beta-\alpha}.
     \end{aligned}
    \]
     We may choose $\xi$ close enough to $t$ and choose $h$ small enough, so that the above inequality is less than $\epsilon/2$. 
     Now since $u$ is continuous, if $t>\xi$, then both
     \[
          \left|\int_{-\infty}^{\xi}  \frac{ [u(t+h)-u(s)]_-}{(t+h -s)^{1+\alpha}}      
      -\frac{[u(t)-u(s)]_-}{(t-s)^{1+\alpha}} \ ds \right| \to 0
      \]
      and
      \[
     \left| \int_{-\infty}^{\xi}  \frac{ [u(t+h)-u(s)]_+}{(t+h -s)^{1+\alpha}}      
      -\frac{[u(t)-u(s)]_+}{(t-s)^{1+\alpha}} \ ds \right| \to 0
    \]     
    as $h \to 0$.  Then we may choose $h$ small enough, so that  \eqref{e:lipcont} holds. 
     \end{proof}

   For the next Lemma we will require a solution to an ordinary differential equation. 
   We recall that from Theorem \ref{t:exist} if  $f$ is continuous on $[-2,0]$ with $f(-2)=0$. 
   Then exists a viscosity solution $u$ to the differential equation
     \[
      \begin{cases}
       u(t)=0 &\text{ for } t\leq -2\\
       \D_t^{\alpha} u = f(t) &\text{ on } (-\infty,0]. 
      \end{cases}     
     \]
  That $u$ is a viscosity solution on $(-2,0]$ is a direct result from Theorem \ref{t:exist}. Since $f(t)=u(t)=0$ for $t\leq -2$
  it is immediate that $u$ is also a solution on $(-\infty,-2]$ as well. From Theorem \ref{t:main} the solution $u$ is H\"older continuous. 
 
  In the next Lemma we will also utilize the following bump function. 
  Let $\eta \geq 0$ with support in $[0,1]$. Let $\eta' \geq 0$ for $t \leq 1/2$ and $\eta' \leq 0$ for $t\geq 1/2$. 
  We note that $\D_t^{\alpha} \eta \leq C$ for some $C$ independent of $\alpha$ and $\D_t^{\alpha} \eta(t) <0 $ for $t>1$. We will use 
   \[
    \eta_{\epsilon} := \epsilon \eta(t/\epsilon).  
   \] 
 
  \begin{lemma}  \label{l:zorn}
  Let $f$ be smooth on $[-2,0]$ with $f(-2)=0$. Let $u$ be the viscosity solution to 
    \begin{equation}  \label{e:subbelow}
       \begin{cases}
       \D_t^{\alpha}  u(t) = f  &\text{ on } (-\infty,0] \\
       u(t)=0    &\text{ for } t\leq -2.
       \end{cases}  
     \end{equation}
 Then there exists a sequence of Lipschitz subsolutions $\{u_{k}\}$ to the above equation with $u_{k}\leq u$ and $u_{k}\to u$ uniformly on $[-2,0]$.  
 \end{lemma}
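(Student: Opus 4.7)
The plan is a Perron-type construction. For each integer $k\geq k_0$, define
\[
\mathcal{F}_k := \{v : v \text{ is $k$-Lipschitz on } \R,\ v \leq u,\ v(t)=0 \text{ for } t \leq -2,\ \D_t^\alpha v \leq f \text{ in viscosity on } (-2,0]\}
\]
and set $u_k(t):=\sup\{v(t):v\in\mathcal{F}_k\}$. I would first verify $\mathcal{F}_k\neq\emptyset$ for $k_0$ large by exhibiting the piecewise-linear function that is $0$ for $t\leq-2$, decreases linearly to $-M$ on $[-2,-2+\delta]$, and equals $-M$ thereafter. Using Proposition \ref{p:classic2}, one checks directly that $\D_t^\alpha v \leq -cM \leq f$ once $M$ is taken large relative to $\|f\|_\infty$, and the function is $(M/\delta)$-Lipschitz. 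Standard Perron-style reasoning---in the same spirit as what would prove Theorem \ref{t:exist}---then shows $u_k\in\mathcal{F}_k$: the $k$-Lipschitz bound is preserved under suprema; $u_k\leq u$ and the boundary condition are immediate; and the viscosity subsolution property is preserved because, given any test function $\phi$ touching $u_k$ from above at $t_0$, the nonlocal test $v_{u_k}$ dominates $v_{v_j}$ pointwise with equality at $t_0$ for the constituent $v_j$ that realizes the sup, yielding $\D_t^\alpha v_{u_k}(t_0)\leq \D_t^\alpha v_{v_j}(t_0)\leq f(t_0)$.

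Since $\mathcal{F}_k\subseteq\mathcal{F}_{k+1}$ the sequence $\{u_k\}$ is nondecreasing and converges pointwise to some $u^{*}\leq u$; by Lemma \ref{l:limit}, $u^{*}$ is again a viscosity subsolution. The task reduces to showing $u^{*}\equiv u$, which by the uniqueness statement in Theorem \ref{t:exist} amounts to proving $u^{*}$ is also a viscosity supersolution. Suppose not: some test function $\phi\in C^{0,\gamma}$ with $\gamma>\alpha$ touches $u^{*}$ from below at some $t_0\in(-2,0]$ with $\D_t^\alpha\phi(t_0)<f(t_0)-3\eta$ for some $\eta>0$. If $u^{*}(t_0)=u(t_0)$ then $\phi$ also touches $u$ from below at $t_0$, contradicting the supersolution property of the solution $u$. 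So $u^{*}(t_0)<u(t_0)$; after securing a positive lower bound on $u-u^{*}$ on a small interval $I$ around $t_0$, I exploit Lemma \ref{l:cont} to extend $\D_t^\alpha\phi\leq f-2\eta$ to $I$, and then perturb by the bump: set $\tilde\phi:=\phi+\eta_\epsilon(\cdot - t_0+\epsilon/2)$ on $I$ and $\tilde\phi:=\phi$ elsewhere. Because $\|\eta_\epsilon\|_\infty=O(\epsilon)$, the Lipschitz constant of $\eta_\epsilon$ is the $\epsilon$-independent quantity $\|\eta'\|_\infty$, and $\D_t^\alpha\eta_\epsilon\leq C$ uniformly in $\alpha$, choosing $\epsilon$ small enough ensures $\tilde\phi\leq u$ on $I$, $\D_t^\alpha\tilde\phi\leq f-\eta$ classically on the bump support, and $\tilde\phi(t_0)>u^{*}(t_0)\geq u_{k'}(t_0)$. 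Then $\max(u_{k'},\tilde\phi)$ belongs to $\mathcal{F}_{k''}$ for $k''$ just larger than $k'$ (to absorb the added Lipschitz constant), exceeds $u_{k''}$ at $t_0$, and contradicts the definition of $u_{k''}$.

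Once $u^{*}\equiv u$ pointwise, the monotone pointwise convergence of the continuous functions $u_k$ to the continuous $u$ on the compact interval $[-2,0]$ yields uniform convergence by Dini's theorem. The main obstacle is the positive-gap claim underlying the bump perturbation: because $u^{*}$ is only lower-semicontinuous (being a sup of continuous functions), the strict inequality $u^{*}(t_0)<u(t_0)$ does not automatically provide a neighborhood on which $u-u^{*}$ is bounded below by a positive constant. I expect to handle this by a separate comparison argument: if $u^{*}$ coincided with $u$ along a sequence approaching $t_0$ but fell strictly below at $t_0$, one could interpose a strict Lipschitz subsolution constructed via the bump function on a small interval and contradict the maximality of $u^{*}$ at the sequence of points where $u^{*}=u$. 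Once that gap is in hand, the quantitative matching of scales---$C\epsilon$ against $\eta$, and $\epsilon\|\eta\|_\infty$ against $\inf_I(u-u^{*})$---is entirely straightforward.
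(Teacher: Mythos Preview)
Your overall strategy---build a maximal family of Lipschitz subsolutions and contradict maximality via a bump perturbation if the limit falls short of $u$---matches the paper's. The maximality device differs: the paper applies Zorn's Lemma to the class $\mathfrak{K}_2$ of \emph{uniform} limits of Lipschitz subsolutions, whereas you take $u_k=\sup\mathcal{F}_k$ over $k$-Lipschitz subsolutions and pass to the monotone limit $u^{*}$, invoking Dini at the end. Your route is more constructive and avoids the axiom of choice, but it creates a difficulty the paper sidesteps: $u^{*}$ is a priori only lower-semicontinuous, so Lemma~\ref{l:limit} (which requires a continuous limit) does not yield that $u^{*}$ is a subsolution. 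That particular gap is inessential, since you only need $u^{*}$ to be a \emph{super}solution---Theorem~\ref{t:comparison} applied with the solution $u$ as subsolution then forces $u\leq u^{*}$---so your reduction survives once rephrased.

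The genuine gap is in the bump step. The test function $\phi$ touches $u^{*}$ from below only on $[t_0-r,t_0]$; it is not globally defined and is certainly not a subsolution of $\D_t^{\alpha}\,\cdot\leq f$. Hence $\tilde\phi=\phi+\eta_{\epsilon}$ is not in any $\mathcal{F}_k$, and at points where $\tilde\phi>u_{k'}$ the nonlocal tail of $\D_t^{\alpha}\max(u_{k'},\tilde\phi)$ sees values you have not controlled, so the subsolution property may fail there. The paper resolves exactly this by (i) splicing the local test $\psi$ with a genuine global Lipschitz subsolution $w_k$ approximating the maximal element from below, (ii) extending to the right of $t_0$ by a steep linear piece $-M_1(t-t_0)+w(t_0)$, and (iii) invoking Lemma~\ref{l:cont} to check that the resulting globally defined Lipschitz function satisfies $\D_t^{\alpha}\,\cdot\leq f$ everywhere, with strict inequality near $t_0$; only then is $\eta_{\epsilon}$ added. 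Your sketch needs the analogous global construction. Once you carry it out, the obstacle you flagged (a neighborhood gap for $u-u^{*}$) largely disappears: the relevant inequality is $u-\phi>0$ with both functions continuous, which holds near $t_0$ automatically.
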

 
 \begin{proof}
  We consider for fixed $M> \sup f$  the set 
   \[
    \mathfrak{K}_1 := \{w: -M \leq D_t^{\alpha} w \leq f \text{ on } [0,2], \text{ and } w \text{ is Lipschitz}\}.
   \]
  We define 
 \[
    \mathfrak{K}_2 := \{v : -M \leq \D_t^{\alpha} v \leq f \text{ on } [0,2] \text{ and } u_k \Rightarrow v \text{ with } u_k \leq v \text{ and } u_k \in \mathfrak{K}_2 \}.   
 \]
  We note that any $v \in \mathfrak{K}_2$ is continuous since it is the uniform limit on a compact set of Lipschitz continuous functions. 
   
  We now show that $\mathfrak{K}_1$ is nonempty, and hence $\mathfrak{K}_2$ is also nonempty. From the theory of ordinary differential equations
  \cite{d04}, we solve 
   \[
    \partial_t^{\alpha} g(t) = h(t)/\lambda \leq f(t) /\lambda, 
   \]
  with $h$ smooth and strictly decreasing.  Then $g$ is strictly decreasing, and so 
   \[
    \D_t^{\alpha} g \leq M_{\alpha}^+ g(t) = \lambda \partial_t^{\alpha} g(t) = h(t) \leq f(t).
   \]  
  Thus $g$ is a smooth subsolution to \eqref{e:subbelow} and $g$ can be approximated from below by itself. We choose $-M< \inf h/\lambda$. 
  Thus $\mathfrak{K}_2$ is nonempty.
  
  We now assign a partial ordering to $\mathfrak{K}_2$ with the natural assignment that $v_1 \leq v_2$ if $v_1\leq v_2$ everywhere on 
  $(-\infty,0]$. From the comparison principle we have that $u$ as in \eqref{e:subbelow}
  is an upper bound for $\mathfrak{K_2}$.    
  
  We will now show that $u \in \mathfrak{K_2}$. 
  By Zorn's Lemma there exists a maximal element $w$. We will show that $w \equiv u$. If $\D_t^{\alpha} w \equiv \D_t^{\alpha} u$, 
  then by comparison and uniqueness, $w \equiv u$. Suppose by way of contradiction that $w$ is not identically $u$. 
  Then $w$ is not a supersolution of \eqref{e:subbelow}, and so there exists a $t_0 \in [0,2]$ and a Lipschitz function $\psi$ with $\psi(t_0)=w(t_0)$ and 
  $\psi \leq w$ on $[t_0 - \delta,t]$ for some $\delta >0$ such that $\D_t^{\alpha} \phi(t_0) < f(t_0)$ where 
   \[
    \phi(t) :=
     \begin{cases}
       \psi(t) \text { if } t \in (t_0 - \delta],\\ 
        w(t)  \text{ if } t < t_0 - \delta.
     \end{cases}    
   \]
   Then from Proposition \ref{p:classic2} we can evaluate $D_t^{\alpha} w(t_0)$ classically and 
  $\D_t^{\alpha} w(t_0) < f(t_0)$. 
   Because $w \in \mathfrak{K}_2$ there exists Lipschitz subsolutions $w_{k} \to w$ uniformly from below. Then there exists $k$ large enough so that 
    \[
    \phi_k(t) :=
     \begin{cases}
       \psi(t) \text { if } t \in (t_0 - \delta],\\ 
        w_k(t)  \text{ if } t < t_0 - \delta.
     \end{cases}    
   \]   
   satisfies $M_{\alpha}^+ \phi_k < f$ on $[t_0 - \delta_1,t_0]$ for some $0<\delta_1 < \delta$. 
   We now consider two different situations. If $w_k(t_0)=w(t_0)$, then for $k$ large enough, 
   $\D_t^{\alpha}w_k(t_0) < f(t_0)$ because $\D_t^{\alpha} w(t_0)< f(t_0)$ classically. 
   Since $w_k$ is Lipschitz, it follows from Lemma \ref{l:cont} that $\D_t^{\alpha} w_k < f$ in 
   $[t_0 - \delta_2, t_0 + \delta_2]$. Then since $\D_t^{\alpha} (w_k + \eta_{\e}) = \D_t^{\alpha} w_k + \D_t^{\alpha} \eta_{\e}$, 
   then $w_k + \eta_{\e}$ is a Lipschitz subsolution for $\epsilon$ small enough, 
   and $w_k(t_0) + \eta_{\e}(t_0) > w(t_0)$. 
   Now the $\max\{w_k +\eta_{\e},w\} \in \mathfrak{K}_2$, and this contradicts the maximality of $w$.

   If in the second situation, $w_k(t_0) < w(t_0)$, then we extend $\phi_k$ to the right of $t_0$ by $\phi_k(t_0) = -M_1 (t_0 -t) + w(t_0)$. 
   From Lemma \ref{l:cont}, for $M_1$ large enough,  $\D_t^{\alpha} \phi_k \leq  f$ for all $t \in [-2,0]$, and $\D_t^{\alpha} \phi_k < f$ in $[t_0 - \delta_3, t_0 + \delta_3]$. 
   We let  $\tilde{\phi}_k := \max \{\phi_k, w_k\}$ and note that
   $\tilde{\phi}_k \in \mathfrak{K}_1$ and Lipschitz, and $\D_t^{\alpha} \phi_k < f$ in $[t_0 - \delta_4, t_0 + \delta_4]$ for some $\delta_4 >0$ with $\tilde{\phi}_k(t_0)=w(t_0)$. 
   Then as before we may take $\tilde{\phi}_k + \eta_{\epsilon}$  
   with $\epsilon$ small enough and obtain a contradiction.       
 \end{proof}

 In order to prove H\"older continuity of solutions to \eqref{e:nontime} we will follow the method presented in \cite{s11}. One of the main ingredients is to
 solve an ordinary differential equation in time. We begin with the following. 
 
 \begin{lemma} \label{l:timeexist}
  Let $C_1$ be a fixed constant. Let $g(t)$ be a continuous function on $[-2,0]$. There exists a continuous viscosity solution $m(t)$ in $[-2,0]$ to 
   \[
    \D_t^{\alpha} m(t)= C_1m(t) + g(t), 
   \]
  with $m(t)=0$ for $t\leq -2$. 
 \end{lemma}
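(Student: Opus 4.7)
The plan is to recast the equation as a fixed-point problem and solve it by Banach's contraction principle on a short time interval, then iterate to cover $[-2,0]$.

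For $\delta>0$ to be chosen, given $\mu\in C([-2,-2+\delta])$ extended by $0$ for $t\le -2$, define $T\mu$ to be the restriction to $[-2,-2+\delta]$ of the unique viscosity solution, supplied by Theorem \ref{t:exist}, of the linear equation $\D_t^{\alpha} u = C_1\mu + g$ on $[-2,-2+\delta]$ with $u=0$ for $t\le-2$. For $\mu_1,\mu_2$ set $w:=T\mu_1-T\mu_2$; two applications of Lemma \ref{l:jensen} combined with the linearity of the right-hand side give $\D_t^{\alpha} w = C_1(\mu_1-\mu_2)$ in the viscosity sense, with $w\equiv 0$ on $(-\infty,-2]$.

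The core step is a barrier estimate. Consider $\psi(t):=A(t+2)_+^{\alpha}$. Since $\psi$ is smooth on $(-2,0]$ and the defining integral
\[
\D_t^{\alpha}\psi(t) = \int_{-2}^{t}[\psi(t)-\psi(s)]\K(t,s)\,ds
\]
converges classically at every $t\in(-2,-2+\delta]$, it agrees with the viscosity value by Proposition \ref{p:classic2}. Using the lower bound in \eqref{e:alphabound} and the substitution $s=-2+(t+2)r$ one obtains
\[
\D_t^{\alpha}\psi(t) \;\ge\; \frac{A\alpha\lambda}{\Gamma(1-\alpha)}\int_{0}^{1}\frac{1-r^{\alpha}}{(1-r)^{1+\alpha}}\,dr \;=:\; c_0 A,
\]
where $c_0>0$ is finite because $1-r^{\alpha}\sim\alpha(1-r)$ near $r=1$. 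Choosing $A:=C_1\|\mu_1-\mu_2\|_{\infty}/c_0$ makes $\psi$ a viscosity supersolution of $\D_t^{\alpha}\psi \ge C_1(\mu_1-\mu_2)$ on $(-2,-2+\delta]$. Applying the comparison principle Theorem \ref{t:comparison} both to $(w,\psi)$ and to $(-w,\psi)$, with trivial data at $t=-2$, yields $|w|\le\psi$ on $[-2,-2+\delta]$, so
\[
\|T\mu_1-T\mu_2\|_{L^{\infty}([-2,-2+\delta])} \;\le\; \frac{C_1\delta^{\alpha}}{c_0}\|\mu_1-\mu_2\|_{\infty}.
\]
Picking $\delta$ with $C_1\delta^{\alpha}/c_0\le 1/2$ makes $T$ a contraction; its unique fixed point is the sought continuous viscosity solution on $[-2,-2+\delta]$.

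To reach $t=0$, iterate: once $m$ is known on $[-2,-2+k\delta]$, repeat the argument on $[-2+k\delta,-2+(k+1)\delta]$, keeping the past data $m|_{(-\infty,-2+k\delta]}$ fixed and using the shifted barrier $A(t+2-k\delta)_+^{\alpha}$. Because $T\mu_1$ and $T\mu_2$ must agree on the prescribed past, those contributions cancel in the difference $w$, so the very same contraction estimate goes through with the same $\delta$, uniformly in $k$. Finitely many steps then produce a continuous viscosity solution on all of $[-2,0]$. I expect the main technical obstacle to be the barrier computation itself: one must check that $\psi$ has a well-defined classical fractional derivative at every interior point despite being only $\alpha$-Hölder at $t=-2$ (which is borderline for Proposition \ref{p:classic}), and verify that the resulting constant $c_0$ is strictly positive, both of which reduce to the explicit substitution-and-integration step above.
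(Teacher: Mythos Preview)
Your argument is correct, but it proceeds along quite different lines from the paper. The paper defines, for $v\in C^{0,\beta_1}$ with $\beta_1<\beta$ (the H\"older exponent from Theorem~\ref{t:main}), the map $v\mapsto h$ where $h$ solves $\D_t^{\alpha}h=C_1v+g$ via Theorem~\ref{t:exist}; Theorem~\ref{t:main} then gives the a~priori bound $\|h\|_{C^{0,\beta}}\le C\|C_1v+g\|_{L^\infty}$, so the map is compact from $C^{0,\beta_1}$ into itself, and a Schauder-type fixed point argument (Corollary~11.2 in Gilbarg--Trudinger) produces $m$, which is a viscosity solution by Lemma~\ref{l:limit}.

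Your route replaces compactness by an explicit contraction: the barrier $\psi(t)=A(t-T_1)_+^{\alpha}$ and comparison convert the linear solution operator into a strict contraction on $C([T_1,T_1+\delta])$ for $\delta$ small, and iteration covers $[-2,0]$. The trade-offs are as follows. Your proof is more elementary in that it avoids the H\"older regularity theory of Section~\ref{s:holder} entirely, relying only on Theorem~\ref{t:exist}, Lemma~\ref{l:jensen}, and Theorem~\ref{t:comparison}; it also yields uniqueness of $m$ for free. The paper's argument, by contrast, is shorter once Theorem~\ref{t:main} is available and illustrates how the regularity result feeds back into existence, which is thematically natural since Theorem~\ref{t:main} is one of the paper's two main theorems. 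One small point: your barrier estimate and the choice $A=C_1\|\mu_1-\mu_2\|_\infty/c_0$ implicitly assume $C_1\ge 0$; for general $C_1$ you should replace $C_1$ by $|C_1|$ throughout the contraction step.
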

 
 \begin{proof}
  Let $\beta_1 < \beta$ for $\beta$ as in Theorem \ref{t:main}. From Theorem \ref{t:exist}, for $v \in C^{\beta_1}([0,2])$ there exists a solution $h(t)$ to 
   \[
    \D_t^{\alpha} h(t)= C_1v(t) + g(t),
   \]
  with $h(t)=0$ for $t\leq -2$. From Theorem \ref{t:main} we have that
   \[
    \| h \|_{C^{0,\beta}} \leq C \| C_1 v + g \|_{L^{\infty}}. 
   \]
  Since $C^{0,\beta_1}([-2,0])$ is compactly contained in $C^{0,\beta}([-2,0])$ we have a compact mapping $M: v \to h$ from $C^{0,\beta_1}([-2,0])$ into itself. From 
  Corollary 11.2 in \cite{gt01} it follows that there is a fixed point $m(t)$ which is a viscosity solution from Lemma \ref{l:limit}.
 \end{proof}
 
 As in \cite{s11} we will utilize an ordinary differential equation to capture information backwards in time. 
 We consider the fractional ordinary differential equation 
    \begin{equation}  \label{e:fode}
     \begin{cases}
      m(-2)=0 & \text{ for } t\leq -2 \\
      \D_t^{\alpha}  m(t) = c_0 |\{x \in B_1: u(x,t)< 0\}| - C_1m(t) &\text{ for } -2<t\leq 0.
     \end{cases}
    \end{equation}
   We would like to use $m$ as a test function for a viscosity solution. However, since the right hand side is not continuous, 
   we cannot apply Lemma \ref{l:timeexist} to obtain the existence of $m$. Furthermore, the solution $m$ may not be Lipschitz and therefore 
   not a valid test function. To overcome these two issues we obtain a Lipschitz subsolution to \eqref{e:fode}. 
  We consider $|\{x \in B_1: u(x,t)< 0\}|$ rather than $|\{x \in B_1: u(x,t)\leq 0\}|$ because 
  we may easily approximate the former from below by smooth functions. We accomplish this by 
   considering $g(x,t,\e):=\min\{\epsilon^{-1}\max\{0,-u\} , 1\}$. We then let  
    \[
      G(t,\e) = \int_{B_1}g(x,t,\e) \ dx.
    \]
    Now $G(t,\e)$ is continuous in $t$, and $0\leq G(t,\e)\leq |\{x \in B_1: u(x,t)< 0\}|$, and $G(t,\e) \to |\{x \in B_1: u(x,t)< 0\}|$ as $\e \to 0$. 
    Since $G(t,\e)$ is continuous, from Lemma \ref{l:timeexist}, we may solve 
    \[
     \begin{cases}
      m(-2)=0 & \text{ for } t\leq -2 \\
      \D_t^{\alpha}  m(t) = c_0 G(t,\e) - C_1m(t) &\text{ for } -2<t\leq 0.
     \end{cases}
    \]


 \begin{lemma}  \label{l:mubound}
  Let $\alpha_0 \leq \alpha <1$. Assume that 
  the kernel for $\D_t^{\alpha}$ satisfies \eqref{e:timediv}. 
  Let $m$ be a solution to $\D_t^{\alpha} m = c_0 f(t)-C_1 m$ with $m(t)=0$ for $t\leq -2$, $0\leq f\leq |B_1|$,  and 
    \begin{equation}   \label{e:muf}
     \int_{-2}^{-1} f(t) \geq \mu. 
   \end{equation}
   Then there exists two moduli of continuity $\omega_1(c_0 \mu), \omega_2(C_1^{-1})$ with $\omega_i$ increasing and $\omega_i(s)>0$ for $s>0$ and depending only 
   on $\lambda, \Lambda, |B_1|, \alpha_0$ so that 
    \begin{equation}  \label{e:expbound}
     m(t) \geq \omega_1 (c_0 \mu)  \omega_2 (C_1^{-1}) \quad \text{for } -1\leq t \leq 0.
    \end{equation}
  \end{lemma}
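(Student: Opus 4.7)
The plan is to prove the lemma in three stages: first establish that $m \geq 0$ is bounded, then use the divergence identity of Corollary~\ref{c:divint} to locate a point in $[-2,-1]$ where $m$ attains a quantitative lower bound, and finally propagate this bound onto $[-1,0]$ by evaluating the equation at the infimum of $m$. For the first stage, the comparison principle (Theorem~\ref{t:comparison}) applied to the trivial subsolution $0$ gives $m \geq 0$, and comparison with the solution of $\D_t^{\alpha} v = c_0|B_1|$ with $v \equiv 0$ for $t \leq -2$ yields $\|m\|_{L^\infty} \leq M(c_0|B_1|,\alpha_0,\lambda,\Lambda)$ via Theorem~\ref{t:main}.

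For the second stage, since $m$ coincides with the divergence solution (Theorem~\ref{t:divnondiv}), Corollary~\ref{c:divint} applied at $t = -1$ with right-hand side $c_0 f - C_1 m$ gives
\[
c_0\mu \;\leq\; c_0\int_{-2}^{-1} f(s)\,ds \;\leq\; C_1\int_{-2}^{-1} m(s)\,ds + \tfrac{\Lambda\alpha}{\Gamma(1-\alpha)}\int_{-2}^{-1}\tfrac{m(s)}{(-1-s)^\alpha}\,ds \;\leq\; \Bigl(C_1 + \tfrac{\Lambda\alpha}{\Gamma(2-\alpha)}\Bigr)\sup_{[-2,-1]} m,
\]
so $\sup_{[-2,-1]} m \geq A := c_0\mu/C_\ast$ with $C_\ast = C_1 + C(\alpha_0,\lambda,\Lambda)$. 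The H\"older estimate of Theorem~\ref{t:main}, applicable because $\|m\|_{L^\infty}$ and $\|c_0 f - C_1 m\|_{L^\infty}$ are both controlled from stage~1, then gives $m \geq A/2$ on an interval $J \subset [-2,-1]$ of length $\delta = \delta(A,M,\alpha_0) > 0$. For the third stage, let $t_0 \in [-1,0]$ attain $M^\ast := \inf_{[-1,0]} m$. Since the constant $M^\ast$ touches $m$ from below at $t_0$, Proposition~\ref{p:classic2} gives a classical evaluation of $\D_t^{\alpha} m(t_0)$. Splitting the integration on $(-\infty,t_0]$ into $(-\infty,-2]$ (where $m\equiv 0$), $[-2,-1] \supset J$, and $(-1,t_0]$ (where $m \geq M^\ast$), one obtains
\[
-C_1 M^\ast \;\leq\; c_0 f(t_0) - C_1 M^\ast \;=\; \int_{-\infty}^{t_0}[M^\ast - m(s)]\K(t_0,s)\,ds \;\leq\; M^\ast I_0 - \tfrac{A}{2}\kappa_J,
\]
where $\kappa_J := \int_J \K(t_0,s)\,ds$ and $I_0 := \int_{-\infty}^{-1}\K(t_0,s)\,ds$. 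Rearranging yields the pointwise bound $M^\ast \geq A\kappa_J/(2(C_1+I_0))$, which has the shape $\omega_1(c_0\mu)\omega_2(C_1^{-1})$.

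The principal obstacle is controlling the ratio $\kappa_J/(C_1+I_0)$ uniformly in $t_0$: as $t_0 \to -1^+$, the quantity $I_0$ diverges like $(t_0+1)^{-\alpha}$, and unless $J$ is placed with right endpoint at $-1$ (so that $\kappa_J$ diverges at the same rate and the ratio $\kappa_J/I_0$ stays bounded away from $0$), the bound degenerates. The resolution is to iterate the H\"older propagation forward from the initially located $J$: after $m\geq A/2$ on $J=[a,b]$, Theorem~\ref{t:main} gives $m \geq A/4$ on $[b,b+\rho_1]$, then $A/8$ on $[b+\rho_1,b+\rho_1+\rho_2]$, and so on, with $\rho_k \sim (A/C_H)^{1/\beta}2^{-k/\beta}$, so after finitely many steps we obtain a subinterval $J' \subset [-2,-1]$ of the form $[b',-1]$ on which $m \geq A' > 0$. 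With this $J'$ one has $\kappa_{J'} \sim I_0$ as $t_0 \to -1$, and the pointwise bound of stage~3 then gives a positive constant uniform in the location of $t_0$ and in $\alpha \in [\alpha_0,1)$.
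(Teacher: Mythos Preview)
Your approach is constructive and quite different from the paper's proof, which argues by contradiction and compactness: fixing $c_0,C_1,\mu$, one supposes a sequence $m_k$ (with possibly varying $\alpha_k$ and $f_k$) satisfies the hypotheses but $\inf_{[-1,0]} m_k\to 0$, extracts a $C^{0,\beta_1}$-convergent subsequence via Theorem~\ref{t:main}, and derives a contradiction in the limit (treating the cases $\alpha_k\to\alpha_1<1$ and $\alpha_k\to 1$ separately). That soft argument never needs to locate a subinterval $J$ abutting $-1$.

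Your stages 1--3 are set up correctly, and you correctly identify the obstacle: when $t_0\to -1^+$ the denominator $I_0=\int_{-\infty}^{-1}\K(t_0,s)\,ds$ blows up like $(t_0+1)^{-\alpha}$ while $\kappa_J$ stays bounded whenever the right endpoint of $J$ is strictly less than $-1$. The gap is in your proposed resolution. The H\"older iteration you describe produces intervals of lengths $\rho_k\sim (A\,2^{-k}/C_H)^{1/\beta}$, and hence the total distance advanced is
\[
\sum_{k\ge 1}\rho_k \;\le\; C_\beta\Bigl(\tfrac{A}{C_H}\Bigr)^{1/\beta},
\]
which tends to $0$ as $A=c_0\mu/C_\ast\to 0$. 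Since the point $\tau\in[-2,-1]$ where $m(\tau)\ge A$ can lie anywhere in $[-2,-1]$, for small $A$ the iteration cannot reach $-1$, and you are left without a $J'$ of the form $[b',-1]$. (Moreover, $C_H$ itself depends on $C_1$ through $\|c_0f-C_1m\|_{L^\infty}$, which only worsens the reach.) Consequently the lower bound $M^\ast\ge A\kappa_{J}/(2(C_1+I_0))$ degenerates precisely in the regime $t_0\to -1^+$, and your argument does not close.

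A secondary point: at $t_0$ you only have the constant $M^\ast$ touching $m$ from \emph{below}, so Proposition~\ref{p:classic2} yields the \emph{inequality} $\int_{-\infty}^{t_0}[M^\ast-m(s)]\K(t_0,s)\,ds\ge c_0f(t_0)-C_1M^\ast$, not the equality you wrote; fortunately only this direction is used. Also note that applying Theorem~\ref{t:divnondiv}/Corollary~\ref{c:divint} requires the right-hand side to be continuous and vanish at $t=-2$; you should state explicitly why $c_0f-C_1m$ satisfies this (in the paper's application $f$ is taken smooth with $f(-2)=0$, and $m(-2)=0$).
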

 
  \begin{proof}
   From Lemma \ref{l:main} any such solution $m$ is H\"older continuous. We claim that
   $m>0$ for 
   $t \in [-1,0]$. Suppose by way of contradiction that there exists $t_0 \in [-1,0]$
   such that $m(t_0)\leq 0$. Let $m$ achieve its minimum at $t_1$. Then $m$ is touched 
   from below at $t_1$ by the constant function $m(t_1)$, and so by Propositions \ref{p:classic}
    and \ref{p:classic2} we may evaluate $\D_t^{\alpha} m$ at $t_1$ and  
     \[
      \int_{-\infty}^{t_1} [m(t_1) - m(s)] \K(t,s) \ ds \geq f(t_1) \geq 0.
     \]
     But since $m(t_1)\leq 0$ and $m$ achieves a minimum at $t_1$ we have
      \[
      0 \geq      \int_{-\infty}^{t_1} [m(t_1) - m(s)] \K(t,s) \ ds.
      \]
     Then $m(t)=0$ for $t\leq t_1$. But then since $f(t)=0$ for $t\leq t_1$. But this contradicts
     the assumption \eqref{e:muf}. Then $m(t)>0$ for $-1\leq t \leq 0$. 
   
   Let us fix $c_0,C_1, \mu >0$. 
   Suppose by way of contradiction that there exists $m_k, \alpha_k, f_k$ all satisfying the assumptions of the Lemma, but 
    \[
     \inf_{[-1,0]} m_k \to 0 
    \]
    as $k \to \infty$. From Lemma \ref{l:main}, we have $m_k \to m_0$ in $C^{0,\beta_1}([-2,0])$ for any $\beta_1 < \beta$. 
    Then there exists $t_1 \in [-1,0]$ such that $m_0(t_1)=0$. 
    
    We now claim that $m_0(t_0)>0$ for some $t_0 \in [-2,-1]$. Suppose by way of contradiction that $m_0 \equiv 0$ in $[-2,-1]$. 
    From Corollary \ref{c:divint}, we have that
     \[
      \begin{aligned}
      C \Lambda \sup_{[-2,-1]} m_k &\geq 
      \frac{\Lambda}{\Gamma(1-\alpha_k)} \int_{-2}^{-1} \frac{m_k(t)}{(-1-t)^{\alpha}} \ dt \\
         &\geq \int_{-2}^{-1} c_0 f_k(t)- C_1 m_k \ dt\\
         &\geq c_0 \mu - \int_{-2}^{-1}C_1 m_k \ dt.
        \end{aligned}
     \] 
    Letting $k \to \infty$ we obtain that $0 \geq \mu$. Which is a contradiction, and therefore the claim that $m_0$ is not identically zero is true. 
    
     We now consider two different cases. First assume that for a subsequence $\alpha_k \to \alpha_1 <1$. 
     Let $t_2$ be the first point after $t_0$ such that $m_0(t_2)=0$. Because $m_k \to m_0$ uniformly, 
     and since $m_0(t_0)>0$, we may choose $\psi \geq 0$ smooth with $\psi(t)=0$ in a neighborhood of $t_2$ and $\psi(t_0)>0$ and also 
     satisfying $m_k \geq \psi$ for all $k$. Now $M_{\alpha}^+ \psi(t) \leq -\delta_1$ for $t \in [t_2 - \delta_2]$ and 
     $\alpha \in (\alpha_1 - \delta_3, \alpha + \delta_3)$. 
     We let $\e_k$ be such that $\psi + \e_k \leq m_k$ on $[t_2 -\delta_2,t_2]$ and $\psi(t_k) + \e_k = m_k(t_k)$
     for some $t_k \in [t_2 - \delta_2, t_2]$. We define  
      \[
       \phi_k := \begin{cases}
                       \psi + \e_k &\text{ if } t \geq t_2 - \delta_2 \\
                       m_k &\text{ if } t < t_2 - \delta_2. 
                      \end{cases}
      \] 
     Then
      \[
       M_{\alpha}^{+} \phi_k(t_k) \geq \D_t^{\alpha_k} \phi_k (t_k) \geq -C_1 m_k(t_k).
      \] 
    Since $m_0(t)>0$ for $t\in (t_2 - \delta_2, t_2)$, as $k \to \infty$, we have  $t_k \to t_2$ and $\e \to 0$. 
    Then 
      \[
        \lim_{k \to \infty} M_{\alpha_k}^{+} \phi_k(t_k) \geq \lim_{k \to \infty} - C_1 u_{k}(t_k) =0. 
      \]
    But we also have  
     \[
      \lim_{k \to \infty} M_{\alpha_k}^{+} \phi_k(t_k) \leq  
       M_{\alpha_1}^+ \psi(t_2)  \leq - \delta_1 <0.
     \]
    This is a contradiction to the first case. 
    
    We now consider the case in which $\alpha_k \to 1$. For a further subsequence there exists $f_0$
    such that $f_k$ converges to $f_0$ in weak star $L^{\infty}$, so that 
     \[
     \int_{-2}^{-1} f_0(t) \geq \mu. 
   \]    
   Then from Corollary \ref{c:divint} we have that
    \[
     \frac{\lambda\alpha_k}{\Gamma(1-\alpha_k)} \int_{-2}^{t} m_k(s) \ ds \leq 
     \int_{-2}^t f_k(s) - C_1m_k(s) \ ds 
     \leq  \frac{\Lambda\alpha_k}{\Gamma(1-\alpha_k)} \int_{-2}^{t} m_k(s) \ ds.    
    \]
    Since $m_k \to m_0$ uniformly and since \cite{d04} for any continuous function $h$ 
    we have
    \[
    \frac{\alpha_k}{\Gamma(1-\alpha_k)} \int_{-2}^t \frac{h(s)}{(t-s)^{\alpha_k}} \ ds \to h(t), 
    \]
    as $\alpha_k \to 1$,
    then we obtain as $k \to \infty$ the inequality 
     \[
      \lambda m_0(t)  \leq \int_{-2}^t f_0(s) - C_1 m_0(s)  \ ds \leq \Lambda m_0(t). 
     \]
    Then there exists $\lambda \leq g(t) \leq \Lambda $ such that 
     \[
      g(t)m_0(t) = \int_{-2}^t f_0(s) - C_1 m_0(s)  \ ds.
     \]
     Then $gm_0$ is Lipschitz continuous since $f_0 $ is bounded. Furthermore, we have that 
      \[
       g(t)m_0(t) \geq \int_{-2}^t f_0(s) - C_1 \lambda^{-1}g(s)m_0(s)  \ ds.
      \]
     Then $gm_0$ is a supersolution and from the theory of ordinary differential equations, $gm_0 \geq \tilde{g}$ with 
     $\tilde{g}$ solving
      \[
       \tilde{g}(t) = \int_{-2}^t f_0(s) - C_1 \lambda^{-1}\tilde{g}(s)  \ ds.
      \]
     Since $f_0$ is not identically zero, then $\tilde{g}>0$ on $[-1,0]$.  It follows that $gm_0>0$ and hence
     $m_0>0$ on $[-1,0]$ as well. This is a contradiction to the second case. 
     Then for fixed $c_0,C_1, \mu$, there exists $\delta_4>0$ depending on $c_0,C_1, \mu, \lambda, \Lambda, \alpha_0$
     such that for any solution satisfying the assumptions in the statement of the 
     lemma, we have that $u \geq \delta_4$  in $[-1,0]$. We then obtain a modulus of continuity as stated in the Lemma. 
   \end{proof}

\section{H\"older Continuity}  \label{s:holder2} 
  
 In this section we follow the method used in \cite{s11} to prove our main result. 
 We will need the following Lemma to account for the growth in the tails.  
    
  \begin{lemma}  \label{l:decrease}
   Let $u$ be a continuous function, $u\leq 1$ in $(\R^n \times [-2,0])\cup (B_2 \times [-\infty,0])$, which satisfies 
   the following inequality in the viscosity sense in $B_2 \times [-2,0]$
    \begin{equation}  \label{e:assume1}
     \D_t^{\alpha} u - M^+ u \leq \epsilon_0.
    \end{equation}
   with $\alpha_0 \leq \alpha <1$. 
   Assume also that  
    \begin{equation}  \label{e:assume2}
     | \{u\leq 0\} \cap (B_1 \times [-2,-1])| \geq \mu.
    \end{equation}
   Then if $\epsilon_0$ is small enough there exists $\theta>0$ such that $u \leq 1-\theta$ in $B_1 \times [-1,0]$. 
   The maximum value of $\epsilon_0$ as well as $\theta$ depend only on $\alpha_0, \lambda, \Lambda,n$ and $\sigma$.
  \end{lemma}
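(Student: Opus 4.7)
I will construct an explicit supersolution $\phi$ of $\D_t^\alpha\phi - M^+\phi \geq \epsilon_0$ inside $B_2\times[-2,0]$ satisfying $\phi \geq u$ on the parabolic exterior and $\phi \leq 1-\theta$ on $B_1\times[-1,0]$; the comparison principle (the parabolic analogue of Theorem~\ref{t:comparison}) will then yield $u \leq 1-\theta$ on $B_1\times[-1,0]$. The barrier will take the multiplicative form
\[
\phi(x,t) = 1 - m(t)\eta(x),
\]
where $\eta \geq 0$ is a $C^2$ spatial cutoff with $\eta \equiv 1$ on $B_1$ and $\mathrm{supp}\,\eta \subset B_{3/2}$ (so that $\|M^\pm\eta\|_\infty$ is controlled), and $m \geq 0$ is a temporal factor built from the ODE machinery of Section~\ref{s:barrier}.

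\textbf{Construction of the temporal factor.} Fubini applied to \eqref{e:assume2} yields $\int_{-2}^{-1} G(t)\,dt \geq \mu$ where $G(t) := |\{x\in B_1 : u(x,t)\leq 0\}|$. Using the continuous mollification $G(t,\epsilon) \leq G(t)$ from Section~\ref{s:barrier}, Lemma~\ref{l:timeexist} provides a viscosity solution $m$ of
\[
\D_t^\alpha m(t) = c_0 G(t,\epsilon) - C_1 m(t), \qquad m \equiv 0 \text{ for } t \leq -2,
\]
which Lemma~\ref{l:zorn} approximates from below by Lipschitz subsolutions (needed so that $m\eta$ is admissible as part of a viscosity test function). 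Lemma~\ref{l:mubound} then gives the quantitative lower bound $m(t) \geq \omega_0 > 0$ on $[-1,0]$, where $\omega_0$ depends on $c_0, C_1, \mu, \lambda, \Lambda, \alpha_0$. Consequently $\phi \equiv 1 \geq u$ on the parabolic exterior of $B_2\times[-2,0]$ (using $m\equiv 0$ for $t\leq -2$, $\eta\equiv 0$ outside $B_{3/2}$, and the hypothesis $u\leq 1$), and $\phi \leq 1-\omega_0$ on $B_1\times[-1,0]$.

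\textbf{Supersolution check and main obstacle.} Since $\eta$ is time-independent and $M^+(-m\eta) = -m\,M^-\eta$ for $m\geq 0$ by Proposition~\ref{p:basics}, the inequality $\D_t^\alpha\phi - M^+\phi \geq \epsilon_0$ reduces in $B_1$ to
\[
-c_0 G(t,\epsilon) + m(t)\bigl(C_1 + M^-\eta(x)\bigr) \geq \epsilon_0,
\]
with an analogous estimate in the transition annulus $B_{3/2}\setminus B_1$ absorbed via $\|M^\pm\eta\|_\infty$. Fixing first $C_1 \gg \|M^-\eta\|_\infty$, then $c_0$ small relative to $\mu, \omega_0, |B_1|$, and finally $\epsilon_0$ small relative to $c_0\omega_0$, closes the estimate on $[-1,0]$. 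The hard part is the subinterval $t\in[-2,-1]$, where $m(t)$ can be arbitrarily small while $G(t,\epsilon)$ can be as large as $|B_1|$, rendering the naive pointwise estimate negative. I expect to handle this by exploiting that at any contact point $(x_0,t_0)$ of $\phi$ and $u$ inside $B_1\times[-2,-1]$ one has $u(x_0,t_0) = 1 - m(t_0)\eta(x_0)$, which is positive whenever $m$ is small, so $(x_0,t_0) \notin \{u\leq 0\}$, and then the contribution of $\{u\leq 0\}$ at nearby times can be absorbed into the spatial $M^+$-integrand of $u$ via the subsolution inequality itself, rather than through the coarse $L^\infty$-bound $G \leq |B_1|$. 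A secondary subtlety is that the lower bound $\omega_0$ in Lemma~\ref{l:mubound} depends implicitly on $C_1$ through a compactness argument, which forces the constants to be chosen in the order indicated above.
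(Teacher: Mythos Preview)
Your overall architecture coincides with the paper's: the barrier has the product form $1-g(t)b(x)$ with $b$ a smooth spatial cutoff and $g$ a Lipschitz subsolution (from Lemma~\ref{l:zorn}) of the ODE $\D_t^\alpha m=c_0f-C_1m$ whose positivity on $[-1,0]$ comes from Lemma~\ref{l:mubound}. Where your proposal has a genuine gap is in the resolution of what you call the ``hard part.''

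You frame the argument as ``verify that $\phi=1-m\eta$ is a supersolution, then apply comparison.'' But as you yourself observe, $\phi$ is \emph{not} a pointwise supersolution of $\D_t^\alpha-M^+\geq\epsilon_0$: whenever $G(t):=|\{u(\cdot,t)\leq0\}\cap B_1|$ is large and $m(t)$ is small, the inequality fails. This is not confined to $t\in[-2,-1]$; it can happen at any $t$. The paper (following Silvestre) does not try to repair the supersolution property. Instead it argues directly at the contact point: set
\[
w(x,t)=u(x,t)+g(t)b(x)-\epsilon_0\lambda^{-1}c_\alpha(2+t)_+^\alpha,
\]
assume $\max w>1$, and evaluate the viscosity inequality for $u$ at the maximizer $(x_0,t_0)$. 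The crucial estimate is that when one computes $M^+v$ with $v=u$ outside a small ball $B_r(x_0)$, the set $G=\{x\in B_1:u(x,t_0)\leq0\}$ produces a \emph{negative} contribution of size $-c_0|G\setminus B_r|$ (because $u\leq0$ there while $u(x_0,t_0)$ is close to $1$). Letting $r\to0$, this exactly cancels the bad term $-c_0|G|\,b(x_0)$ coming from the ODE in $\D_t^\alpha v$, leaving
\[
0\geq c_0(1-b(x_0))|G|+\bigl(C_1b(x_0)+M^-b(x_0)\bigr)m(t_0),
\]
which is handled by a dichotomy on whether $b(x_0)\leq\beta_1$ (where $M^-b\geq0$) or $b(x_0)>\beta_1$ (where $C_1\beta_1$ dominates $|M^-b|$). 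Your heuristic ``absorb $\{u\leq0\}$ into the spatial $M^+$-integrand of $u$'' is exactly this mechanism, but your phrase ``at nearby times'' is a misdirection: the gain is purely spatial, at the fixed time $t_0$, and the value of $G(t_0)$ itself is what gets cancelled.

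Two further omissions: you are missing the correction term $\epsilon_0\lambda^{-1}c_\alpha(2+t)_+^\alpha$, which is what absorbs the right-hand side $\epsilon_0$ (your proposed absorption of $\epsilon_0$ into $c_0\omega_0$ cannot work on $[-2,-1]$ for the same reason the supersolution check fails there); and the paper does not invoke a parabolic comparison principle for $\D_t^\alpha-M^+$ (none is proved), relying instead on the direct maximum-point argument.
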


  \begin{proof}
   We first mention that it is sufficient to prove the Lemma under the assumption 
    \[
     | \{u< 0\} \cap (B_1 \times [-2,-1])| \geq \mu.
    \]
   For if $u$ satisfies \eqref{e:assume1} and \eqref{e:assume2}, then $u-c$ for any positive constant $c$ will satisfy \eqref{e:assume1} as well as the inequality above.
   Then $u-c \leq 1 -\theta$ in $B_1 \times [-1,0]$ independent of $c$ and so letting $c \to 0$ one obtains $u\leq 1 -\theta$. 
    
   We consider the fractional ordinary differential equation $m: (-\infty,0] \to \R$
    \begin{equation}  \label{e:fode}
     \begin{aligned}
      m(t)&=0 \quad &\text{ for } t\leq -2\\
      \D_t^{\alpha}  m(t) &= c_0 f(t) - C_1m(t) \quad &\text{ for } t>-2,
     \end{aligned}
    \end{equation}
   Where $f(t)$ is a smooth approximation from below of $|\{x \in B_1: u(x,t)<0\}|$. 
   From the hypothesis and Lemma \ref{l:mubound}, we can choose an approximation $f$ such that 
    \[
     m(t) \geq \omega_1(c_0\mu) \omega_2(C_1^{-1})/2 >0,
    \]
   for $t \in [-1,0]$. 
   By Lemma \ref{l:zorn} we can approximate $m$ uniformly from below by a Lipschitz function $g$ such that 
    \[
     \begin{aligned}
      g(-2)&=0 \\
      \D_t^{\alpha}  g(t) &\leq c_0 f(t) - C_1m(t),
     \end{aligned}    
    \]
   and 
    \[
     g(t) \geq \omega_1(c_0\mu) \omega_2(C_1^{-1})/4 >0,
    \]   
    for $t \in [-1,0]$. We utilize the function $g(t)$ which is not just a viscosity solution but also a classical solution 
    since it is Lipschitz. We can then calculate $\D_t^{\alpha} g$ everywhere classically. Furthermore, $g$ is allowed as a test function for 
    touching above or below for viscosity solutions.  
    
    We want to show that $u \leq 1- g(t) + \epsilon_0 c_{\alpha}2^{\alpha}$ if $c_0$ is small and $C_1$ is large. We can then set 
    $\theta = \omega_1(c_0\mu) \omega_2(C_1^{-1})/4$ for $\epsilon_0$ small to obtain the result of the Lemma. 
   We pick the constant $c_{\alpha}$ such that $\partial_t^{\alpha} c_{\alpha}(2+t)_+^{\alpha} = 1$ for $t>-2$, and note \cite{d04}
   that $c_{\alpha}$ is uniform as $\alpha \to 1$. 
   Let $\beta:\R \to \R$ be a fixed smooth nonincreasing function such that $\beta(x)=1$ if $x\leq 1$ and $\beta(x)=0$ if $x\geq 2$. 
   Let $b(x)= \beta(|x|)$. Where $b=0$ we have $M^- b>0$. Since $b$ is smooth $M^-b$ is continuous and it remains positive for $b$ small enough (\cite{s11}).
   Thus there exists $\beta_1$ such that $M^- b \geq 0$ if $b \leq \beta_1$.
   
   Assume that there exists some point $(x,t) \in B_1 \times [-1,0]$ such that 
    \[
     u(x,t) > 1 -g(t)+\epsilon_0 \lambda^{-1} c_{\alpha} (2+t)_+^{\alpha}. 
    \]
   We will arrive at a contradiction
   by looking at the maximum of the function
    \[
     w(x,t) =u(x,t) +g(t)b(x) - \epsilon_0 \lambda^{-1} c_{\alpha}(2+t)_+^{\alpha}
    \]
   We assume there exists a point in $B_1 \times [-1,0]$ where $w(x,t)>1$. Let $(x_0,t_0)$ be the point that realizes the maximum of $w$:
    \[
     w(x_0,t_0) = \max_{\R^n \times (-\infty,0]}w(x,t).
    \]  
   This maximum is larger than 1, and so it must be achieved when $t> -2$ and $|x|<2$. 
   
   Let $\phi(x,t):= w(x_0,t_0)-g(t)b(x)+\e_0 \lambda^{-1} c_{\alpha}(2+t)_+^{\alpha}$. We remark that $\phi(x,t)=\phi(x,-2)$ for 
   $t\leq -2$, and $\phi$ touches $u$ from above at the point $(x_0,t_0)$. 
   We define
    \[
     v(x,t) := 
              \begin{cases}
                 \phi(x,t) & \text{if } x \in B_r  \\ 
                    u(x,t) & \text{if } x \notin B_r.\\
    \end{cases}
    \] 
   Then from the definition of viscosity solution we have
    \begin{equation}  \label{e:viscos}
     \partial_t^{\alpha}v - M^+v \leq \epsilon_0 \quad \text{ at } \quad (x_0,t_0). 
    \end{equation}
   We have that 
    \[
     \begin{aligned}
         \D_t^{\alpha} v(x_0,t_0) &= \D_t^{\alpha} \left( -g(t_0)b(x_0) +\epsilon_0\lambda^{-1}c_{\alpha}(2+t)_+^{\alpha} \right)\\
                                            &\geq (C_1 m(t) - c_0 f(t))b(x_0) + \e_0 \lambda^{-1} M_{\alpha}^- (2+t)_+^{\alpha} \\
                                            &= (C_1 m(t) - c_0 f(t))b(x_0) + \e_0 \lambda^{-1} \lambda\partial_t^{\alpha} (2+t)_+^{\alpha} \\
                                            &=(C_1 m(t) - c_0f(t))b(x_0) + \e_0 \\                                            
                                            &\geq(C_1 m(t) - c_0|\{x \in B_1: u(x,t_0)<0\}|)b(x_0) + \e_0  \\
                                            &\geq (C_1 m(t) - c_0|\{x \in B_1: u(x,t_0)\leq 0\}|)b(x_0) + \e_0. \\                                            
     \end{aligned}
     \] 
   Then 
    \[
     \begin{aligned}
     \e_0 &\geq \D_t^{\alpha} v(x_0,t_0) - M^+v(x_0,t_0) \\
             &\geq (C_1 m(t) - c_0|\{x \in B_1: u(x,t_0)\leq 0\}|)b(x_0) + \e_0 - M^+v(x_0,t_0),
     \end{aligned}
    \]
    or 
    \begin{equation}  \label{e:timepart1}
     0 \geq (C_1 m(t) - c_0|\{x \in B_1: u(x,t_0)\leq 0\}|)b(x_0)  - M^+v(x_0,t_0).
    \end{equation}
   Now exactly as in \cite{s11} we obtain the following bound for $G := \{x \in B_1 \mid u(x,t_0)\leq 0 \}$,
    \begin{equation}  \label{e:silv}
     M^+ v(x_0,t_0) \leq -m(t_0) M^-b(x_0,t_0) -c_0|G \setminus B_r|
    \end{equation}
   for some universal constant $c_0$. This is how we choose $c_0$ in the fractional ordinary differntial equation. 
   We now look at two different cases and obtain a contradiction in both. Suppose $b(x_0)\leq\beta_1$. Then $M^-b(x_0)\geq0$, and so
   from \eqref{e:silv} 
    \[
     M^+v(x_0,t_0)\leq -c_0|G\setminus B_r|. 
    \]
   Combining the above inequality with \eqref{e:timepart1}, we obtain
    \[
     0 \geq \left(-c_0 |\{x \in B_1: u(x,t)\leq 0\}| + C_1m(t)\right)b(x_0) + c_0|G\setminus B_r|.
    \]
   For any $C_1>0$ this will be a contradiction by taking $r$ small enough. 
   
   Now suppose $b(x_0)>\beta_1$. Since $b$ is a smooth compactly supported function, there exists $C$ such that $|M^-b|\leq C$. We then have 
    from \eqref{e:silv} the bound
    \[
     M^+ v(x_0,t_0) \leq Cm(t_0) -c_0|G \setminus B_r|
    \]  
   and inserting this in \eqref{e:timepart1} we have
    \[
     0 \geq \left(-c_0 |\{x \in B_1: u(x,t)\leq 0\}| + C_1m(t)\right)b(x_0)  - Cm(t_0) +c_0|G \setminus B_r|
    \]
   Letting $r \to 0$ we obtain 
    \[
     \begin{aligned}
     0  &\geq c_0(1-b(x_0))|G| + (C_1b(x_0)-C)m(t_0) \\
        &\geq c_0(1-b(x_0))|G| + (C_1\beta_1-C)m(t_0).
     \end{aligned}
    \]
   Choosing $C_1$ large enough we obtain a contradiction. 
  \end{proof}

 We now define
  \[
   Q_r := B_r \times [-r^{2\sigma/\alpha},0],
  \]
 and note the rescaling property that if $v(x,t)=u(rx,r^{2\sigma/\alpha}t)$, then 
  \[
   \tilde{\D}_t^{\alpha}v(x,t) - M_{\sigma}^{\pm}v(x,t) = 
     r^{2\sigma} \left(\D_t^{\alpha}u(rx,r^{2\sigma/\alpha}t) - M_{\sigma}^{\pm}u(rx,r^{2\sigma/\alpha}t) \right)
  \]
  where if $\K(t,s)$ is the kernel for $\D_t^{\alpha}$,  then $\tilde{\D}_t^{\alpha}$ has kernel 
   \[
    \frac{\K(rt,rs)}{r^{1+\alpha}}
   \]
  which will also satisfy \eqref{e:timediv} and \eqref{e:alphabound}. 
  For the next three results we fix $r=\min\{4^{-1},4^{-\alpha/2\sigma}\}$.
  We will need the following Proposition to bound the tails. 
  \begin{proposition}  \label{p:nubound}
   Let $h(t)= \max{\{2|rt|^{\nu}-1,0\}}$ with $r= \min\{4^{-1},4^{-\alpha/2\sigma}\}$. 
   If $t_1\leq 0$ and $\nu <\alpha$ then 
    \[
     0 \geq \D_t^{\alpha} h(t_1) \geq - \lambda c_{\alpha, \nu}
    \]
   where $c_{\alpha,\nu}$ is a constant depending only on $\alpha$ and $\nu$ but for fixed $\nu$ remains uniform as $\alpha \to 1$.
  \end{proposition}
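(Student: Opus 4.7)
The plan is to handle the two inequalities separately. The upper bound is essentially immediate from the geometry of $h$: the function $h(t) = \max\{2|rt|^\nu - 1, 0\}$ is even in $t$ and nondecreasing in $|t|$. Since $s \leq t_1 \leq 0$ implies $|s| \geq |t_1|$, we have $h(s) \geq h(t_1)$, so the integrand in
\[
\D_t^\alpha h(t_1) = \int_{-\infty}^{t_1}[h(t_1) - h(s)]\K(t_1,s)\,ds
\]
is pointwise nonpositive, giving $\D_t^\alpha h(t_1) \leq 0$.

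For the lower bound I will produce two pointwise estimates of $h(s) - h(t_1)$, one good near $s = t_1$ and one good at $s = -\infty$, and split the integral at $s = t_1 - 1$. Near $t_1$ I use a Lipschitz estimate: on the positive region $\{|rt|^\nu \geq 1/2\}$ we have $|h'(t)| = 2\nu r^\nu|t|^{\nu-1}$, which, for $\nu \in (0,1)$, is maximized at the inner boundary $|t| = (1/2)^{1/\nu}/r$, where it equals a constant $L_\nu$ depending only on $\nu$. Combined with $h \equiv 0$ on the complementary region and continuity across the interface, this yields the global bound $h(s) - h(t_1) \leq L_\nu(t_1 - s)$. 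Far from $t_1$ this linear bound is not integrable against $(t_1 - s)^{-1-\alpha}$, so I use subadditivity of $x \mapsto x^\nu$ on $[0,\infty)$, namely $(a+u)^\nu \leq a^\nu + u^\nu$ with $a = |t_1|$ and $u = t_1 - s$. A short case analysis (both $s, t_1$ in the positive region, or $t_1$ in the zero region where $2r^\nu|t_1|^\nu \leq 1$) gives in all cases the uniform estimate $h(s) - h(t_1) \leq 2r^\nu(t_1 - s)^\nu$ valid for all $s \leq t_1 \leq 0$.

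Combining these bounds with the upper ellipticity assumption \eqref{e:alphabound} on $\K$ produces
\[
-\D_t^\alpha h(t_1) \leq \Lambda\,\frac{\alpha}{\Gamma(1-\alpha)}\left[L_\nu\int_0^1 u^{-\alpha}\,du + 2r^\nu\int_1^\infty u^{\nu-1-\alpha}\,du\right] = \Lambda\,\frac{\alpha}{\Gamma(1-\alpha)}\left[\frac{L_\nu}{1-\alpha} + \frac{2r^\nu}{\alpha - \nu}\right].
\]
The hypothesis $\nu < \alpha$ is exactly what makes the far-field integral converge, and the factor $2r^\nu/(\alpha - \nu)$ remains bounded for $\nu < \alpha_0 < \alpha$. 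Uniformity as $\alpha \to 1$ follows from $\Gamma(1-\alpha) \sim 1/(1-\alpha)$, so that $\alpha/\Gamma(1-\alpha) \sim \alpha(1-\alpha)$ cancels the $1/(1-\alpha)$ blowup of the near-field integral. The main obstacle I expect is verifying that the Lipschitz constant of $h$ really is independent of $r$ (and $t_1$); this reduces to the observation that the maximum of $|h'|$ on the positive region occurs precisely at the inner boundary, where the $r$-dependence cancels out of $2\nu r^\nu|t|^{\nu-1}$ after substituting $|t| = (1/2)^{1/\nu}/r$.
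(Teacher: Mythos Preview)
Your argument is correct and more self-contained than the paper's. The paper's proof is a two-liner: since $h(t_1)-h(s)\le 0$ for all $s\le t_1\le 0$, one has $\D_t^\alpha h(t_1)\ge M_\alpha^- h(t_1)=\Lambda\,\partial_t^\alpha h(t_1)$ (the paper writes $\lambda$ here, which appears to be a typo), and then the lower bound on the standard Caputo derivative $\partial_t^\alpha h(t_1)$ is simply quoted from \cite{a16}. Your splitting of the integral at $t_1-1$, with a Lipschitz bound near $t_1$ and the subadditive bound $|s|^\nu\le |t_1|^\nu+(t_1-s)^\nu$ at infinity, is essentially what underlies that cited estimate, so you are unpacking the reference rather than taking a genuinely different path; the payoff is that your version is self-contained and makes the uniformity as $\alpha\to 1$ visible.

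One small correction to your final remark: the $r$-dependence does \emph{not} cancel in the Lipschitz constant. Substituting $|t|=(1/2)^{1/\nu}/r$ into $2\nu r^\nu|t|^{\nu-1}$ gives $2\nu r\,(1/2)^{(\nu-1)/\nu}$, which still carries a factor of $r$. This is harmless because $r\le 1/4$, so $L_\nu$ can indeed be bounded by a constant depending only on $\nu$; but the claim as stated is not accurate and should be rephrased.
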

 
  \begin{proof}
   Now 
    \[
     0 \geq \D_t^{\alpha} h(t_1) \geq M_{\alpha}^- h(t_1) = \lambda \partial_t^{\alpha} h(t_1). 
    \]
   From \cite{a17} we have
    \[
     \partial_t^{\alpha} h(t_1) \geq c_{\alpha, \nu},  
    \]
   which for fixed $\nu$ remains uniform as $\alpha \to 1$. Combining the above two inequalities, the Proposition is proven. 
  \end{proof}

 \begin{lemma}  \label{l:down}
  Let $u$ be a bounded continuous function which satisfies the following two inequalities in the viscosity sense in $Q_1$
   \begin{equation}   \label{e:pucci}
    \begin{aligned}
     &\D_t^{\alpha} u - M^+ u \leq \epsilon_0/2, \\
     &\D_t^{\alpha} u - M^- u \geq -\epsilon_0/2.
    \end{aligned}
   \end{equation}
  Let the kernel $\K(t,s)$ of $\D_t^{\alpha}$ satisfy \eqref{e:timediv} and \eqref{e:alphabound} with $0<\alpha_0\leq \alpha <1$.  
  Then there are univeral constants $\theta>0$ and $\nu>0$ depending only on $n,\sigma,\Lambda,\lambda,\alpha_0$
  such that if 
   \[
    \begin{aligned}
     |u| \leq 1                     &\quad \text{in} \quad B_1 \times [-1,0] \\
     |u(x,t)| \leq 2|rx|^{\nu}-1 &\quad \text{in} \quad (\R^n\setminus B_1) \times [-1,0] \\
     |u(x,t)| \leq 2|rt|^{\nu}-1 &\quad \text{in} \quad B_1 \times (-\infty,-1], 
    \end{aligned}   
   \]
  with $r=\min\{4^{-1},4^{-\alpha/2\sigma}\}$, then 
   \[
    \text{osc}_{Q_{r}} u \leq 1-\theta
   \]
 \end{lemma}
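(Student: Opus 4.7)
The plan is to reduce Lemma \ref{l:down} to Lemma \ref{l:decrease} via a sign symmetrization, a parabolic rescaling, and a controlled truncation that absorbs the sub-algebraic tails into the right-hand side of the Pucci inequalities.

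First, the two Pucci inequalities in \eqref{e:pucci} and the tail growth bounds on $|u|$ are preserved by $u\mapsto -u$, so I may assume, after possibly replacing $u$ with $-u$, that
\[
|\{u \leq 0\}\cap (B_1\times [-1, -r^{2\sigma/\alpha}])| \geq \tfrac{1}{2}|B_1 \times [-1, -r^{2\sigma/\alpha}]|,
\]
since at least one of $\{u\leq 0\}$ or $\{u\geq 0\}$ occupies half the measure of this set. It then suffices to prove $u \leq 1-\theta$ on $Q_r$, because the lower bound $u \geq -1$ on $Q_r$ supplies the remaining oscillation control after a harmless rescaling of $\theta$.

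Second, I would rescale by setting $v(x,t):= u(rx, r^{2\sigma/\alpha}t)$. The scaling identity stated just before the lemma shows that $v$ satisfies the same two Pucci inequalities on $B_{1/r}\times[-r^{-2\sigma/\alpha}, 0]$, with the right-hand sides multiplied by $r^{2\sigma}\leq 1$, and with a rescaled kernel for $\tilde{\D}_t^{\alpha}$ still in the class \eqref{e:alphabound}--\eqref{e:timediv}. Since $r\leq 1/4$, the rescaled cylinder contains $B_2\times[-2,0]$, so $|v|\leq 1$ there, and the spatial and temporal tails of $u$ transfer to analogous sub-algebraic bounds on $v$.

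Third, and this is the heart of the argument, I would apply Lemma \ref{l:decrease} to $\bar v := \min(v,1)$, which equals $v$ on $B_2\times[-2,0]$ and satisfies $\bar v \leq 1$ globally. The measure condition carries over from Step 1, and it suffices to check that $\bar v$ satisfies the Pucci inequality $\tilde{\D}_t^{\alpha} \bar v - M^+_\sigma \bar v \leq \epsilon_0$ in the viscosity sense on $B_2\times[-2,0]$. The truncation alters $v$ only far away in space or deep in the past; the spatial contribution of $(v-1)_+$ to $M^+_\sigma$ at any base point in $B_2\times[-2,0]$ is bounded by
\[
C\int_{|y|\gtrsim 1/r}\frac{(r|y|)^{\nu}}{|y|^{n+2\sigma}}\,dy \leq C\, r^{2\sigma},
\]
provided $\nu < 2\sigma$, while the temporal discrepancy is controlled by Proposition \ref{p:nubound}. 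Choosing $\nu$ sufficiently small, and using that $r \leq 1/4$, forces both perturbations below $\epsilon_0/2$, and this is the main obstacle of the proof: making the truncation argument fully rigorous in the viscosity setting, which likely requires replaying the barrier construction of Lemma \ref{l:decrease} with the explicit tail terms carried along rather than invoking its statement as a black box.

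Finally, Lemma \ref{l:decrease} applied to $\bar v$ yields $\bar v \leq 1-\theta$ on $B_1\times[-1,0]$, hence $u\leq 1-\theta$ on $Q_r$ after undoing the rescaling. Combined with $u\geq -1$ on $Q_r$, this furnishes the required oscillation decay.
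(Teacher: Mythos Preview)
Your approach is essentially the paper's: rescale so that $Q_r$ becomes $Q_1$, choose the sign so that $\tilde u\leq 0$ on half of $B_1\times[-2,-1]$ in the rescaled variables, truncate to $v=\min(\tilde u,1)$, bound the spatial tail error exactly as in \cite{s11} and the temporal one via Proposition~\ref{p:nubound} so that $\tilde{\D}_t^{\alpha} v - M^+ v \leq \epsilon_0$, and invoke Lemma~\ref{l:decrease}. The only slips are cosmetic: the sign dichotomy should be taken on the region that rescales to $B_1\times[-2,-1]$ (i.e.\ on $u$ over $B_r\times[-2r^{2\sigma/\alpha},-r^{2\sigma/\alpha}]$) rather than on $B_1\times[-1,-r^{2\sigma/\alpha}]$ as you wrote, and the viscosity-level truncation you flag as a potential obstacle is treated as routine in the paper by deferring to \cite{s11}.
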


 \begin{proof}
  We consider the rescaled version 
   \[
    \tilde{u}(x,t) := u(r^{-1}x,r^{-2\sigma/\alpha} t). 
   \]
  The function $\tilde{u}$ will stay either positive or negative in half of the points in $B_1 \times [-2,-1]$. Let us assume that 
   $\{\tilde{u} \leq 0\}\cap (B_1 \times [-2,-1])\geq |B_1|/2$. Otherwise we can repeat the proof for $-\tilde{u}$. We would like to apply Lemma \ref{l:decrease}.
   To do so we would need $\tilde{u}\leq 1$. We consider $v:= \min\{1,\tilde{u}\}$. Inside $Q_{r^{-1}}$ we have $v=\tilde{u}$. The error comes only from the tails in 
   the computations. Exactly as in \cite{s11} we obtain for $\kappa$ small enough
    \[
     -M^+ v \leq -M^+ \tilde{u} + \epsilon_0/4.
    \]
   From Proposition \ref{p:nubound} we have for small enough $\kappa$ that
    \[
     \D_t^{\alpha} v \leq \D_t^{\alpha} \tilde{u} + \epsilon_0/4.
    \]
   Thus 
    \[
     \D_t^{\alpha} v - M^+ v \leq \epsilon_0.
    \]
   We now apply Lemma \ref{l:decrease} to $v$ and rescale back to conclude the proof. 
 \end{proof}

We are now able to give the proof of our main result. 
%
%
%
 \begin{proof}[Theorem \ref{t:main2}]
  We first choose $\kappa<\nu$ for $\nu$ as in Lemma \ref{l:down}
  Let $(x_0,t_0) \in Q_1$. We consider the rescaled function 
   \[
    v(x,t) = \frac{u(x_0 + x,t+t_0}{\| u\|_{L^{\infty}} + \epsilon_0^{-1} \| f\|_{L^{\infty}}},
   \]
  and note that $|v|\leq 1$ and $v$ is a solution to 
   \[
    \begin{aligned} 
     \D_t^{\alpha} v - M_{\sigma}^+v &\leq \e_0 \\
     \D_t^{\alpha} v - M_{\sigma}^-v &\geq -\e_0.
    \end{aligned}
   \]
  in $B_2 \times [-1,0]$. We let $r=\min\{4^{-1},4^{-\alpha/2\sigma}\}$ and the estimate will follow as soon as we show
   \begin{equation}  \label{e:osc2}
    \text{osc}_{Q_{r_k}} v \leq 2 r^{\kappa k}.
   \end{equation}
  Estimate \eqref{e:osc2} will be proven by constructing two sequences $a_k \leq v \leq b_k$ in $Q_{r_k}$, $b_k - a_k= 2r^{\kappa k}$ with 
  $a_k$ nondecreasing and $b_k$ nonincreasing. The sequence is constructed inductively. 
  
  Since $|v|\leq 1$ everywhere, we can start by choosing some $a_0 \leq \inf v$ and $b_0 \geq \sup v$ so that $b_0 - a_0=2$. Assuming now that
  the sequences have been constructed up to the value $k$  we scale
   \[
    w(x,t) = (v(r^k x,r^{2k\sigma /\alpha}t) -(a_k+b_k)/2)r^{-\kappa k}.
   \] 
  We then have 
   \[
    \begin{aligned}
     |w| \leq 1                 &\quad \text{in } \quad  Q_1 \\
     |w| \leq 2r^{-\kappa k} -1 &\quad \text{in } \quad  Q_{r^{-k}}.
    \end{aligned}
   \]
  and so
   \[
    \begin{aligned}
     |w(x,t)|\leq 2|x|^{\nu}-1  &\quad \text{for } \quad (x,t) \in B_1^c \times [-1,0] \\
     |w(x,t)|\leq 2|t|^{\nu}-1  &\quad \text{for } \quad (x,t) \in B_1 \times (-\infty,-1).
    \end{aligned}
   \]
  Notice also that $w$ has new right hand side bounded by 
   \[
    \epsilon_0 r^{k(\kappa-2\sigma)}
   \]
  which is strictly smaller than $\epsilon_0$ for $\kappa<2\sigma$. 
   For $\kappa$ small enough we can apply Lemma \ref{l:down} to obtain 
   \[
    \text{osc}_{ Q_r}  w \leq 1-\theta. 
   \]
  Then if $\kappa$ is chosen smaller than the $\kappa$
  in Lemma \ref{l:down} and also so that $1-\theta \leq r^{\kappa}$, then this implies
   \[
    \text{osc}_{ Q_{r^{k+1}}} w \leq r^{\kappa(k+1)}
   \]
  so we can find $a_{k+1}$ and $b_{k+1}$ and this finishes the proof.  
 \end{proof}

\bibliographystyle{amsplain}
\bibliography{refmarchaud}

\end{document}